\definecolor{mygreen}{RGB}{10,100,10}
\newtheoremstyle{reduced_space}{}{-0.5\baselineskip}{}{}{\bfseries}{}{.5em}{}
\theoremstyle{reduced_space}
\newtheorem{thm}{Theorem}[section]
\newtheorem{prp}[thm]{Proposition}
\newtheorem{lmm}[thm]{Lemma}
\newtheorem{crl}[thm]{Corollary}
\newtheorem{fact}[thm]{Fact}
\theoremstyle{definition}
\newtheorem{dfn}[thm]{Definition}
\newtheorem{eg}[thm]{Example}
\theoremstyle{remark}
\newtheorem{rmk}[thm]{Remark}
\def\eref#1{(\ref{#1})}
\def\ov#1{\overline{#1}}
\def\wt#1{\widetilde{#1}}
\def\tl#1{\tilde{#1}}
\def\tn#1{\textnormal{#1}}
\def\ch#1{\check{#1}}
\def\wh#1{\widehat{#1}}
\def\lra{\longrightarrow}
\def\xra#1{\xrightarrow{{#1}}}
\def\sx{\!\times\!}
\def\cA{\mathcal A}
\def\ccM#1{\ov{C}_{#1}(M,\i)}
\def\ccE#1{\ov{C}_{#1}(\pi)}
\def\ccEt{{\ov{C}_2(\pi)}}
\def\ccEft{{\ov{C}^{E(\Ga)}_2(\pi)}}
\def\ccEp#1{\ov{C}_{#1}(\pi')}
\def\ccEpp#1{\ov{C}_{#1}(\pi'')}
\def\tS{\wt{S}}
\def\tcM{X_{\Ga}}
\def\rxg{{\mathring{X}_\Ga}}
\def\cq{{\ov{C}_2(\pi)/\!\!\sim_F}}
\def\cqp#1{{\ov{C}_2(\pi#1)/\!\!\sim_{F#1}}}
\def\Sp{{S^{d-1}_\pi}}
\def\vgsa#1{{\ov{V}^{#1}_{\!A}}}
\def\com{{q^*\ch\om}}
\def\um{{\underline{m}}}
\def\ul{{\underline{l}}}
\def\pr{{\tn{pr}}}
\def\cG{\mathcal G}
\def\cJ{\mathcal J}
\def\cN{\mathcal N}
\def\R{\mathbb R}
\def\cS{\mathcal S}
\def\cT{\mathcal T}
\def\cU{\mathcal U}
\def\cZ{\mathcal Z}
\def\Z{\mathbb Z}
\def\al{\alpha}
\def\de{\delta}
\def\ep{\epsilon}
\def\ga{\gamma}
\def\io{\iota}
\def\la{\lambda}
\def\si{\sigma}
\def\om{\omega}
\def\De{\Delta}
\def\Ga{\Gamma}
\def\Om{\Omega}
\def\Th{\Theta}
\def\fG{\mathfrak G}
\def\dim{\tn{dim}}
\def\id{\tn{id}}
\def\Diff{\tn{Diff}}
\def\Homeo{\tn{Homeo}}
\def\eqodd{\sim_{\tn{odd}}}
\def\eqeven{\sim_{\tn{even}}}
\def\odd{\tn{odd}}
\def\even{\tn{even}}
\def\sgn{\tn{sgn}}
\def\vertex{\tn{vertex}}
\def\edge{\tn{edge}}
\def\Au{\tn{Aut}^u}
\def\rsa{\mathring{\cS}^\Ga_A}
\def\rrsa{\ringring{\cS}^\Ga_A}
\def\rrrsa{\ringringring{\cS}^\Ga_A}
\def\quo{{\textnormal{quo}}}
\def\i{\infty}
\def\eset{\emptyset}
\def\prt{\partial}
\def\bu{\bullet}
\newcommand\ringring[1]{%
  {
   \mathop{\kern0pt #1}\limits^{
     \vbox to-1.85ex{
       \kern-2ex 
       \hbox to 0pt{\hss\normalfont\kern.1em \r{}\kern-.45em \r{}\hss}%
       \vss 
     }
   }
  }
}
\newcommand\ringringring[1]{%
  {
   \mathop{\kern0pt #1}\limits^{
     \vbox to-1.85ex{
       \kern-2ex 
       \hbox to 0pt{\hss\normalfont\kern0.05em \r{}\kern-.5em\r{}\kern-.5em\r{}\hss}%
       \vss 
     }
   }
  }
}
\begin{document}

\title{Kontsevich's Characteristic Classes as Topological Invariants of Configuration Space Bundles}
\author{Xujia Chen}
\date{\today}

\maketitle

\begin{abstract}
Kontsevich's characteristic classes are invariants of framed smooth fiber bundles with homology sphere fibers. It was shown by Watanabe that they can be used to distinguish smooth $S^4$-bundles that are all trivial as topological fiber bundles. 
In this article we show that this ability of Kontsevich's classes 
is a manifestation of the following principle: the ``real blow-up'' construction on a smooth manifold essentially depends on its smooth structure and thus, given a smooth manifold (or smooth fiber bundle) $M$, the topological invariants of spaces constructed from $M$ by real blow-ups could potentially differentiate smooth structures on $M$. The main theorem says that Kontsevich's characteristic classes of a smooth framed bundle $\pi$ are determined by the topology of the 2-point configuration space bundle of $\pi$ and framing data. 
\end{abstract}
\tableofcontents
\setlength{\parskip}{\baselineskip}
\setlength{\parindent}{0cm}

\section{Introduction}
\label{intro_sec}

Given the following data: 
\begin{itemize}
\vspace{-.6cm}
\itemsep0em 
\item a smooth fiber bundle $E\xra{\pi}B$ whose fibers are homology spheres, 
\item a smooth section $s_\i\!:\!B\to E$ and a trivialization $t$ of $\pi$ in a neighborhood $U$ of $s_\i(B)$,
\item a vertical framing $F$ on $E-s_\i(B)$ which is standard (i.e.~looks like the standard framing on $\R^n$ near $\i$) with respect to $t$ in $U$;
\end{itemize}
\vspace{-.6cm}
Kontsevich's characteristic classes are a collection of cohomology classes in $H^*(B;\R)$, parameterized by some combinatorial data (``graph homology''). They were introduced by Kontsevich \cite{Kontsevich} and have been exploit by various authors thereafter; see \cite{Lescop} for a good introduction. In \cite{Watanabe}, Watanabe constructed smooth (trivialized near a section and framed, in the above sense) $S^4$-bundles with non-trivial Kontsevich's characteristic classes, implying that as smooth fiber bundles (with fixed trivialization near a section) they are non-trivial, while as topological fiber bundles they are trivial. 

We would like to understand why Kontsevich's characteristic classes are able to differentiate smooth fiber bundles that are topologically the same. These classes are constructed by considering the ``configuration space bundles'' associated to $E\xra{\pi}B$, which are obtained by doing a sequence of real (oriented) blow-up operations fiberwise, and then doing some sort of intersection in the total space of the configuration space bundle to get an intersection number. Since intersection theoretical invariants usually do not depend on the smooth structure, while the real blow-up operations do, it is plausible that different smooth structures on the original bundle $E\xra{\pi}B$ yield different topological structures on the induced configuration space bundles, and Kontsevich's characteristic classes only depend on the topological bundle structure of the configuration space bundles (together with some information from the framing). The purpose of the present article is to make this statement precise and to give a detailed proof. The main theorem \ref{main_thm} says that the topological information from the 2-point configuration space bundle of $E\xra{\pi}B$, together with framing, determines Kontsevich's characteristic classes of $E\xra{\pi}B$. 

\begin{rmk}
The homotopy type of the real oriented blow up of a manifold $X$ along a submanifold $Y$, $Bl_YX$, does not depend on the smooth structure on $X$, since it is just $X\!-\!Y$; but the topological structure of $\prt Bl_YX$ as a sphere bundle over $Y$ and how $\prt Bl_YX$ is attached to $X\!-\!Y$ do depend on the smooth structure in an essential way; and a framing on the normal bundle of $Y$ can be used to capture this structure. 
\end{rmk}

For example, the simplest Kontsevich's characteristic classes, when it is a number ($\Th$-graph invariants), can be viewed as the triple intersection number of a cohomology class (called ``propagator class'' in Section \ref{propagator_sec}) in a space $\ov{C}_2(\pi)/\!\!\sim_F$, where $\ov{C}_2(\pi)$ is the total space of the 2-point configuration space bundle associated to $E\xra{\pi}B$, and $\sim_F$ is an equivalence relation on $\prt^v\ov{C}_2(\pi)$ (the vertical boundary of $\ov{C}_2(\pi)$) -- it uses the framing data $F$ to ``pinch'' $\prt^v\ov{C}_2(\pi)$, making it lower-dimensional. 
\footnote{This idea of constructing $\cq$ and viewing the propagator as a cohomology class in it has already been explored in the early work of Kuperberg and Thurston \cite{KT}.}

\subsection{Statement of main result}\label{thm_sec}

Throughout Sections 1-5, let $M$ be a closed smooth $d$-dimensional manifold whose $R$-homology groups are the same as that of the $d$-sphere, where $R=\Z$ or $\R$, and let $\i$ be a fixed point in $M$. Let $\ccM{2}$ be the configuration space of 2 ordered, not equal and not $\i$, points in $M$, compactified by the Fulton-MacPherson compactification.
More precisely, $\ov{C}_2(M,\i)$ is obtained from $M\sx M$ by first blowing up $\i\sx\i$ and then blowing up the strict transforms of $\i\sx M$, $M\sx\i$ and the diagonal $\De$. (All the blow-ups we use are oriented.) Denote by $f_+,f_-:\ccM{2}\to M$ the two forgetful maps lifting
$$M^2
\lra M, \quad f_+(x_1,x_2)=x_2,\quad f_-(x_1,x_2)=x_1,$$
respectively. So, $(f_-,f_+):\ccM{2}\to M\sx M$ is the blow down map. 

Denote by $\Diff_+(M)$ (resp. $\Homeo_+(M)$) the group of orientation-preserving diffeomorphisms (resp. homeomorphisms) of $M$, with the Whitney (resp. compact-open) topology, and 
define 
\begin{gather*}
\Diff_+(M,N_\i):=\big\{g\in\Diff_+(M)\big|\ \exists \tn{ neighborhood } U \ni\i \tn{ such that } g|_U=\id\big\}\\
\Homeo_+(M,N_\i):=\big\{g\in\Homeo_+(M)\big|\ \exists \tn{ neighborhood } U \ni\i \tn{ such that } g|_U=\id\big\}\\
\cG:=\big\{(\tl{g},g)\in\Homeo(\ccM{2})\sx\Homeo_+(M,N_\i)\big|\ g\circ f_\pm=f_\pm\circ\tl g\}.
\end{gather*}
By a smooth $(M,\i)$-bundle $E\xra{\pi}B$ we mean a fiber bundle with typical fiber $(M,\i)$ and structure group $\Diff_+(M,N_\i)$. It has a canonical section $s_\i$ and a canonical germ of trivializations 
\[
\begin{tikzcd}
t:B\sx U \ar[rr,"\sim"] \ar[rd,"\tn{projection to } B"'] & & \wt{U} \ar[ld,"\pi"] \\
& B &
\end{tikzcd}
\]
of some neighborhood $\wt U\supset s_\i(B)$ ($U$ is some neighborhood of $\i$ in $M$). Abusing notation we also denote the image $s_\i(B)$ by $s_\i$. 

A framing $F$ on a smooth $(M,\i)$-bundle $E\xra{\pi}B$ is a continuous choice of basis for the vertical tangent space at every point of $E\!-\!s_\i$,
such that there are neighborhoods $E\supset\wt U\supset s_\i$, $M\supset U\supset\i$, a diffeomorphism $(U,\i)\approx((\R^d-0)\sqcup\i,\i)$, satisfying that $t^*(F|_{\wt U})$ is the standard framing on $\R^d$ under this diffeomorphism. 

Let $\ccE{2}\to B$ be the associated $\ccM{2}$-bundle, $f_+,f_-:\ccE{2}\to E$ the two forgetful maps and $\prt^v\ccE{2}$ consists of boundaries of every fiber. So 
$$\prt^v\ccE{2}=(f_-,f_+)^{-1}\big(\De(\pi)\cup s_\i\!\!\times_{\!B}\!E\cup E\!\times_{\!B}\!s_\i\big),$$
where $\times_B$ denotes fiber product over $B$ and $\De(\pi)\subset E\!\times_{\!B}\!E$ is the fiberwise diagonal.
The framing $F$ induces a map $(f_-,f_+)^{-1}(\De(\pi))\to S^{d-1}$ which at each point $x\in E\!-\!s_\i$ maps $(f_-,f_+)^{-1}(x,x)=S\cN^v_{\De(\pi)}E|_x\approx ST^v_xE$ to $S^{d-1}$ using $F:T^v_xE\approx \R^d$; here $ST^vE$ denotes the sphere bundle of the vertical tangent bundle of $E$ and $S\cN^v_{\De(\pi)}E$ denotes the sphere bundle of the vertical normal bundle of $\De(\pi)$ in $E$. 
Using the trivialization $t$, this map can be extended to a map $\prt^v\ccE{2}\to S^{d-1}$ that we still denote by $F$, abusing notation; see for example \cite[\S2.3]{WatanabeAdd} for the detailed definition. 

Kontsevich's invariants, as cohomology classes in $B$ parameterized by 
graph homology, are defined for smooth framed $(M,\i)$-bundles with smooth base $B$ and $R=\R$.
In this article we need to restrict to the case of having only trivalent graphs\footnote{
The general definition of Kontsevich's invariants don't need to assume this, but for us, because of a nuanced technicality (more precisely, we need the very last sentence of Section \ref{graph_sec} to hold, which is needed in the proof of Lemma \ref{cancel_lmm}), the proof of Theorem \ref{main_thm} only works if all the vertices of the graph have the same valency. Since it is customary to consider trivalent graphs, we will just assume that.
}, 
so, throughout this article, Kontsevich's invariant for a bundle $\pi:E\to B$ is a map
$$\{\tn{formal sum of trivalent graphs, closed in graph homology}\}\lra H^*(B;R).$$

One of the by-products of this article is to extend Kontsevich's invariants to the case where $B$ is any paracompact Hausdorff space and $R=\Z$. 
The main theorem is

\begin{thm}\label{main_thm}
Let $E'\xra{\pi'}B'$, $E''\xra{\pi''}B''$ be smooth $(M,\i)$-fiber bundles and $s'_\i,s''_\i$ their canonical sections. Let $F',F''$ be framings on $\pi',\pi''$, respectively. If there exist \emph{continuous} maps 
$$\tl h: \ccEp{2}\lra\ccEpp{2},\ h:E'\lra E'',\ h_B:B'\lra B'',\ h_S:S^{d-1}\xrightarrow{\tn{homeomorphism}
} S^{d-1}$$
such that 
the following diagrams commute 
\[
\begin{tikzcd}
\ccEp{2}\rar["\tl h"]\dar["f'_+",shift left=1]\dar["f'_-"',shift right=1] & \ccEpp{2}\dar["f''_+",shift left=1]\dar["f''_-"',shift right=1] \\
E'\rar["h"]\dar["\pi'"] & E''\dar["\pi''"] \\
B'\uar[bend left, "s'_\i",pos=0.45] \rar["h_B"] & B''\uar[bend left,"s''_\i",pos=0.45]
\end{tikzcd}
\qquad
\begin{tikzcd}
\prt^v\ccEp{2}\rar["\tl{h}|_{\small \prt^v\ccEp{2}}"]\dar["F'"] &[20pt] \prt^v\ccEpp{2}\dar["F''"] \\
S^{d-1}\rar["h_S"] & S^{d-1}
\end{tikzcd}
\]
and for every point $b\in B$, $\tl{h},h$ restrict to orientation-preserving homeomorphisms on the fibers over $b$, 
then the Kontsevich's characteristic classes of $\pi''$ pull back to those of $\pi'$ by $h_B$. 
\end{thm}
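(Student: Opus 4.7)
The overall strategy is to rewrite each Kontsevich characteristic class as a topological intersection in a bundle $X_\Ga \to B$ built entirely from $\ccEt$ and the framing $F$, at which point Theorem \ref{main_thm} will follow by functoriality. The $\Theta$-graph case already noted in the introduction is the template: Kontsevich's $\Theta$-class is the triple cup product of a single propagator class in $\cq$, and it transports under any continuous $\bar h:\cqp{'}\to\cqp{''}$ that respects the forgetful maps $f_\pm$ and intertwines the framings. The hypotheses of the theorem supply exactly such a $\bar h$, as the descent of $\tl h$ to the framing quotient: because $\tl h|_{\prt^v\ccEp{2}}$ fits into the required square with $h_S$, the relation $\sim_{F'}$ is carried into $\sim_{F''}$.

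For a general admissible graph $\Ga$ I would construct $X_\Ga\to B$ as the fiber product of $|E(\Ga)|$ copies of $\ccEt$ glued over $E$ along the incidence pattern of $\Ga$: one copy of $\ccEt$ per edge, with the endpoints of all edges meeting at a vertex $v$ identified via $f_\pm$ as the same point of $E$. The vertical-boundary locus is collapsed edgewise by $\sim_F$. A fiber-dimension count $2d|E(\Ga)|-d(2|E(\Ga)|-|V(\Ga)|)=d|V(\Ga)|$ matches that of $\ccE{|V(\Ga)|}$, which is encouraging. The resulting $X_\Ga$ is manifestly functorial in the pair $(\ccEt,F)$, and carries $|E(\Ga)|$ tautological maps to $\cq$ from which one pulls back propagator classes $\com$. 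The topological Kontsevich class associated to $\Ga$ is then defined as the fiberwise pushforward to $B$ of the cup product of these edge-labelled propagator pullbacks, weighted by the combinatorial signs built into graph homology.

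To match this with the classical definition, I would produce an assembly map $\ccE{|V(\Ga)|}\to X_\Ga$ induced edgewise by the forgetful maps $\phi_e:\ccE{|V(\Ga)|}\to\ccEt$ followed by the quotient $\ccEt\to\cq$, and show that its top-degree pushforward sends the fiberwise fundamental class to that of $X_\Ga$ up to prescribed signs. This identification recovers the standard Kontsevich integral while phrasing it in terms of cup products and bundle-theoretic pushforwards rather than differential forms; as a bonus it extends the construction to arbitrary paracompact Hausdorff $B$ and to $R=\Z$, as promised in the introduction. Granting the identification, Theorem \ref{main_thm} follows by naturality: the hypotheses produce a continuous map $X_\Ga^{\pi'}\to X_\Ga^{\pi''}$ covering $h_B$ and intertwining the tautological maps to $\cq$, so propagator pullbacks and their cup products transport automatically.

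The main obstacle will be the assembly-map step. Strata of $\ccE{|V(\Ga)|}$ where three or more configuration points collide have no direct analogue in $\ccEt$, and one must show that they are collapsed to subsets of $X_\Ga$ of codimension at least one so as to contribute nothing to the top-degree pushforward. This is the topological shadow of the familiar fact that hidden-face contributions cancel in Kontsevich's integral, and I expect a careful stratified analysis of the Fulton-MacPherson boundary of $\ccE{|V(\Ga)|}$, combined with a dimension count for the collapses induced by $\sim_F$ at the vertex-gluings, to be the central technical work underlying the whole proof.
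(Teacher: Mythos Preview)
Your overall strategy is exactly right, and matches the paper's: reconstruct Kontsevich's classes as a cup product of propagator classes on a space built from $\ccEt$ and $F$ alone, then invoke functoriality. The descent of $\tl h$ to $\cq$ via the second commutative square is also correct, and is precisely how the paper obtains $\tl h_q$ in Proposition~\ref{naturalclass_prp}.

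The gap is in your construction of $X_\Ga$. Your fiber product of edge-copies of $\ccEt$ over $E$ along the incidence pattern is essentially the paper's $\ccM{\Ga}$ (the closure of the image of $f_\Ga$ in $\ccEt^{E(\Ga)}$), not the paper's $X_\Ga$. This space is a manifold with boundary, and its codimension-one boundary faces are not all handled by your proposed mechanism. You correctly note that type~1 faces (where $\Ga_A$ has a low-valence vertex) collapse for dimension reasons, and type~3 faces (all vertices of $\Ga_A$ trivalent, or $\i\in A$) are dealt with by $\sim_F$ plus a dimension count. But the type~2 faces (bivalent vertex in $\Ga_A$) and type~4 faces (two vertices joined by one edge) remain genuinely codimension~one after collapsing by $\sim_F$: the framing quotient only kills the $S^{d-1}$ direction of a single edge-factor, which does not drop the dimension of these strata. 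Saying they are ``codimension at least one'' is vacuous for boundary faces, and is not enough for a fundamental class or a well-defined pushforward.

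The paper's key device, which your proposal is missing, is to replace $\ccM{\Ga}$ by the union
\[
X_\Ga = \bigcup_{\si\in\tS_{E(\Ga)}}\phi(\si)\big(\ccM{\Ga}\big)\subset\ccM{2}^{E(\Ga)},
\]
where $\tS_{E(\Ga)}$ acts by permuting edge-factors and applying the swap $\tau$. This makes the type~2 and type~4 faces of the various translates coincide and cancel (Lemma~\ref{cancel_lmm}): for type~2 via Kontsevich's involution $\si_A$, for type~4 via the pairing coming from $[\de_{\bar d}\Ga]=0$. After removing a codimension-two set $T_2$, the result is a ``manifold with boundary and bindings'' whose only genuine boundary is the type-3 locus $S$, and one obtains a well-defined class in $H^{d|V(\Ga)|}(X_\Ga,S)$ together with a map $\rho$ to $R$. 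Your assembly map $\ccE{|V(\Ga)|}\to X_\Ga$ and the comparison with the classical definition both go through once this union construction is in place (this is the content of Definition~\ref{Xtl_dfn} and Section~\ref{equiv_sec}), but not before.
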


\begin{rmk}\label{Gfr_rmk}
Without the commutativity condition of the diagram on the right, the assumption of this theorem is the same as saying $(\tl h,h)$ is a $\cG$-bundle map. 
See Section \ref{rmk_sec} for some discussions about the condition in this theorem. In particular, Theorem \ref{restatement_thm} is a restatement and extension of Theorem \ref{main_thm}. 
\end{rmk} 

\begin{rmk}
Theorem 1.4 in \cite{LinXie} (which appeared a few months after the present paper) implies Theorem \ref{main_thm}, at least in the case $M$ is a sphere. 
See Theorem \ref{restatement_thm}---a restatement of Theorem \ref{main_thm} (and the second to last paragraph above it, as well as Proposition \ref{frame_prp}), for why this is the case. 
\end{rmk}


\subsection{Outline of the proof}\label{outline_sec}

We re-construct Kontsevich's characteristic classes in a way that all the definitions are made using only the topological bundle structure on $\ov{C}_2(\pi),\pi$ and the maps $f_\pm$, avoiding using the smooth structure in definitions. This will make Theorem \ref{main_thm} automatic. Sections 2-4 are devoted to this re-construction, which is really just a translation of the original construction. In Section \ref{equiv_sec} we show that the new definition is equivalent to the original one. 

To make such a re-construction, the natural strategy is to translate the original construction using differential forms into the language of some topological cohomology theory---here we use \v{C}ech cochains---and thus avoid using the smooth structure. However, the cochains in all topological cohomology theories are rather cumbersome to work with, so we need to find the appropriate spaces so that we actually work with cohomology classes. The seemingly unmotivated definitions in Section \ref{space_sec} are for this purpose. 
This approach is very similar to the work of Kuperberg and Thurston \cite{KT} (and some later works, e.g. \cite{Koytcheff}), but with some major differences; see Remark \ref{KTK_rmk} below. 

We describe in a bit more detail how the re-construction is done below. 
First, in this paragraph, we briefly recall how the original construction roughly goes. 
Let $E\xra{\pi}B$ be a smooth $(M,\i)$-bundle with smooth, compact base $B$, and $F$ a framing on $\pi$. 
Let $\Ga$ be a trivalent graph that is closed in graph homology\footnote{
Throughout this paper we work with a single graph $\Ga$ instead of a formal sum of graphs, but this is just for simplicity. All the arguments can be modified to work if we assume $\Ga$ is a formal sum of graphs instead; see Remark \ref{graphsum_rmk}.
}; denote its vertex set and edge set by $V(\Ga),E(\Ga)$, respectively. Denote by $\ov{C}_{V(\Ga)}(\pi)$ the Fulton-MacPherson compactification of 
$$C_{V(\Ga)}(\pi):=\big\{(x_v\in E)_{v\in V(\Ga)}\big|\,x_v\notin s_\i,\pi(x_v)=\pi(x_w),x_v\neq x_w,\forall\,v\neq w\in V(\Ga)\big\}.$$
It is a manifold with boundaries and corners. For every edge $e$ of $\Ga$, there is a forgetful map $f_e\!:\!\ov{C}_{V(\Ga)}(\pi)\to \ov{C}_2(\pi)$ forgetting everything but the two points labeled by the vertices adjacent to $e$. Take a closed $(d\!-\!1)$-form $\om$ (called {\it propagator}) on $\ov{C}_2(\pi)$ satisfying $\om|_{\prt^v\ov{C}_2(\pi)}=F^*\tn{vol}$ for some form $\tn{vol}$ on $S^{d-1}$ such that $\int_{S^{d-1}}\tn{vol}=1$. 
Then the desired characteristic class (with parameter $\Ga$) is defined to be the class represented by the push-forward of $\bigwedge_ef_e^*\om_e$ to $B$. This pushed-forward form is not automatically closed or independent of the choice of $\om$; all the trouble here is that $\ov{C}_{V(\Ga)}(\pi)$ has boundary. The codimension-1 boundary strata of $\ov{C}_{V(\Ga)}(\pi)$ are in correspondence with subsets $A\subset\{\i\}\sqcup V(\Ga)$ having at least 2 elements. Denote by $\ov{\cS}_A$ the closed boundary stratum corresponding to $A$; it represents the configurations where the points with labels in $A$ all coincide and ``bubble off to a screen''. These boundary strata are divided into 4 types, and treated separately. 
\begin{enumerate}
\vspace{-.5cm}
\itemsep0em 
\item[1)] The subgraph $\Ga_A$ of $\Ga$ spanned by vertices in $A$ has a zero- or univalent vertex, and is not of the form as in the 4-th type below. Then $\ov{\cS}_A$ is contracted by the map to $B$ and thus does not contribute.
\item[2)] $\Ga_A$ has a bivalent vertex, then there is an involution on $\ov{\cS}_A$, making the contribution from $\ov{\cS}_A$ cancel with itself. 
\item[3)] $\i\in A$ or $A=V(\Ga)$. This case relies on the framing $F$, the trivialization of $\pi$ near $s_\i$, and that $\om$ is defined to be compatible with $F$. A dimension count shows that $\ov{\cS}_A$ does not contribute either. 
\item[4)] $\Ga_A$ has two vertices connected by an edge, then these $\ov{\cS}_A$ correspond to boundary terms of $\Ga$ in graph cohomology. Since $\Ga$ is closed, their total contribution is 0. 
\end{enumerate}

In our re-construction, we no longer use differential form propagators; instead, define the space $\cq$ obtained from $\ccE{2}$ by contracting $\prt^v\ccE{2}$ to $S^{d-1}$ using the framing $F$, and the propagator can be naturally replaced by a {\it propagator class} $\Om\in H^{d-1}(\cq)$. Denote by $q:\ccE{2}\to\cq$ the quotient map. 

Our construction also treats the boundary strata of $\ccE{V(\Ga)}$ type by type; each type is treated in the same spirit as the original arguments. 
\begin{enumerate}
\vspace{-.5cm}
\itemsep0em 
\item[1)] Instead of using $\ov{C}_{V(\Ga)}(\pi)$, we use a ``weaker'' compactification of $C_{V(\Ga)}(\pi)$, denoted by $\ov{C}_\Ga(\pi)$. 
When marked points coincide, $\ov{C}_{V(\Ga)}(\pi)$ records the collapsing directions of every pair of points, as well as the relative collapsing speed of each triple of points. 
But $\ov{C}_\Ga(\pi)$ only records the collapsing directions of pairs of points whose labels in $V(\Ga)$ are connected by an edge of $\Ga$; $\ov{C}_\Ga(\pi)$ also does not record the relative collapsing speed of triples of points. 
Indeed, $\ccE{\Ga}$ is defined to be the closure of the image of
$$C_{V(\Ga)}(\pi)\xrightarrow{(f_e)_{e\in E(\Ga)}}\ccE{2}^{E(\Ga)}.$$ 
We denote the image of each $\ov{\cS}_A$ in $\ccE{\Ga}$ by $\ov{\cS}^\Ga_A$. 
Those $\ov\cS_A\subset\ccM{V(\Ga)}$ for $A$ of type 1 are ``contracted'', i.e., $\ov{\cS}^\Ga_A\subset\ccE{\Ga}$ has codimension 2 or higher. 

\item[2\&4)] The original arguments in these two cases tell us that each type 2 $\ov{\cS}_A$ has an involution and type 4 $\ov{\cS}_A$'s can be paired-up and cancel each other. Morally, we want to define a new space obtained from $\ccE{\Ga}$ by gluing each type 2 $\ov{\cS}^\Ga_A$ to itself by the involution, and glue each type 4 pair $\ov{\cS}^\Ga_{A_1},\ov{\cS}^\Ga_{A_2}$ to each other, thus they are no longer boundaries. However, the gluing procedure involves much technicality; so we instead just use the space 
$$X_\Ga(\pi):=\bigcup_{\si\in\tS_{E(\Ga)}}\si\cdot\ov{C}_\Ga(\pi)\subset\ov{C}_2(\pi)^{E(\Ga)},$$
where $\tS_{E(\Ga)}$ is a slight generalization of the permutation group of the set $E(\Ga)$, which acts on $\ccE{2}^{E(\Ga)}$ by permuting the factors. 
Taking the union of all the translates of $\ov{C}_\Ga(\pi)$ by $\wt{S}_{E(\Ga)}$ makes the type 2 and type 4 boundary strata of them coincide and cancel with each other. 
We will explain in Section \ref{XGa_sec} that after removing a codimension 2 subset $T_2(\pi)$ from $X_\Ga(\pi)$, it is (fiberwise) a ``manifold with boundary and bindings'' (see Figure \ref{binding_fig} for what binding means), where the pages adjacent to a binding, when counted with sign, sum up to 0. Since $X_\Ga(\pi)$ is a subspace of $\ov{C}_2(\pi)^{E(\Ga)}$, for each edge $e$ of $\Ga$, we still have the forgetful map $f_e:X_\Ga(\pi)\to\ccE{2}$. 

\begin{figure}[h]
\centering
\begin{minipage}{.35\textwidth}
\centering
\begin{tikzpicture}
\draw [thick] (0,0) -- (0,2);
\draw (0,2) -- (-1.5,2) -- (-1.5,0);
\draw [name path=l2, color=lightgray] (-1.5,0) -- (0,0);
\draw (0,0) -- (1,-1.5);
\draw [name path=l4] (1,-1.5) -- (1,0.5);
\draw (1,0.5) --(0,2);
\draw (0,2) -- (-1, .5);
\draw [name path=l3] (-1, .5) -- (-1,-1.5);
\draw (-1,-1.5) -- (0,0);
\draw (0,2) -- (1.3,2.5) -- (1.3,.5);
\draw [name path=l1, color=lightgray](1.3,.5) -- (0,0);
\path [name intersections={of=l2 and l3,by=E1}];
\draw (-1.5,0) -- (E1);
\path [name intersections={of=l1 and l4,by=E2}];
\draw (1.3,.5) -- (E2);
\end{tikzpicture}
\caption[caption]{\tabular[t]{@{}l@{}}A binding with \\ 4 adjacent pages\endtabular}
\label{binding_fig}
\end{minipage}%
\begin{minipage}{.7\textwidth}
\centering
\includegraphics[width=.8\textwidth]{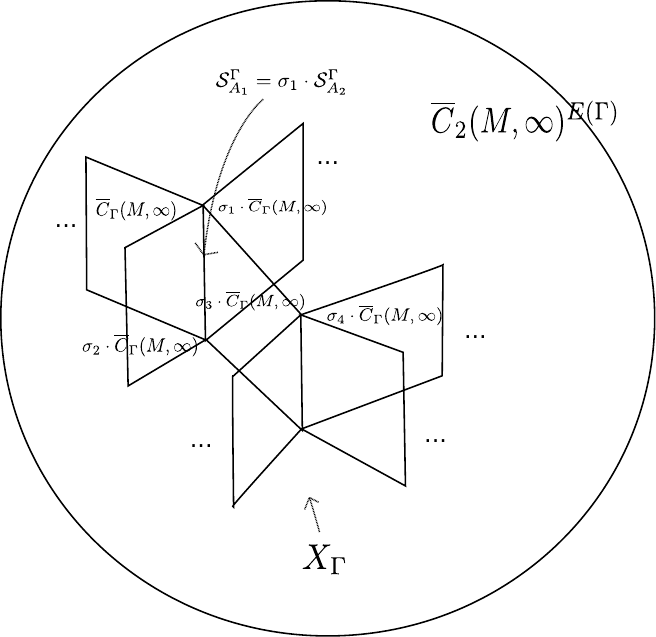}
\caption{An illustration of $X_\Ga$}
\label{X_fig}
\end{minipage}
\end{figure}

\item[3)] Denote by $S(\pi)\subset X_\Ga$ the union of all type 3 $\ov{\cS}^\Ga_A$. Then the boundary of $X_\Ga(\pi)-T_2(\pi)$ is contained in $S(\pi)$. We show that the cup product
$\Om_\Ga:=\cup_{e\in E(\Ga)}f_e^*q^*\Om$
actually lands in the relative group $H^{*}(X_\Ga(\pi),S(\pi))$. 
\end{enumerate}
Since $X_\Ga(\pi)$ can be roughly thought of as a manifold with boundary $S(\pi)$ and bindings with pages summing up to 0, $H^*(X_\Ga(\pi),S(\pi))$ is not trivial and $\Om_\Ga$ contains all the information we need. We push it forward to $B$ (cohomology push-forward is defined in Section \ref{cohomologypushforward_sec} using Leray-Serre spectral sequence) to obtain a class in $H^*(B)$, which is the desired Kontsevich's characteristic class with parameter $\Ga$. 

\begin{rmk}\label{KTK_rmk}
Our approach above to modify $\ccE{V(\Ga)}$ is very similar to that of \cite{KT} and \cite{Koytcheff}.
The similarities include: considering the space $\cq$ and view the propagator as a cohomology class of it; 
using a smaller configuration space to deal with boundary strata of type (1); and using a gluing construction to deal with boundary strata of type (2) and (4). 
The main difference is about the construction of the ``smaller configuration space''. 
In \cite{KT,Koytcheff}, 
it is constructed by blowing up less diagonals in $M^{V(\Ga)}$ (denote by $M$ the 3-manifold whose configuration space is to be constructed), thus the ``smaller configuration space'' obtained is a smooth manifold, and when doing the gluing, how the corners glue together nicely is analyzed. 
Here, we construct the ``smaller configuration space'' in a much simpler way (the closure of the image of the forgetful maps), and define the glued space $X_\Ga$ in a very simple way as well. 
As a trade-off, the downside of this approach is that these spaces are not smooth, therefore we have to make an effort to analyze that the not-necessarily-smooth part is of at least codimension-2. 
Yet it is because the definition of $X_\Ga(\pi)$ is so simple in our approach---essentially, only the topology of $\ov{C}_2(\pi)$ is used---that we are able to prove Theorem \ref{main_thm} which only involves the 2-point configuration space instead of the $n$-point configuration spaces for bigger $n$. 
\end{rmk}

\subsection{Some auxiliary notation}
For a set $S$ let $|S|$ denote the number of elements in $S$. 
For a set $I$ and a space $X$, denote $X^I=\prod_{i\in I}X$; for $n\in\Z^{>0}$, denote $X^n=\underbrace{X\sx X\ldots\sx X}_{\tn{n-times}}$.  Denote by $\De_{\tn{big}}\subset X^I$ the big diagonal. For a map $f:X\to Y$, denote 
$$f^I:X^I\to Y^I,\qquad f^I\big((x_i)_{i\in I}\big)=(f(y_i))_{i\in I}.$$
By the real blow-up of a smooth manifold $X$ along a submanifold $Y$ we mean the oriented blow-up: replacing $Y$ by the sphere normal bundle of $Y$ in $X$. 

We will be using \v{C}ech cochians and \v{C}ech cohomology. For a space $X$ with an open cover $\cU$, and a coefficient ring $R$, we write $\ch{C}^*_{\cU}(X;R)$ for the $R$-module of \v{C}ech cochains on $X$ with respect to $\cU$, valued in the constant sheaf $R\times X$. 
By a skew-symmetric \v{C}ech cochain $\al\in\ch{C}^n_{\cU}(X;R)$ we mean $\al$ is such that $$\al(U_0,\ldots,U_i,U_{i+1},\ldots,U_n)=-\al(U_0,\ldots,U_{i+1},U_i,\ldots,U_n)$$
for all $U_0,\ldots,U_n\in\cU$ and $0\le i<n$. 
If $A\subset X$ is closed, we denote $\ch{C}^*_{\cU}(X,A;R)=\ch{C}^*_{\cU}(X;R|_{X-A})$, where $R|_{X-A}$ is the sheaf obtained by restricting $R\times X$ to $X-A$ and then extending it by 0 to $X$. 
So, $\ch{C}^*_{\cU}(X,A;R)$ can be identified with a sub-module of  $\ch{C}^*_{\cU}(X;R)$: it consists of \v{C}ech cochains that vanish on $A$.
Denote by $H^*_{\cU}(X;R)$ and $H^*_{\cU}(X,A;R)$ the cohomology of $\ch{C}^*_{\cU}(X;R),\ch{C}^*_{\cU}(X,A;R)$, respectively. 

If $\cU'$ is a refinement of $\cU$, then there is a well-defined map $H^*_{\cU}(X;R)\to H^*_{\cU'}(X;R)$ (resp. $H^*_{\cU}(X,Y;R)\to H^*_{\cU'}(X,Y;R)$), independent of the choice of refinements. 
Write $H^*(X;R)$ (resp. $H^*(X,Y;R)$) for the cohomology of $X$ (resp. the pair $(X,Y)$) with $R$ coefficients; it is the direct limit of $H^*_{\cU}(X;R)$ (resp. $H^*_{\cU}(X,Y;R)$) as $\cU$ gets more and more refined. 
In Sections \ref{space_sec} and \ref{propagator_sec}, the coefficient ring $R$ is assumed to be $\Z$ or $\R$, and is often suppressed from notation; in Section \ref{equiv_sec}, $R=\R$. 

We will also be using compactly-supported \v{C}ech cohomology (and very occasionally \v{C}ech cohomology with some other support $\Phi$; usually $\Phi$ is the collection of compact subsets inside a subspace). 
The last two paragraphs hold in these cases as well. The notation in the compactly-supported case will be an added subscript ``$c$'', e.g., $\ch{C}^*_{c,\cU}(X;R)$, $H^*_{c,\cU}(X;R)$, $H^*_{c}(X;R)$. (And the notation in the case with support $\Phi$ will be an added subscript ``$\Phi$''.)

We follow \cite[\S2.4]{Steenrod} for the definition of fiber bundles. When we say $p:B\to X$ is a fiber bundle with fiber $Y$ and structure group $G$, we mean a fiber bundle as in \cite[Definition 2.3]{Steenrod}, with given (maximal) coordinate functions that we do not explicitly mention. Given a $G$-action on some other space $Y'$ (resp. and a $G$-equivariant map $f\!:Y'\to Y$), by {\it the associated bundle of $p$ with fiber $Y'$} we mean a fiber bundle $p':B'\to X$ with fiber $Y'$ and structure group $G$ (resp. and a $G$-bundle map (see \cite[Definition 2.5]{Steenrod} for definition) $\tilde{f}:B'\to B,p\circ\tilde{f}=p'$) built up by gluing coordinate charts in the same way as $p$. Then $p'$ (resp. $(p',\tilde{f})$) is unique up to equivalence.

In Section \ref{thm_sec} we defined a smooth $(M,\i)$-bundle to be a fiber bundle with typical fiber $(M,\i)$ and structure group $\Diff_+(M,N_\i)$. If $B$ is a smooth manifold, then it is the same as saying (by smooth approxiamation theorems we can make the transition maps 
smooth) 
$E\xra{\pi}B$ is a smooth submersion between smooth manifolds, with a smooth section $s_\i\!:\!B\to E$, such that for all $b\in B$, $(\pi^{-1}(b),s_\i(b))$ is diffeomorphic to $(M,\i)$, together with
a neighborhood $\wt U\subset E$ of $s_\i(B)$ and a trivialization $t\!:B\sx (U,\i)\xra{\sim}(\wt U,s_\i)$ where $U\subset M$ is some neighborhood of $\i$. 

Throughout this article we assume the reader is familiar with the Fulton-MacPherson compactification -- having the picture in their mind; see \cite{FM} or \cite{Lescop} for reference. 
Some familiarity with the original definition of Kontsevich's characteristic classes would help (see any one of  \cite{Kontsevich}, \cite{Lescop}, \cite[Section 2]{WatanabeAdd} for reference), but it will not be needed until Section \ref{equiv_sec}. 

\subsection{Acknowledgments}
I thank Peter Kronheimer for many very helpful discussions throughout the course of the work. I thank Fabian Gundlach for helpful discussions regarding Example \ref{eg} and for providing the proof of Lemma \ref{lattice_lmm}. 
I would also like to thank the anonymous referees who carefully read this paper and made many valuable comments and suggestions.

\section{Information from the graph}
\label{graph_sec}

Say a graph is {\it directed} if its edges have directions, {\it ordered} if both of its vertex set and edge set are ordered. If a vertex or edge is the $i$-th one in the ordering, we call $i$ its {\it label}. Given such a graph $\Ga$, we denote by $V(\Ga)$ its vertex set and $E(\Ga)$ its edge set.
Denote by $e^\Ga_{i}$ the $i$-th edge of $\Ga$, $v^\Ga_{i}$ the $i$-th vertex of $\Ga$. Conversely, given an edge $e$ or vertex $v$ of $\Ga$, we denote by $o_\Ga(e),o_\Ga(v)$ its label. (So $o_\Ga(e^\Ga_{i})=o_\Ga(v^\Ga_{i})=i$.) For an edge $e$ of $\Ga$, denote by $v_+(e),v_-(e)$ the output and input vertex connected to $e$, respectively. 

Suppose $\Ga_1,\Ga_2$ are directed, ordered graphs, $\al:\Ga_1\to\Ga_2$ an undirected, unordered graph isomorphism, then we denote by $\sgn(\al,\vertex),\sgn(\al,\edge)\in\{+1,-1\}$ the permutation signs of $\al$ on the set of vertices and edges, respectively, and denote \begin{gather*}\sgn(\al,\to)=(-1)^{\tn{number of edges whose direction is reversed by }\al}.\\
\sgn_d(\al):=\begin{cases}\sgn(\al,\edge),\quad \tn{ for }d\in\Z \tn{ even};\\
\sgn(\al,\vertex)+\sgn(\al,\to),\quad \tn{ for }d\in\Z \tn{ odd}. 
\end{cases}
\end{gather*}
For a graph $\Ga$, define $\Au(\Ga)$ to be the group of automorphisms of $\Ga$ as an undirected, unlabeled graph. Define
$$|\Au(\Ga)|^\pm_d=\sum_{\al\in\Au(\Ga)}\sgn_d(\al)\in\Z.$$

For a set $I$, denote by $\tS_I$ the group of bijections from $\sqcup_{i\in I}\{i^+,i^-\}$ to itself satisfying that, if $i^+$ is mapped to $j^+$ or $j^-$, then $i^-$ is also mapped to $j^+$ or $j^-$. There is an obvious map from $\tS_I$ to $S_I$, the symmetry group of $I$ as a set. For $\si\in\tS_I$, we denote by $\sgn(\si)$ the sign of its image in $S_I$; denote $\sgn'(\si):=(-1)^{|\{i\in I|\si(i^+)=i^-\}|}$. 

Given an element $\al\in\Au(\Ga)$, we denote by $\al_V\in S_{V(\Ga)}$ and $\al_E\in S_{E(\Ga)}$ the permutations of the vertices and edges induced by $\alpha$, respectively.  
Define 
\begin{gather*}
\psi_\Ga:\Au(\Ga)\lra\tS_{E(\Ga)},\qquad
\psi_\Ga(\al)(e^+)=\al_E(e)^\pm\ \tn{ if }\ \al_V(v_+(e))=\al_V(v_\pm(e)).
\end{gather*}
Intuitively, one can think of $e^\pm$ as the two half-edges of $e$. 

If $\Ga$ has no isolated vertex, this is an injective group homomorphism. 

\subsection{Quick review of graph homology}
\label{graph_homology_sec}

Let $\fG$ be the free abelian group generated by directed, ordered graphs that are also non-empty, connected and such that every vertex is at least trivalent. Define two equivalence relations on $\fG$, $\eqodd$ and $\eqeven$, generated by the following:
for $\Ga_1,\Ga_2$ directed, ordered graphs, if there exists an isomorphism $\phi:\Ga_1\to\Ga_2$ as unoriented, unordered graphs, then 
$$\Ga_1\eqodd\big(\sgn(\phi,\vertex)\cdot\sgn(\phi,\to)\big)\,\Ga_2,\quad \Ga_1\eqeven\sgn(\phi,\edge)\,\Ga_2.$$

For a directed, ordered graph $\Ga$ and an edge $e$ of $\Ga$, we define $\Ga/e$ to be the graph obtained from $\Ga$ by contracting $e$, 
with edge directions unchanged, vertices ordered in the same way as $\Ga$, except for the new vertex, 
now put in the very front; edges ordered in the same way as $\Ga$; as below. 
\vspace{-.7cm}
\begin{center}
\begin{tikzpicture}[scale=.6]
\draw [-stealth] (0,0) -- (.5,.5);
\draw (.5,.5) -- (1,1);
\draw [-stealth] (0,0) -- (-.5,0);
\draw (-.5,0) -- (-1,0);
\draw (0,0) -- (-.1,-.5);
\draw [stealth-] (-.1,-.5) -- (-.2,-1);
\draw [-stealth] (1,1) -- (.9,1.5);
\draw (.9,1.5) -- (.8,2);
\draw (1,1) -- (1.5,1);
\draw [stealth-] (1.5,1) -- (2,1);
\node at (.3,.6) {$e$};
\node at (0.8,2.2) {...};
\node at (2.3,1) {...};
\node at (-1.3,0) {...};
\node at (-.2,-1.1) {...};

\draw [|->] (2.5,0.5) -- (3.3,0.5);

\draw [-stealth] (5.2,0.5) -- (4.7,0.5);
\draw (4.7,0.5) -- (4.2,0.5); 
\draw [-stealth] (5,-.5) -- (5.1,0);
\draw (5.1,0) -- (5.2,.5);
\draw [-stealth] (5.2,.5) -- (5.1,1);
\draw (5.1,1) -- (5,1.5);
\draw [-stealth] (6.2,.5) -- (5.7,.5);
\draw (5.7,.5) -- (5.2,.5);
\node at (3.9,.5) {...};
\node at (5,-.6) {...};
\node at (5,1.6) {...};
\node at (6.5,.5) {...};
\end{tikzpicture}
\end{center}

Define $\Z$-linear maps $$\de_{\odd}:\fG/\!\eqodd\lra\fG/\!\eqodd,\qquad \de_{\even}:\fG/\!\eqeven\lra\fG/\!\eqeven$$ 
induced by: for a directed, ordered graph $\Ga$, 
$$\de_{\odd}(\Ga):=\sum_{e\in E(\Ga)}(-1)^{o_\Ga(v_+(e))-o_\Ga(v_-(e))}\,\Ga/e,\qquad \de_{\even}(\Ga):=\sum_{e\in E(\Ga)}(-1)^{o_\Ga(e)}\,\Ga/e.$$
It can easily be seen that $\de_\odd^2,\de_\even^2=0$. Graph homology is defined to be the homology of $(\fG/\!\eqodd,\de_\odd)$ and $(\fG/\!\eqeven,\de_\even)$. 

The main takeaway we need is the following statement, which follows directly from definition: if $\Ga_1+\ldots+\Ga_n\in\fG, [\de_\odd(\sum_{i=1}^n\Ga_i)]_{\eqodd}\!=0$ or $[\de_\even(\sum_{i=1}^n\Ga_i)]_{\eqeven}\!=0$ (``[\ ]'' denotes taking equivalence class with respect to $\eqodd,\eqeven$), respectively, then there exists an (often not unique) pairing between the edges of $\Ga_1,\ldots,\Ga_n$, such that, if $e_{\Ga_a,i},e_{\Ga_b,j}$ is a pair, then there is an undirected, unordered graph isomorphism 
$$\alpha:\Ga_a/e^{\Ga_a}_i\lra\Ga_b/e^{\Ga_b}_{j}$$
such that 
$$\sgn(\al,\vertex)\cdot\sgn(\al,\to)\cdot(-1)^{o_{\Ga_a}(v_+(e^{\Ga_a}_i))-o_{\Ga_a}(v_-(e^{\Ga_a}_i))+o_{\Ga_b}(v_+(e^{\Ga_b}_{j}))-o_{\Ga_b}(v_-(e^{\Ga_b}_{j}))}=-1$$
or 
$$\sgn(\al,\edge)\cdot(-1)^{o_{\Ga_a}(e^{\Ga_a}_i)+o_{\Ga_b}(e^{\Ga_b}_{j})}=-1,$$
respectively.

\subsection{Information from the graph}
\label{fixagraph_sec}

Let $d\ge3$ be an integer. Denote by $\bar d\in\{\odd, \even\}$ the parity of $d$. Let $\Ga$ be a connected, directed, ordered graph such that all vertices are at least trivalent, 
and $[\de_{\bar d}(\Ga)]_{\sim_{\bar d}}=0$. (It is easy to see that all the arguments in this section work if we assume $\Ga$ is a formal sum of graphs instead.) 
For example, the tetrahedron graph -- the graph with 4 vertices and 1 edge between each pair of vertices, arbitrarily directed and ordered -- satisfies these conditions. 

For $V'\subset V(\Ga)$, we denote by $\Ga_{V'}$ the subgraph of $\Ga$ spanned by the vertices in $V'$, and by $\Ga/V'$ the graph obtained from $\Ga$ by contracting $\Ga_{V'}$ to one single vertex $[V']_v$. The order of edges in $\Ga_{V'},\Ga/V'$ and the order of vertices in $\Ga_{V'}$ are the same as in $\Ga$. The order of vertices in $\Ga/V'$ is defined as: $[V']_v$ is the first one, and the rest are ordered as in $\Ga$. 

Suppose $A\subset\{\i\}\sqcup V(\Ga)$, $|A|\ge2$.  Define 
\[\Ga_A=\begin{cases}\Ga_{A}, \tn{ if }\i\notin A,\\
\Ga/(A\!-\!\i), \tn{ if } \i\in A;
\end{cases}\quad
\Ga/\Ga_A=\begin{cases}
\Ga/A, \tn{ if }\i\notin A,\\
\Ga_{A-\i},\tn{ if } \i\in A.
\end{cases}\]

We say $A$ is of 
\begin{itemize}
\vspace{-.5cm}
\itemsep=0em
\item type 1, if $\Ga_A$ has a zero-valent or univalent vertex, and $|A|\ge3$ or $\Ga_A$ has no edge; 
\item type 2, if $\Ga_A$ has a bivalent vertex but no uni- or zero-valent vertex; 
\item type 3, if all vertices of $\Ga_A$ are at least trivalent,
\item type 4, if $\Ga_A$ has exactly 2 vertices with 1 edge connecting them. 
\end{itemize}
\vspace{-.45cm}
Notice that since all vertices of $\Ga$ are at least trivalent, $\i\in A\implies A$ is of type 3. 

Suppose $A$ as above is of type 2. We fix $v_A$ a bivalent vertex in $A$. Denote by $e_A^1,e_A^2\in E(\Ga)$ the two edges connected to $v_A$. Define
$\si_A\in\tS_{E(\Ga)}$ as follows: 
\[\begin{cases}
\si_A(e^\pm)=e^\pm \ \tn{ if }\  e\neq e_A^1,e_A^2;\\
\si_A({e^1_A}^\pm)={e^2_A}^\mp \ \tn{ if }e^1_A,e^2_A\tn{ both start or both end at }v_A;\\
\si_A({e^1_A}^\pm)={e^2_A}^\pm \ \tn{ if one of }e^1_A,e^2_A\tn{ starts at }v_A,\tn{ the other ends at }v_A. 
\end{cases}\]

Now we look at $A$ of type 4 above. Since $[\de_{\bar d}(\Ga)]_{\sim_{\bar d}}=0$, there exists a pairing (given $\Ga$, let us choose it once and for all, and call it {\it $\Ga$-pairing}) between type 4 $A$'s such that, if $A_1,A_2$ are paired (call them a {\it $\Ga$-pair}), then there exists an undirected, unordered graph isomorphism $\al_{A_1A_2}\!:\Ga/A_1\to\Ga/A_2$, as in the last paragraph of Section~\ref{graph_homology_sec}. Denote by $e_1,e_2$ the edge in $\Ga$ between the two vertices in $A_1,A_2$, respectively. We call $e_1,e_2$ a {\it $\Ga$-pair} as well, and denote $\al_{A_1A_2}$ also by $\al_{e_1e_2}$. We define $\si_{e_1e_2}\in\tS_{E(\Ga)}$ to be: 
\[\begin{cases}
\si_{e_1e_2}({e_1}^\pm)={e_2}^\pm;\\
\si_{e_1e_2}({e^\Ga_i}^\pm)={e^\Ga_j}^\pm, \tn{ if }\al_{A_1A_2}(e^{\Ga}_i)=e^{\Ga}_j, \tn{ preserving direction;} \\
\si_{e_1e_2}({e^\Ga_i}^\pm)={e^\Ga_j}^\mp, \tn{ if }\al_{A_1A_2}(e^{\Ga}_{i})=e^{\Ga}_j, \tn{ reversing direction.} 
\end{cases}\]
Note that we implicitly identified edges in $E_\Ga\!-\!\{e_1\}$ (resp. $E_\Ga\!-\!\{e_2\}$) with edges in $\Ga/A_1$ (resp. $\Ga/A_2$). Evidently $\si_{e_2e_1}=\si^{-1}_{e_1e_2}$. 

If $\Ga$ is trivalent, then $[A_1]_v,[A_2]_v$ are the only vertices of valency~4 in $\Ga/A_1,\Ga/A_2$, respectively. So $\al_{A_1A_2}$ must map $[A_1]_v$ to $[A_2]_v$. 

\section{Defining various spaces, all having a \texorpdfstring{$\cG$}{G}-action}\label{space_sec}


Let $(M,\i)$ be as in Section \ref{thm_sec}. Denote by $\De\subset M\sx M$ the diagonal.
Denote by $C_I(M)=M^I\!-\!\De_{\tn{big}}$ the configuration space of distinct marked points on $M$ labeled by $I$. If $I$ is ordered, let $C_I(M)$ be oriented by the product orientation on $M^I$. Denote by $\ov{C}_I(M)$ its Fulton-MacPherson compactification \cite{FM}. It can be constructed from $M^I$ by a sequence of real blow ups along various diagonals. For example, $\ov{C}_{\{1,2\}}(M)$ is the real blow up of $M\sx M$ along $\De$. The space $\ov{C}_I(M)$ has the structure of a smooth manifold with boundaries and corners. For $I'\subset I$ we have a smooth forgetful map $f_{I'}:\ov{C}_I(M)\lra\ov{C}_{I'}(M)$ lifting the map 
$$M^I\lra M^{I'},\quad (x_i)_{i\in I}\lra (x_i)_{i\in I'}.$$
If $I'$ has only one element $i$, we also denote $f_i=f_{I'}$. 

Denote $C_I(M,\i)=\{(x_i)_{i\in I}\in C_I(M)|\,x_i\neq\i,\forall i\}$, oriented in the same way as $C_I(M)$. 
Denote by $\ccM{I}$ its Fulton-MacPherson compactification, which is defined to be the preimage of $\i$ under the forgetful map $f_{\{*\}}:\ov{C}_{I\sqcup\{*\}}(M)\to M.$ 
For simplicity we write $\ccM{n}:=\ccM{\{1,\ldots,n\}}$. For example, $\ccM{2}$ is described in Section \ref{thm_sec}. Define $\tau:\ccM{2}\to\ccM{2}$ to be the map swapping the two marked points.  

For $A\subset\{\i\}\sqcup I$, $|A|\ge2$, we denote by $\mathring\cS_A\subset\ccM{I}$ the (open) boundary stratum corresponding to that the marked points with labels in $A$ coincide. Denote by $\ov\cS_A$ its closure in $\ccM{I}$. 

\subsection{Defining various spaces}\label{defspace_sec}

Recall in Section \ref{thm_sec} we defined
$$\cG:=\big\{(\tl{g},g)\in\Homeo(\ccM{2})\sx\Homeo_+(M,N_\i)\big|\ g\circ f_\pm=f_\pm\circ\tl g\}.$$
It is easy to see that the action of $\cG$ commutes with the point-swapping map $\tau$. 
We call the action of $\cG$ on $\ccM{2}^I$ by acting simultaneously on every factor the {\it diagonal action}. All the actions of $\cG$ we talk about below are action by homeomorphisms. 

Let $\Ga$ be a graph as in Section~\ref{fixagraph_sec}. Assume $\Ga$ is trivalent. 
For an edge $e$ of $\Ga$, we denote by $f_e:\ccM{V(\Ga)}\to\ccM{2}$ the forgetful map lifting 
$$M^{V(\Ga)}\lra M^{2},\quad(x_v)_{v\in V(\Ga)}\lra(x_{v^\Ga_-(e)},x_{v^\Ga_+(e)}).$$

\begin{dfn} Denote $f_\Ga=(f_e)_{e\in E(\Ga)}\!:\!\ccM{V(\Ga)}\to\ccM{2}^{E(\Ga)}$. Define \begin{gather*}C_\Ga(M,\i)\subset\ccM{\Ga}\subset\ccM{2}^{E(\Ga)}\\
\ccM{\Ga}=\tn{image}(f_\Ga),\ C_\Ga(M,\i)=f_\Ga(C_{V(\Ga)}(M,\i)).
\end{gather*}
\end{dfn}
Since $\Ga$ is connected, $f_\Ga|_{C_{V(\Ga)}(M,\i)}$ is an embedding. Thus it gives a diffeomorphism $C_{V(\Ga)}(M,\i)\to C_\Ga(M,\i)$. Since $V(\Ga)$ is ordered, this also gives $C_\Ga(M,\i)$ an orientation. 
Since $C_\Ga(M,\i)$ can be written as 
$$\big\{(z_e)_{e\in E(\Ga)}\in\ccM{2}^{E(\Ga)}\big|\,\forall e,e'\in E(\Ga),\forall s,s'\in\{+,-\}, f_s(z_e)=f_{s'}(z_{e'})\text{ iff }v_s(e)=v_{s'}(e')\big\},$$
it is invariant under the diagonal $\cG$-action.


\begin{lmm}\label{1_lmm}
$\ccM{\Ga}$ is the closure of $C_\Ga(M,\i)$ in $\ccM{2}^{E(\Ga)}$. 
\end{lmm}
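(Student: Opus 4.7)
The plan is to prove the two inclusions of sets separately, using only that $\ccM{V(\Ga)}$ is compact and that the open configuration space is dense in its Fulton--MacPherson compactification.

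For the inclusion $\ccM{\Ga}\supseteq\overline{C_\Ga(M,\i)}$, I would argue as follows. Since $\ccM{V(\Ga)}$ is compact (a standard feature of the Fulton--MacPherson compactification) and $f_\Ga$ is continuous, the image $\ccM{\Ga}=f_\Ga(\ccM{V(\Ga)})$ is compact, hence closed in the Hausdorff space $\ccM{2}^{E(\Ga)}$. By definition it contains $C_\Ga(M,\i)=f_\Ga(C_{V(\Ga)}(M,\i))$, so it contains its closure.

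For the reverse inclusion $\ccM{\Ga}\subseteq\overline{C_\Ga(M,\i)}$, I would use the (standard) fact that $C_{V(\Ga)}(M,\i)$ is dense in $\ccM{V(\Ga)}$. This is immediate from the construction: $\ccM{V(\Ga)}$ is obtained from $M^{V(\Ga)}$ by a finite sequence of real oriented blow-ups along (proper transforms of) diagonals and loci where some coordinate equals $\i$, and each such blow-up leaves the complement of the blown-up locus intact and dense. Given any $y\in\ccM{\Ga}$, pick $x\in\ccM{V(\Ga)}$ with $f_\Ga(x)=y$ and a sequence $x_n\in C_{V(\Ga)}(M,\i)$ with $x_n\to x$. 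By continuity of $f_\Ga$, the points $f_\Ga(x_n)\in C_\Ga(M,\i)$ converge to $y$, so $y$ lies in the closure of $C_\Ga(M,\i)$.

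There is not really a serious obstacle here; the whole content is compactness of the domain plus density of the open stratum. The only point worth double-checking is that the density statement for $C_{V(\Ga)}(M,\i)\subset\ccM{V(\Ga)}$ really does follow from the description of $\ccM{V(\Ga)}$ as $f_{\{*\}}^{-1}(\i)\subset\ov{C}_{V(\Ga)\sqcup\{*\}}(M)$ given in Section~\ref{defspace_sec}; this reduces to the analogous density statement for $C_{V(\Ga)\sqcup\{*\}}(M)$ in $\ov{C}_{V(\Ga)\sqcup\{*\}}(M)$ restricted to the fiber over $\i$, which is a standard property of the Fulton--MacPherson construction and is presumed familiar (the paper cites \cite{FM}, \cite{Lescop}).
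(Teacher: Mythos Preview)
Your proof is correct and follows essentially the same approach as the paper's: compactness of $\ccM{V(\Ga)}$ gives that the image is closed (hence contains the closure), and density of $C_{V(\Ga)}(M,\i)$ in $\ccM{V(\Ga)}$ gives that the image of the open stratum is dense in the image. The paper's version is slightly terser but the argument is identical.
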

\begin{proof}
Since $\ccM{V(\Ga)}$ is compact, $f_\Ga(\ccM{V(\Ga)})$ is closed and thus contains the closure of $C_\Ga(M,\i)$. On the other hand, 
$C_{V(\Ga)}(M,\i)$ is dense in $\ccM{V(\Ga)}$, so $C_\Ga(M,\i)=f_\Ga\big(C_{V(\Ga)}(M,\i)\big)$ is dense in $f_\Ga(\ccM{V(\Ga)})$. 
\end{proof}

Abusing notation, we denote by the projection 
$\ccM{\Ga}\to \ccM{2}$ to the $e$-th coordinate still by $f_{e}$ and denote the inclusion map $\ccM{\Ga}\to\ccM{2}^{E(\Ga)}$ still by $f_\Ga$. It follows directly from Lemma~\ref{1_lmm} that $\ccM{\Ga}$ is invariant under the diagonal action of $\cG$ on $\ccM{2}^{E(\Ga)}$.

\begin{dfn}\label{Saction_dfn}
For $I$ an ordered set, define an action $\phi$ of $\tS_{I}$ on $\ccM{2}^I$ by diffeomorphisms: for $\si\in\tS_I$, 
\begin{gather*}
\phi(\si):\ccM{2}^I\lra\ccM{2}^I,\qquad
\phi(\si)(z_i)_{i\in I}=\bigg(\begin{cases}z_j,\tn{ if }\si(i^\pm)=j^\pm\\\tau(z_j),\tn{ if }\si(i^\pm)=j^\mp
\end{cases}\bigg)_{i\in I}.
\end{gather*}
\end{dfn}
It is clear from definition that every $\phi(\si)$ is equivariant with respect to the diagonal action of $\cG$ on $\ccM{2}^I$. 

\begin{dfn}
$$\tcM:=\bigcup_{\si\in\tS_{E(\Ga)}}\phi(\si)\big(\ccM{\Ga}\big)\subset\ccM{2}^{E(\Ga)}$$
We still denote the inclusion map $\tcM\to\ccM{2}^{E(\Ga)}$ by $f_\Ga$ and its $e$-th factors by $f_{e}$. 
\end{dfn}
It follows from the $\cG$-invariance of $\ccM{\Ga}$ and the $\cG$-equivariance of each $\phi(\si)$ that $\tcM$ is invariant under the diagonal action of $\cG$ on $\ccM{2}^{E(\Ga)}$. We therefore define the action of $\cG$ on $\tcM$ to be the restriction of the diagonal action. The maps $f_{e}$ are clearly $\cG$-equivariant. 

It seems that $\tcM$ (and its bundle version) is the appropriate space that accommodates Kontsevich's characteristic classes. It is defined this way -- taking the $|\wt{S}_{E(\Ga)}|$ copies of $\ccM{\Ga}$ inside of $\ccM{2}^{E(\Ga)}$ and taking their union -- because this makes the type 2 and 4 boundary strata of the many copies of $\ccM{\Ga}$ cancel with each other; see Figure \ref{X_fig}. On the other hand, being a subspace of $\ccM{2}^{E(\Ga)}$ automatically makes it (and all its subspaces) metrisable, as a topological space, which is needed for the various arguments regarding covering dimension and \v{C}ech cohomology below. 

We first make a remark that in the definition of $X_\Ga$ as a union, the ``main stratum'' parts either coincide or do not intersect. This is the content of Lemma \ref{chambers_lmm} below. 

Recall $\Au(\Ga)$ is the group of automorphisms of $\Ga$ as an undirected, unlabeled graph. An element $\al\in\Au(\Ga)$ consists of permutations $\al_V\in S_{V(\Ga)}$ and $\al_E\in S_{E(\Ga)}$. 
Denote by $\ga:\Au(\Ga)\lra\Diff\big(\ccM{V(\Ga)}\big)$ the action of $\Au(\Ga)$ on $\ccM{V(\Ga)}$ by permuting marked points according to $\al_V$, namely, $\ga(\al)$ lifts the map 
$$M^{V(\Ga)}\lra M^{V(\Ga)},\quad (x_v)_{v\in V(\Ga)}\lra(x_{\al_V(v)})_{v\in V(\Ga)}.$$
Then (recall $\psi_\Ga:\Au(\Ga)\to\tS_{E(\Ga)}$ defined right before Section \ref{graph_homology_sec})
\begin{equation}\label{permutation_eqn}
f_\Ga\circ\ga(\al)=\phi(\psi_\Ga(\al))\circ f_\Ga.
\end{equation}

\begin{lmm}\label{chambers_lmm}
For $\si\in\tS_{E(\Ga)}$, if $\si\notin\tn{image}(\psi_\Ga)$, then $\phi(\si)\big(C_\Ga(M,\i)\big)\cap \ccM{\Ga}=\eset$; if $\si=\psi_\Ga(\al)$ for some $\al\in\Au(\Ga)$, then $\phi(\si)\big(C_\Ga(M,\i)\big)=C_\Ga(M,\i)$ and $\phi(\si)$ changes its orientation by $\sgn(\al,\vertex)^d$. 
\end{lmm}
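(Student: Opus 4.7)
\textbf{Proof proposal for Lemma \ref{chambers_lmm}.}

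The second assertion is essentially a direct consequence of equation \eref{permutation_eqn}. The plan is: given $\al\in\Au(\Ga)$ with $\si=\psi_\Ga(\al)$, the diffeomorphism $\ga(\al)$ preserves $C_{V(\Ga)}(M,\i)$ (it just permutes marked points), so applying $f_\Ga$ to both sides of $f_\Ga\circ\ga(\al)=\phi(\si)\circ f_\Ga$ gives $\phi(\si)(C_\Ga(M,\i))=C_\Ga(M,\i)$. For orientations, $\ga(\al)$ acts on $C_{V(\Ga)}(M,\i)\subset M^{V(\Ga)}$ by permuting $|V(\Ga)|$ factors of a $d$-manifold, hence multiplies orientation by $\sgn(\al_V)^d=\sgn(\al,\vertex)^d$; since $f_\Ga|_{C_{V(\Ga)}(M,\i)}$ is an orientation-preserving diffeomorphism onto $C_\Ga(M,\i)$ (by the orientation convention just after the definition of $C_\Ga(M,\i)$), the same sign is picked up by $\phi(\si)|_{C_\Ga(M,\i)}$.

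For the first assertion, I would argue contrapositively: assume $\phi(\si)(C_\Ga(M,\i))\cap\ccM{\Ga}\neq\eset$ and construct $\al\in\Au(\Ga)$ with $\psi_\Ga(\al)=\si$. The key input is that a point of $C_\Ga(M,\i)$ is, by the embedding from $f_\Ga|_{C_{V(\Ga)}(M,\i)}$, a genuine distinct configuration $(x_v)_{v\in V(\Ga)}$ in $M-\{\i\}$; applying $\phi(\si)$ only permutes the $E(\Ga)$-coordinates and possibly applies $\tau$ on some of them, so the resulting tuple $z=\phi(\si)(y)$ has all entries of the form $(x_a,x_b)$ with $a\neq b$, i.e.\ no genuine collisions occur. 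If in addition $z\in\ccM{\Ga}=f_\Ga(\ccM{V(\Ga)})$, then the preimage point $w\in\ccM{V(\Ga)}$ must in fact lie in $C_{V(\Ga)}(M,\i)$ (any collision stratum would force at least two coordinates of $f_\Ga(w)$ to record a direction sphere rather than a pair of distinct honest points of $M$). So $w$ corresponds to a tuple $(x_{\pi(v)})$ for some map $\pi:V(\Ga)\to V(\Ga)$; since the points $x_v$ are pairwise distinct and each appears as an entry of $z$ (using connectedness and no isolated vertices of $\Ga$), $\pi$ is a bijection.

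Next I would read off from the identity $f_\Ga(w)=\phi(\si)(y)$, edge by edge, that the pair $(\pi^{-1},\bar\si)$ (where $\bar\si\in S_{E(\Ga)}$ is the permutation induced by $\si$) preserves incidence: whenever an edge $e$ has endpoints $v_\pm(e)$, the edge $\bar\si(e)$ has endpoints $\pi^{-1}(v_\pm(e))$, with the $\pm$-matching determined by whether $\si$ swaps $e$'s two slots. This exactly packages as an element $\al\in\Au(\Ga)$ with $\al_V=\pi^{-1}$, $\al_E=\bar\si$, and the direction-swap bookkeeping gives $\psi_\Ga(\al)=\si$ by unwinding the definition of $\psi_\Ga$.

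The only slightly delicate step I anticipate is justifying that $w\in C_{V(\Ga)}(M,\i)$ (i.e.\ lies in the open stratum) and that $\pi$ is a \emph{bijection} on $V(\Ga)$; both follow from the fact that $\Ga$ is connected and trivalent, so every vertex appears as an endpoint of some edge, and the distinct finite set $\{x_v\}$ forces any point in $f_\Ga^{-1}(z)$ to be a permutation of the original configuration with no bubbling. Everything else is bookkeeping of signs and indices, which I would separate into a short lemma so as not to clutter the main argument.
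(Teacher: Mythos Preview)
Your approach is essentially the same as the paper's: take a point in the intersection, argue that its preimage in $\ccM{V(\Ga)}$ lies in the open stratum, extract a vertex permutation, and check it extends to a graph automorphism $\al$ with $\psi_\Ga(\al)=\si$. The orientation argument via $\ga(\al)$ permuting $d$-dimensional factors is also what the paper has in mind (``the rest is immediate'').

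There is one small gap in your justification that $w\in C_{V(\Ga)}(M,\i)$. You write that ``any collision stratum would force at least two coordinates of $f_\Ga(w)$ to record a direction sphere rather than a pair of distinct honest points of $M$.'' This is false when the colliding vertices are \emph{non-adjacent} in $\Ga$: if $v_1,v_2$ collide but share no edge, every coordinate $f_e(w)$ is still an honest point of $C_2(M,\i)$. The paper fixes this with a counting argument: define the \emph{set of vertex positions} of $z=(z_e)_e$ to be $\bigcup_e\{f_+(z_e),f_-(z_e)\}\subset M$. This set is invariant under $\phi(\si)$ (which only permutes factors and applies $\tau$), has exactly $|V(\Ga)|$ elements when $z\in C_\Ga(M,\i)$, and has strictly fewer for any $w\notin C_{V(\Ga)}(M,\i)$ (since at least two marked points coincide). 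That forces $w$ into the open stratum. Replace your parenthetical with this, and the proof goes through.

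One further remark: the paper makes the simplifying assumption that $\Ga$ has no repeated edges, which guarantees that the vertex permutation $\al_V$ determines $\al\in\Au(\Ga)$ uniquely and allows the final step ``$\phi(\si\psi_\Ga(\al))$ fixes $f_\Ga(x)$, hence $\si\psi_\Ga(\al)=\id$'' to go through cleanly (no two factors of $f_\Ga(x)$ agree even modulo $\tau$). Your edge-by-edge incidence reading accomplishes the same thing and in fact works without that simplification, so your version is slightly more general at this step.
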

\begin{proof}
This lemma is intuitively quite simple: an edge $e$ of $\Gamma$ contains the information of the two vertices $v_\pm(e)$; so, since every vertex of $\Gamma$ is connected to some edge, all the vertices of $\Gamma$ (and hence $\Gamma$ itself) can be recovered from its edges. 
We spell out the details below: 

For simplicity we prove the lemma in the case $\Ga$ has no repeated edges; it can easily be generalized to the other cases as well. 
Given an element $z=(z_e)_{e\in E(\Ga)}\in\ccM{2}^{E(\Ga)}$, we define its {\it set of vertex positions} $\bigcup_e\big\{f_+(z_e),f_-(z_e)\big\}\subset M$. It does not change under the $\phi(\si)$ action which only permutes factors. 
For $x\!\in\!\ccM{V(\Ga)}$, we denote by
$\{f_v(x)\}_{v\in V(\Ga)}$ the set of vertex positions of $f_\Ga(x)$, 
which has exactly $|V(\Ga)|$ (distinct) elements if $x\in C_{V(\Ga)}(M,\i)$ and less otherwise. Thus, if we suppose $\phi(\si)\big(C_\Ga(M,\i)\big)\cap \ccM{\Ga}\neq\eset$, and
if $x,y\in C_{V(\Ga)}(M,\i)$ are such that $f_\Ga(x)=\phi(\si)f_\Ga(y)$, then there is a permutation $\al_V\in S_{V(\Ga)}$ such that $f_{\al_V(v)}(y)=f_v(x)$. Since $\Ga$ has no repeated edge, there is a unique $\al\in\Au(\Ga)$ with $\al_V$ as given. So $\ga(\al)(x)=y$. So, 
$$f_\Ga(x)=\phi(\si)f_\Ga(y)=\phi(\si)\big(f_\Ga(\ga(\al)(x))\big)=\phi(\si)\circ\phi(\psi_\Ga(\al))(f_\Ga(x))=\phi(\si\psi_\Ga(\al))(f_\Ga(x)).$$
Since $\phi(\si\psi_\Ga(\al))$ acts by permuting factors of $\ccM{2}^{E(\Ga)}$ composed with $\tau$'s, and no two factors of $f_\Ga(x)$ are the same even modulo $\tau$, because $\Ga$ has no repeated edges, we must have $\si\psi_\Ga(\al)=\id$. So $\si=\psi_\Ga(\al^{-1})$. 
This proves the lemma except for the statement about orientation. The orientation statement is straightforward. 
\end{proof}

\begin{dfn}\label{cluster_dfn}
For $A\subset\{\i\}\sqcup V(\Ga),\,|A|\ge2$, 
Define
\begin{align*}
C_{V(\Gamma):A}(M,&\infty)=\\
&\begin{cases}
\{(x_v)_{v\in V(\Ga)}\in (M-\{\infty\})^{V(\Ga)} \big|\, x_v=x_w \tn{ iff } (v,w\in A \tn{ or }v=w)\}, \tn{ if }\infty\notin A, \\
p_{V(\Ga)-A}^{-1}(C_{V(\Ga)-A}(M,\infty))\cap p_{A}^{-1}(\infty,\ldots,\infty), \tn{ if }\infty\in A,
\end{cases}
\end{align*}
where $p_{V(\Ga)-A}:M^{V(\Ga)}\to M^{V(\Ga)-A}$ and $p_A: M^{V(\Ga)}\to M^{A-\{\infty\}}$ are the projections. 
Then we have the following commutative diagram 
$$
\begin{tikzcd}
\mathring{\cS}_A \ar[r,"f_\Ga"] \ar[d] & \ccM{2}^{E(\Ga)} \ar[d,"(f_-{,}f_{+})^{E(\Ga)}"] \\
C_{V(\Ga):A}(M,\infty) \ar[r,"f'_\Ga"] & (M^2)^{E(\Ga)}.
\end{tikzcd}
$$
Define 
$$\cS^\Ga_A=\big(((f_-,f_+)^{E(\Ga)})^{-1}f'_\Ga(C_{V(\Ga):A}(M,\infty))\big) \cap \ccM{\Ga}.$$
Denote by $\ov\cS^\Ga_{A}$ the closure of $\cS^\Ga_{A}$. 
For an edge $e$ of $\Ga$, denote $\cS^\Ga_e:=\cS^\Ga_{\{v^\Ga_+(e),v^\Ga_-(e)\}}$. 
\end{dfn}

Intuitively, elements of $\cS^\Ga_A$ are configurations of points such that those in $A$ all coincide in $M$ and no other two points coincide. 
Notice that $\ov\cS_A=f_\Ga^{-1}(\ov\cS^\Ga_A),\ \mathring{\cS}_A\subset f_\Ga^{-1}(\cS^\Ga_A)\subset\ov\cS_A$, and the inclusions are often strict. 

Recall that (see Figure \ref{X_fig}) in $X_\Ga$ as a union of many copies of $\ccM{\Ga}$ inside of $\ccM{2}^{E(\Ga)}$, we want those codimension-1 strata, $\cS^\Ga_A$, of type 2 or 4 in the various copies of $\ccM{\Ga}$ to glue together to form bindings. 
For example, if the binding has only two pages, it should locally be two topological manifolds with boundary gluing together along their boundaries, forming a topological manifold. 
This is only true if we remove the non-nice, ``singular'' parts of those $\cS^\Ga_A$ and only leave the nice part, which is what we call $\rrsa$ in Definition \ref{ringGa_dfn} below. 

\begin{dfn}\label{ringGa_dfn}
For $A\subset\{\i\}\sqcup V(\Ga),|A|\ge2$, define $\mathring\cS^\Ga_A\subset\cS^\Ga_A$ to be the set of points such that locally $\ccM{\Ga}$ is a topological manifold with boundary, i.e., 
\begin{align*}
\mathring\cS^\Ga_A:=\Big\{&x\in\cS^\Ga_A\Big|\ \exists\ U\subset\ccM{\Ga}\tn{ open neighborhood of }x,\tn{ and } \\ &\tn{ homeomorphism } \nu\!:\!U\to\R^{d|V(\Ga)|\!-\!1}\!\sx\R^{\ge0}
\tn{ such that }\nu(\cS^\Ga_A\cap U)=\R^{d|V(\Ga)|\!-\!1}\!\sx\{0\}\Big\}.
\end{align*}
Moreover, define 
$$\rrsa=\rsa-\bigcup_{\si\in\tS_{E(\Ga)}}\bigcup_{\begin{subarray}{c}A'\subset\{\i\}\sqcup V(\Ga)\\|A'|\ge2\end{subarray}}\phi(\si)(\ov\cS^\Ga_{A'}-\mathring{\cS}^\Ga_{A'}).$$
\end{dfn}

In Definition \ref{strata_dfn} below we give names to the various parts of $X_\Ga$. The ``good'' part, consisting of the main strata and the ``non-singular'' parts of the type 2 and 4 codimension-1 strata, will function like a topological manifold with bindings as in Figure \ref{X_fig}.
The remaining of $X_\Ga$ can again be divided into two parts: 
$S$, consisting of all $\ov{\cS}^\Ga_A$ of type 3, and $T_2$, consisting of everything else. 
We will show that neither $S$ or $T_2$ causes an issue in our argument, but for different reasons: we will show that $T_2$ is of codimension at least 2 (hence the ``2'' in the name ``$T_2$''), therefore not contributing to an intersection-theoretical argument; although $S$ has a larger dimension, it can be shown (in Section \ref{landinginrelative_subsec}) that the cohomology class we care about is actually in the relative cohomology $H^*(X_\Ga,S)$. 

\begin{dfn}\label{strata_dfn}
Define the following subsets of $\tcM$:  
\begin{gather*}
X_\Ga^\tn{good}=\bigcup_{\si\in\tS_{E(\Ga)}}\phi(\si)\Big(C_\Ga(M,\i)\cup\bigcup_{A\tn{ of type 2 or 4}}\rrsa\Big),
\qquad T_1=\tcM-X_\Ga^\tn{good},\\
S=\bigcup_{\si\in\tS_{E(\Ga)}}\bigcup_{A\tn{ of type 3}}\phi(\si)(\ov{\cS}^\Ga_A),\qquad T_2=\bigcup_{\si\in\tS_{E(\Ga)}}\phi(\si)\bigcup_{A}(\ov\cS^\Ga_A-\rsa).
\end{gather*}
$A$ in the above formulas are subsets of $\{\infty\}\sqcup V(\Gamma)$ such that $|A|\ge2$. 
\end{dfn}
Notice that if $A$ is of type 1, then $\rsa=\eset$ since $\ov{\cS}_A\subset \ccM{V(\Ga)}$ is contracted by $f_\Ga$. So
$T_1\supset T_2\supset T_1\!-\!S$. Also $T_2,T_1,S$ are invariant under the $\tS_{E(\Ga)}$-action $\phi$. And $T_1,T_2$ are closed, since every point in $\rrsa$ (resp. $\rsa$) has a neighborhood lying in $C_\Ga(M,\i)\cup\rrsa$ (resp. $C_\Ga(M,\i)\cup\rsa$). 
It is evident from the above three definitions that for every $A$,  $\cS^\Ga_A,\ov\cS^\Ga_A,\rsa,\rrsa\subset\ccM{\Ga}$ are invariant under the $\cG$-action, and therefore $S,T_1,T_2$ are invariant under the $\cG$-action too. 

Recall the {\it covering dimension} of a topological space $X$ is the biggest integer $N$ satisfying: any open cover of $X$ has a refinement $\cU$ such that if $U_0,\ldots,U_{N+1}\in\cU,U_i\neq U_j\,\forall i,j$, then $U_0\cap\ldots\cap U_{N+1}=\eset$. Below we write $\dim_t(X)$ for the covering dimension of $X$ (``t'' stands for ``topological''). 
Notice that everything we have defined so far are subspaces of $\ccM{2}^{E(\Ga)}$, thus are all metrizable topological spaces.

In 
Section \ref{structureX_sec} below, we show that 
\begin{itemize}
\vspace{-.5cm}
\itemsep=0em
\item $\dim_t(T_2)\le d|V(\Ga)|-2,\ \dim_t(T_1)\le d|V(\Ga)|-1$;
\item $H^{d|V(\Ga)|}(X_\Ga,S;R)$ admits a non-trivial $\cG$-equivariant map to $R$. (Recall $R=\Z$ or $\R$ is the coefficient ring; we take the trivial $\cG$-action on $R$.) This statement is the goal of Section \ref{space_sec} and is all what we need to re-construct Kontsevich's classes. 
In all what follows we will call this map $\rho$. 
\end{itemize}

\subsection{Structure of \texorpdfstring{$\tcM$}{X}}\label{structureX_sec}

We list some basic properties of the covering dimension that will be used repeated later in this subsection. Let $X,X'$ be non-empty metrisable spaces. 
\begin{itemize}
\vspace{-.5cm}
\itemsep=0em
\item If $Y\subset X$ is closed, then $\dim_t(Y)\le\dim_t(X)$. This follows from definition. 
\item If $Y_1,\ldots,Y_n\subset X$ are closed, $\dim_t(Y_i)\le m,\forall\,i$, then $\dim_t(Y_1\cup\ldots\cup Y_n)\le m$; see \cite[Theorem 9-10]{DT}. 
\item $\dim_t(X\sx X')\le\dim_t(X)+\dim_t(X')$; see \cite[Theorem 12-14]{DT}.
\end{itemize}

\begin{lmm}\label{dim_lmm}
Let $f:X\lra Y$ be a smooth map between smooth manifolds (possibly with boundary and corners). Assume $X$ is compact. Denote 
$$A_r=\{x\in X\,\big|\,\text{rank}(d_xf)\le r\},\qquad fA_r=f(A_r).$$
Then $\dim_t(fA_r)\leq r.$ Specifically, $\dim_t(f(X))\le\dim(X)$. 
\end{lmm}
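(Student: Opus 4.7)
The plan is to reduce to a local Euclidean statement and then invoke the Morse--Sard theorem in its Federer-refined form to bound the Hausdorff dimension of $f(A_r)$.

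\textbf{Reduction.} Cover the compact $X$ by finitely many coordinate charts $U_1,\ldots,U_N$ with compact closures, and set $K_i:=A_r\cap\ov{U_i}$. Each $K_i$ is compact, hence $f(K_i)\subset Y$ is closed, and $f(A_r)=\bigcup_i f(K_i)$. By the finite-union property of covering dimension listed just above the lemma, it suffices to prove $\dim_t(f(K_i))\le r$ for each $i$. This reduces the problem to the Euclidean statement: given $K\subset\R^n$ compact, $f\colon U\to\R^m$ smooth on some open $U\supset K$, and $\text{rank}(d_xf)\le r$ for every $x\in K$, show $\dim_t(f(K))\le r$.

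\textbf{Local case.} I would use two classical facts in succession. First, Federer's refinement of the Morse--Sard theorem implies that for smooth $f$, $\cH^{r+\eta}(f(K))=0$ for every $\eta>0$, so the Hausdorff dimension $\dim_H(f(K))\le r$. Second, Szpilrajn's theorem says that for any separable metric space, the covering dimension is at most the Hausdorff dimension. Combining these yields $\dim_t(f(K))\le\dim_H(f(K))\le r$ as required. For the ``specifically'' clause $\dim_t(f(X))\le\dim X$ one does not need Federer at all: since $f$ is locally Lipschitz on compact sets and Lipschitz maps do not increase Hausdorff dimension, $\dim_H(f(X))\le\dim_H(X)=\dim X$ automatically.

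\textbf{Main obstacle and alternative.} The only nontrivial input is Federer--Morse--Sard. If one prefers a self-contained argument, the local case can be proved directly using uniform continuity of $df$: for every $\ve>0$ there is $\de>0$ with $|f(y)-f(x)-d_xf(y-x)|\le\ve|y-x|$ whenever $x\in K$ and $|y-x|<\de$, so $f(K\cap B(x,\de))$ lies in an $\ve\de$-tube about an $r$-dimensional affine disk in $V_x:=\Im(d_xf)$. Patching over a finite subcover of $K$ produces, for every $\eta>0$, a continuous $\eta$-map from $f(K)$ to an $r$-dimensional polyhedron (using orthogonal projections to the local $r$-planes $V_x$ glued via a partition of unity on the nerve), and Alexandroff's theorem characterizing covering dimension via such $\eta$-maps gives $\dim_t(f(K))\le r$. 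The main technical annoyance of this direct route is consistently patching the varying planes $V_x$ into a single polyhedral target, whereas the Federer route bypasses this entirely.
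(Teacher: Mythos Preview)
Your proof is correct and follows essentially the same route as the paper: a Sard--Federer type bound giving $\dim_H(f(A_r))\le r$, then Szpilrajn's theorem to pass to covering dimension, combined with compactness and the finite-union property. The only cosmetic difference is that the paper takes charts on $Y$ rather than on $X$ (which sidesteps the need to further chop $f(K_i)$ into pieces lying in single charts of the target); your reduction to ``$f:U\to\R^m$'' tacitly assumes this and would need one more finite cover of $f(K_i)$ by charts of $Y$, but this is immediate.
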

\begin{proof}
This follows from two celebrated theorems. 
By \cite[Corollary on Page 169]{Sard}, there exist countably many charts $(U_i\subset Y,\phi_i:U_i\to\R^n)_{i=1}^\i$ of $Y$ such that $fA_r\subset\bigcup_iU_i$ and the Hausdorff dimension of $\phi_i(U_i\cap fA_r)$ is at most $r$ for all $i$. By \cite{Szpilrajn}, the covering dimension of a metrisable, separable space (which $\phi_i(U_i\cap fA_r)$ is) is no bigger than its Hausdorff dimension. So, $\dim_t(U_i\cap fA_r)\le r$ for all $i$. Since $X$ is compact and $A_r$ is closed in $X$, $fA_r$ is compact, and so it is covered by the charts $U_1,\ldots,U_m$ for some $m$. By shrinking each $U_i$ a little bit, we have closed subsets $\{V_i\subset U_i\subset Y\}_{i=1}^m$ which still cover $fA_r$. And $\dim_t(V_i\cap fA_r)\le r$ for all $i\le m$. Since $V_i\cap fA_r$ are closed subsets of $fA_r$ and $fA_r$ is their union, $\dim_t(fA_r)\le r$. 
\end{proof}

\begin{crl}
$\dim_t(X_\Ga)\leq d|V(\Ga)|,\ \dim_t(S)\leq d|V(\Ga)|-1.$
\end{crl}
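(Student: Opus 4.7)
The plan is to reduce both dimension estimates to Lemma \ref{dim_lmm}, together with the basic closed-union and homeomorphism-invariance properties of covering dimension that were just recalled.

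For the first inequality, I would first apply Lemma \ref{dim_lmm} to the smooth map
$f_\Ga:\ccM{V(\Ga)}\to\ccM{2}^{E(\Ga)}$: since $\ccM{V(\Ga)}$ is a compact smooth manifold with corners of dimension $d|V(\Ga)|$, the lemma yields $\dim_t(\ccM{\Ga})=\dim_t(f_\Ga(\ccM{V(\Ga)}))\le d|V(\Ga)|$. By definition $X_\Ga=\bigcup_{\si\in\tS_{E(\Ga)}}\phi(\si)(\ccM{\Ga})$, a \emph{finite} union. Each $\phi(\si)$ is a self-homeomorphism of $\ccM{2}^{E(\Ga)}$, so preserves covering dimension; and $\ccM{\Ga}$ is compact (continuous image of a compact set into a Hausdorff space), hence each $\phi(\si)(\ccM{\Ga})$ is closed. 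The finite-closed-union property then gives $\dim_t(X_\Ga)\le d|V(\Ga)|$.

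For the second inequality, I would show the analogous bound stratum by stratum. For any $A\subset\{\i\}\sqcup V(\Ga)$ with $|A|\ge2$, the open stratum $\mathring{\cS}_A\subset\ccM{V(\Ga)}$ is a codimension-$1$ open boundary face, so its closure $\ov{\cS}_A$ is a compact smooth manifold with corners of dimension $d|V(\Ga)|-1$ (alternatively, it is a finite union of locally closed smooth strata of dimension at most $d|V(\Ga)|-1$). Applying Lemma \ref{dim_lmm} to $f_\Ga|_{\ov{\cS}_A}$ and using $\ov{\cS}^\Ga_A=f_\Ga(\ov{\cS}_A)$ (which follows since $\ov{\cS}_A=f_\Ga^{-1}(\ov{\cS}^\Ga_A)$ and $f_\Ga$ is surjective onto $\ccM{\Ga}$) gives $\dim_t(\ov{\cS}^\Ga_A)\le d|V(\Ga)|-1$. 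Since $S=\bigcup_{\si}\bigcup_{A\text{ of type }3}\phi(\si)(\ov{\cS}^\Ga_A)$ is a finite union of closed subsets of $\ccM{2}^{E(\Ga)}$ with this dimension bound, the closed-union property yields $\dim_t(S)\le d|V(\Ga)|-1$.

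The only real care point is verifying that each $\ov{\cS}_A$ can legitimately be fed into Lemma \ref{dim_lmm} as a $(d|V(\Ga)|-1)$-dimensional smooth domain; this is where the structure of the Fulton–MacPherson compactification is used. If one is worried about $\ov{\cS}_A$ not being a manifold-with-corners in its entirety because of deeper strata meeting it, the safer route is to decompose $\ov{\cS}_A$ into the finitely many locally closed strata $\mathring{\cS}_B$ (for partitions $B$ refining the one determined by $A$) that it contains, apply Lemma \ref{dim_lmm} to each (taking a compact manifold-with-corners structure on $\overline{\mathring{\cS}_B}$), and then invoke closed finite unions. Either way, the inequalities drop out; no further input beyond Lemma \ref{dim_lmm}, compactness, and the three bullet points on covering dimension is needed.
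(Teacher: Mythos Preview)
Your proof is correct and follows exactly the approach the paper intends: the corollary is placed immediately after Lemma~\ref{dim_lmm} with no proof precisely because it falls out of that lemma applied to $f_\Ga$ (and its restriction to each closed boundary face $\ov\cS_A$), together with the finite-closed-union and homeomorphism-invariance bullets listed just above. Your justification of $\ov\cS^\Ga_A=f_\Ga(\ov\cS_A)$ via $\ov\cS_A=f_\Ga^{-1}(\ov\cS^\Ga_A)$ and surjectivity of $f_\Ga$ onto $\ccM{\Ga}$ is clean and uses only what is already stated before the corollary; the alternative stratum-by-stratum decomposition you mention at the end is unnecessary, since each $\ov\cS_A$ is genuinely a compact manifold with corners of dimension $d|V(\Ga)|-1$.
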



\begin{dfn}\label{Vga_dfn}
Given a graph $\Ga'$, define $\ov V_{\Ga'}\subset(S^{d-1})^{E(\Ga')}$ to be the image of the map 
$$f^{\R^d}_{\Ga'}=(f'_e)_{e\in E(\Ga')}:\ov{C}^{\quo}_{V(\Ga')}(\R^d)\lra
(S^{d-1})^{E(\Ga')},$$
where $\ov{C}^\quo_{V(\Ga')}(\R^d)$ is the Fulton-MacPherson space of configurations of $V(\Ga')$-marked points in $\R^d$, modulo translation and scaling (so $\dim(\ov C_{V(\Ga')}(\R^d))\!=\!d|V(\Ga')|\!-\!d\!-\!1$; ``quo'' stands for ``quotient''), and $f'_e$ is the unique map induced from 
$$(\R^d)^{V(\Ga')}-\De_{\tn{big}}\lra S^{d-1},\qquad (x_v)_{v\in V(\Ga')}\lra \frac{x_{v_+(e)}-x_{v_-(e)}}{\big|x_{v_+(e)}-x_{v_-(e)}\big|},$$
i.e., $f'_e$ is the direction between the points marked by the vertices adjacent to $e$. 
Notice the $GL(d)$ action on $\R^d$ induces $GL(d)$-actions on $S^{d-1},\ov{C}^\quo_{V(\Ga')}(\R^d),\ov{V}_{\Ga'}$. 
\end{dfn}

Let $C^\quo_{V(\Ga')}(\R^d)$ be the quotient of $(\R^d)^{V(\Ga')}\!-\!\De_{\tn{big}}$ by translation and scaling. 

\begin{lmm}\label{regularvalueunique_lmm}
If $x,y\in C^\quo_{V(\Ga')}(\R^d), x\neq y$, $f^{\R^d}_{\Ga'}(x)=f^{\R^d}_{\Ga'}(y)$, then $d_xf^{\R^d}_{\Ga'}$, $d_yf^{\R^d}_{\Ga'}$ are not injective. 
\end{lmm}
\begin{proof}
Since $C^\quo_{V(\Ga')}(\R^d)$ is a quotient, let us take representatives $x'\in(\R^d)^{V(\Ga')}$ of $x$ and $y'\in(\R^d)^{V(\Ga')}$ of $y$ such that the marked point labeled by $v^{\Ga'}_1$ is the origin and the marked point labeled by $v^{\Ga'}_2$ has norm 1. It is easy to see that for all $0<\la<1$, if $\la x'+(1\!-\!\la)y'\in (\R^d)^{V(\Ga')}\!-\!\De_{\tn{big}}$, then $f^{\R^d}_{\Ga'}([\la x'+(1\!-\!\la)y'])=f^{\R^d}_{\Ga'}(x)=f^{\R^d}_{\Ga'}(y)$. So the differential of $f^{\R^d}_{\Ga'}$ at $x$ or $y$ is 0 in at least one direction. 
\end{proof}

For $A\subset\{\i\}\sqcup V(\Ga),|A|\ge2$, recall the definitions of $\Ga_A,\Ga/\Ga_A$ from Section~\ref{fixagraph_sec}. 
From the construction of Fulton-MacPherson compactification, we have 
\begin{fact}\label{bundleGaA_lmm}
$\ov\cS_A$ is a fiber bundle over $\ov C_{V(\Ga/A)}(M,\i)$ with fiber $\ov{C}^\quo_{V(\Ga_A)}(\R^d)$ and structure group $SL(d)$. 
\end{fact} 
Notice $\ov{\cS}^\Ga_A=f_\Ga(\ov\cS_A)$. 
\begin{crl}\label{bundleA_crl}
$\ov\cS^\Ga_{A}$ is a fiber bundle over $\ov C_{\Ga/{A}}(M,\i)$ with fiber $\ov V_{\Ga_{A}}$ and structure group $SL(d)$. 
The map $f_\Ga|_{\ov\cS_A}$ is an $SL(d)$-bundle map covering $f_{\Ga/A}\!:\!\ov{C}_{V(\Ga/A)}(M,\i)\to\ov C_{\Ga/{A}}(M,\i)$, which is $f^\R_{\Ga_A}$ on each fiber. 
\end{crl}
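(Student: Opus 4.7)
The plan is to deduce this directly from Lemma~\ref{bundleGaA_lmm} by transporting the bundle structure along the fiberwise and basewise components of $f_\Ga|_{\ov\cS_A}$. As observed right before the corollary, $\ov\cS^\Ga_A = f_\Ga(\ov\cS_A)$. Moreover, $f_\Ga|_{\ov\cS_A}$ respects the bundle structure of Lemma~\ref{bundleGaA_lmm}: on the base $\ov C_{V(\Ga/A)}(M,\i)$ it acts as the forgetful map $f_{\Ga/A}$, and on each fiber $\ov C_{V(\Ga_A)}(\R^d)$ it acts as $f^{\R^d}_{\Ga_A}$, whose image is by definition $\ov V_{\Ga_A}$.

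Next I would verify that $f^{\R^d}_{\Ga_A}$ is $SL(d)$-equivariant for the actions from Definition~\ref{Vga_dfn}: for $g\in SL(d)$ and a configuration $(x_v)$, the normalized direction $(x_{v_+(e)}\!-\!x_{v_-(e)})/|x_{v_+(e)}\!-\!x_{v_-(e)}|$ transforms precisely by $v\mapsto gv/|gv|$ on $S^{d-1}$. Combining this with the $SL(d)$-local trivializations of $\ov\cS_A$ from Lemma~\ref{bundleGaA_lmm}, I can replace the fiber $\ov C_{V(\Ga_A)}(\R^d)$ by $\ov V_{\Ga_A}$ while retaining the same $SL(d)$-valued transition cocycle; this produces an $SL(d)$-bundle $\mathcal{B}$ over $\ov C_{V(\Ga/A)}(M,\i)$ with fiber $\ov V_{\Ga_A}$, equipped with a surjective $SL(d)$-bundle map $\ov\cS_A \to \mathcal{B}$ realizing $f^{\R^d}_{\Ga_A}$ on each fiber.

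Finally, I would descend the base from $\ov C_{V(\Ga/A)}(M,\i)$ to $\ov C_{\Ga/A}(M,\i)$ along $f_{\Ga/A}$. A point of $\ov\cS^\Ga_A\subset\ccM{2}^{E(\Ga)}$ is determined by its projection to the $E(\Ga/A)$-coordinates (a point of $\ov C_{\Ga/A}(M,\i)$) together with its $E(\Ga_A)$-coordinates (which lie in $\ov V_{\Ga_A}$, modulo the $SL(d)$ indeterminacy of the screen frame). Since the transition data for $\mathcal{B}$ is given by $SL(d)$-valued coordinate changes on the normal bundle of the bubbling stratum, and these coordinate changes depend only on data visible in $\ov C_{\Ga/A}(M,\i)$, the bundle $\mathcal{B}$ is the pullback along $f_{\Ga/A}$ of the sought $SL(d)$-bundle $\ov\cS^\Ga_A \to \ov C_{\Ga/A}(M,\i)$ with fiber $\ov V_{\Ga_A}$, and $f_\Ga|_{\ov\cS_A}$ factors as $\ov\cS_A \to \mathcal{B} \to \ov\cS^\Ga_A$.

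I expect the main obstacle to be this last descent step: verifying rigorously that the FM normal-coordinate trivializations underlying Lemma~\ref{bundleGaA_lmm} genuinely factor through $f_{\Ga/A}$ on the base, which requires unpacking the blow-up charts near the bubbling stratum and checking that the frame ambiguity on the screen is exactly the information that distinguishes $\ov C_{V(\Ga/A)}(M,\i)$ from $\ov C_{\Ga/A}(M,\i)$. I would also separately treat the case $\i\in A$, where $\Ga_A$ is defined as $\Ga/(A\!-\!\i)$ and the screen sits at the distinguished point $\i$ rather than at a generic bubbling location.
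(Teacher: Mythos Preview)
Your proposal is correct and is essentially the argument the paper has in mind; the paper states this as a corollary of Lemma~\ref{bundleGaA_lmm} without proof, so there is no explicit argument to compare against. Your identification of the descent along $f_{\Ga/A}$ as the only nontrivial point is accurate, and the resolution you sketch is the right one: the $SL(d)$-principal bundle underlying $\ov\cS_A$ in Lemma~\ref{bundleGaA_lmm} is the pullback of the tangent frame bundle of $M$ along the collision-point map $f_{[A]_v}\!:\ov C_{V(\Ga/A)}(M,\i)\to M$, and since $[A]_v$ is adjacent to at least one edge of $\Ga/A$ (connectedness of $\Ga$), this map factors through $f_{\Ga/A}$, so the associated bundle with fiber $\ov V_{\Ga_A}$ is already defined over $\ov C_{\Ga/A}(M,\i)$.
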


Notice that the covering dimension of the total space of a fiber bundle with compact base is no more than the sum of the covering dimensions of its fiber and base: over each chart, the bundle is a product, so this is true; and since covering dimension does not increase when taking finite unions of closed subsets, this is true for the whole total space.

\begin{crl}
Suppose $A\subset\{\i\}\sqcup V(\Ga)$ is of type~1, then $\dim_t(\ov{\cS}^\Ga_{A})\leq d|V(\Ga)|-2$. 
\end{crl}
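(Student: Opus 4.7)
The plan is to bound $\dim_t(\ov\cS^\Ga_A)$ via the fiber bundle description of Corollary~\ref{bundleA_crl} together with the dimension inequality for bundles with compact base recalled just before it, so the task reduces to bounding the covering dimensions of the base $\ccM{\Ga/A}$ and the fiber $\ov V_{\Ga_A}$ separately. Since $A$ is of type~1, the remark at the end of Section~\ref{fixagraph_sec} forces $\i\notin A$, hence $|V(\Ga/A)|=|V(\Ga)|-|A|+1$. The base $\ccM{\Ga/A}$ is the image under the smooth map $f_{\Ga/A}$ of the compact smooth manifold-with-corners $\ccM{V(\Ga/A)}$ of dimension $d(|V(\Ga)|-|A|+1)$, so Lemma~\ref{dim_lmm} bounds its covering dimension by $d(|V(\Ga)|-|A|+1)$. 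It will then suffice to establish the fiber bound
$$\dim_t(\ov V_{\Ga_A})\le d|A|-d-2,$$
since adding the two gives $\dim_t(\ov\cS^\Ga_A)\le d|V(\Ga)|-2$.

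I will prove the fiber bound by cases dictated by the type~1 structure. If $\Ga_A$ has no edge then $\ov V_{\Ga_A}$ is a single point, and the bound is trivial (using $d\ge 3$ and $|A|\ge 2$). Otherwise the type~1 hypothesis forces $|A|\ge 3$, and $\Ga_A$ has either a zero- or a univalent vertex $v^*$. When $v^*$ is zero-valent, no edge of $E(\Ga_A)$ is incident to $v^*$, so $f^{\R^d}_{\Ga_A}$ factors through the forgetful map $\ov C_{V(\Ga_A)}(\R^d)\to\ov C_{V(\Ga_A)\setminus\{v^*\}}(\R^d)$ onto a compact manifold-with-corners of dimension $d|A|-2d-1\le d|A|-d-2$, and Lemma~\ref{dim_lmm} applied to the composite delivers the bound. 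When $v^*$ is univalent, let $e$ be its unique incident edge and $w$ its other endpoint; the map $f^{\R^d}_{\Ga_A}$ is invariant under the $\R^{>0}$-action $x_{v^*}\mapsto x_w+t(x_{v^*}-x_w)$ fixing all other marked points, since this only rescales the length of $x_{v^*}-x_w$ and no other edge involves $v^*$. The infinitesimal generator of this action is transverse to the translation and global scaling already quotiented out in $\ov C_{V(\Ga_A)}(\R^d)$, so $\tn{rank}(df^{\R^d}_{\Ga_A})\le d|A|-d-2$ at every point of the open stratum.

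The remaining point is to upgrade this rank bound from the open stratum to the whole compact source $\ov C_{V(\Ga_A)}(\R^d)$, as needed to apply Lemma~\ref{dim_lmm} directly. I expect this to be the only subtlety, and it is handled by the lower semi-continuity of rank: the set $\{y:\tn{rank}(df^{\R^d}_{\Ga_A})_y\le d|A|-d-2\}$ is closed in $\ov C_{V(\Ga_A)}(\R^d)$ and contains the open dense stratum, hence equals the whole space. Lemma~\ref{dim_lmm} with $r=d|A|-d-2$ then produces the fiber bound, and combining it with the base bound above yields $\dim_t(\ov\cS^\Ga_A)\le d|V(\Ga)|-2$, completing the proof.
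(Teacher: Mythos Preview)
Your proof is correct and follows essentially the same route as the paper: bound the base $\ccM{\Ga/A}$ and the fiber $\ov V_{\Ga_A}$ separately via Lemma~\ref{dim_lmm}, using that the zero- or univalent vertex forces $f^{\R^d}_{\Ga_A}$ to lose at least one dimension. The only cosmetic difference is in the univalent case: the paper factors $f^{\R^d}_{\Ga_A}$ explicitly through the compact manifold $S^{d-1}\times\ov C_{V(\Ga_A)\setminus\{v^*\}}(\R^d)$ of dimension $d|A|-d-2$ and applies the ``$\dim_t(f(X))\le\dim X$'' part of Lemma~\ref{dim_lmm} directly, whereas you obtain the same bound via the rank estimate plus lower semi-continuity---both arguments encode the same $\R^{>0}$-invariance.
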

\begin{proof}
Since $\Ga_A$ has a zero valent vertex $v_0$ or a uni-valent vertex $v_1$ adjacent to $v'_1$, $f^{\R^d}_{\Ga_A}:\ov{C}^\quo_{V(\Ga_A)}(\R^d)\to \ov{V}_{\Ga_A}$ factors through $f_{v_0}:\ov{C}^\quo_{V(\Ga_A)}(\R^d)\to \ov{C}^\quo_{V(\Ga_A)-\{v_0\}}(\R^d)$ which forgets the point labeled by $v_0$, or through $f_{v_1}:\ov{C}^\quo_{V(\Ga_A)}(\R^d)\to S^{d-1}\sx\ov{C}^\quo_{V(\Ga_A)-\{v_1\}}(\R^d)$ which forgets the distance between the point labeled by $v_1,v'_1$, respectively. So, $\dim_t(\ov{V}_{\Ga_A})<\dim(\ov{C}^\quo_{V(\Ga_A)}(\R^d))$. 
\end{proof}

\begin{lmm}\label{small_lmm}
$\dim_t(\ov\cS^\Ga_A-\rsa)\leq d|V(\Ga)|-2$ for all $A\subset\{\i\}\sqcup V(\Ga),|A|\ge2$.  
\end{lmm}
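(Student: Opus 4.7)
The plan is to decompose
\[
\ov\cS^\Ga_A - \rsa = \bigl(\ov\cS^\Ga_A - \cS^\Ga_A\bigr) \cup \bigl(\cS^\Ga_A - \rsa\bigr)
\]
and bound the covering dimension of each piece by $d|V(\Ga)|-2$ using Corollary~\ref{bundleA_crl} and Lemma~\ref{dim_lmm}. For $A$ of type 1 there is nothing new to prove: the preceding corollary already gives $\dim_t(\ov\cS^\Ga_A)\le d|V(\Ga)|-2$, and $\ov\cS^\Ga_A-\rsa$ is closed in $\ov\cS^\Ga_A$. So assume $A$ is of type 2, 3, or 4, in which case $\Ga_A$ has no uni- or zero-valent vertex and $E(\Ga_A)\neq\eset$.

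For the first piece, I would argue that any lift $\tilde z\in\ov\cS_A$ of a point $z\in\cS^\Ga_A$ must actually lie in the open stratum $\mr\cS_A$: a lift in a deeper stratum $\mr\cS_{A_1,A_2,\ldots}$ (a finer compatible coincidence pattern) would produce an extra collision or infinity-coincidence among non-$A$ labels that is visible in some coordinate of $f_\Ga(\tilde z)$, contradicting the defining inequalities of $\cS^\Ga_A$ in \eqref{clusterdef_eqn}. Therefore $\ov\cS^\Ga_A-\cS^\Ga_A\subseteq f_\Ga(\ov\cS_A-\mr\cS_A)$. The set $\ov\cS_A-\mr\cS_A$ is a finite union of closed boundary strata of the manifold-with-corners $\ov\cS_A$, each of real dimension at most $d|V(\Ga)|-2$, so Lemma~\ref{dim_lmm} applied to each, together with the closed-union property of covering dimension, yields the bound.

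For the second piece, I would use the bundle structure of Corollary~\ref{bundleA_crl}: on each fiber, $f_\Ga|_{\ov\cS_A}$ acts as $f^{\R^d}_{\Ga_A}$. Near $\tilde z\in\mr\cS_A$, the ambient $\ccM{V(\Ga)}$ is a manifold with boundary whose boundary is $\mr\cS_A$; the inward normal direction encodes the cluster size in $M$ and is faithfully recorded by the distance-to-boundary coordinate of $f_e$ for any $e\in E(\Ga_A)$, while the tangential behavior of $f_\Ga$ is controlled by $df^{\R^d}_{\Ga_A}$. Consequently, if $df^{\R^d}_{\Ga_A}$ is injective at the fiber coordinate of $\tilde z$, then $f_\Ga$ is a local homeomorphism at $\tilde z$ as maps of manifolds with boundary. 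Lemma~\ref{regularvalueunique_lmm} applied to the fiber forces any other preimage of $z$ sharing the same base coordinate to have rank-deficient fiber differential (and different base coordinates are ruled out by looking at $f_e$ for edges with a non-$A$ endpoint), so any such other preimage contributes only a lower-dimensional subset locally and cannot destroy the half-space structure at $z$. Hence $z\in\rsa$. The contrapositive places $\cS^\Ga_A-\rsa$ inside $f_\Ga(R)$, where $R\subseteq\ov\cS_A$ is the closed locus on which the rank of $d(f_\Ga|_{\ov\cS_A})$ is at most $d|V(\Ga)|-2$; Lemma~\ref{dim_lmm} with $r=d|V(\Ga)|-2$ gives $\dim_t f_\Ga(R)\le d|V(\Ga)|-2$.

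The main obstacle is the local-homeomorphism criterion: one must check, in the explicit Fulton-MacPherson coordinates near $\mr\cS_A$, that injectivity of $df^{\R^d}_{\Ga_A}$ on the fiber really does upgrade to $f_\Ga$ being a local homeomorphism of manifolds with boundary, uniformly over the three types of $A$ including the $\i\in A$ case (where $\Ga_A=\Ga/(A-\i)$ and the ``cluster'' escapes to infinity rather than collapsing at a point, so that ``cluster size'' must be reinterpreted as the reciprocal of the scaling parameter in the blow-up at $\i$). Once this is in place, the rest is bookkeeping with Lemma~\ref{dim_lmm} and the closed-union property.
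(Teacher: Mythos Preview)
Your overall strategy---use Corollary~\ref{bundleA_crl} and Lemma~\ref{regularvalueunique_lmm} to show that points of $\ov\cS^\Ga_A$ with a full-rank fiber lift lie in $\rsa$, then bound the rest via Lemma~\ref{dim_lmm}---is exactly the paper's. But your decomposition and the logic around it have a real gap.

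For the first piece, the claim you argue (``every lift of $z\in\cS^\Ga_A$ lies in $\mr\cS_A$'') is both unnecessary and false. It is unnecessary because $\ov\cS^\Ga_A-\cS^\Ga_A\subseteq f_\Ga(\ov\cS_A-\mr\cS_A)$ follows immediately from the trivial inclusion $f_\Ga(\mr\cS_A)\subseteq\cS^\Ga_A$. It is false because a deeper stratum can sit \emph{inside} the $A$-cluster: if $A'\subsetneq A$ with $|A'|\ge 2$, a point of $\mr\cS_{A,A'}$ still has all $A$-labels coinciding at a single $x\in M$ and all non-$A$ labels distinct, so its image satisfies every defining condition of $\cS^\Ga_A$. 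Your justification ``an extra collision among non-$A$ labels'' simply misses this case.

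This false claim is precisely what you implicitly rely on in the second piece. You assert that, at a full-rank lift $\tilde z\in\mr\cS_A$, the map $f_\Ga$ is a local homeomorphism onto a half-space, and that ``any such other preimage contributes only a lower-dimensional subset locally and cannot destroy the half-space structure.'' But $\rsa$ is defined by the local structure of $\ccM\Ga$ near $z$, which is the \emph{union} of the images of neighborhoods of \emph{all} preimages of $z$ under $f_\Ga$. A preimage $\tilde z'\in\mr\cS_{A,A'}$ sits in a codimension-$2$ corner of $\ccM{V(\Ga)}$, and there is no argument given that $f_\Ga$ of a neighborhood of $\tilde z'$ lands inside the half-space coming from $\tilde z$; rank deficiency at a single point does not force the nearby image to be lower-dimensional. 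The paper sidesteps this entirely: instead of working on $\cS^\Ga_A$, it shows
\[
\ov\cS^\Ga_A \setminus \bigl(f_\Ga(\ov\cS_A-\mr\cS_A)\ \cup\ (\ov\cS^\Ga_A-R_A)\bigr)\ \subset\ \rsa,
\]
where $R_A=f_\Ga(\ov\cS_A-Z_A)$ and $Z_A$ is the fiberwise rank-deficient locus. The extra hypothesis $z\notin f_\Ga(\ov\cS_A-\mr\cS_A)$ guarantees the full-rank lift is the \emph{only} lift, so $f_\Ga$ is a local homeomorphism onto a neighborhood of $z$ in $\ccM\Ga$ by a proper-map argument. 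Both excluded sets are closed of $\dim_t\le d|V(\Ga)|-2$, and the lemma follows. You should adopt this decomposition; your current one cannot be completed without an additional (and nontrivial) argument controlling images near corner preimages.
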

\begin{proof}
Denote by $p_A\!:\!\ov\cS_A\to\ov C_{V(\Ga/A)}(M,\i),p^\Ga_A\!:\!\ov\cS^\Ga_A\to\ccM{\Ga/A}$ the fiber bundles in Fact \ref{bundleGaA_lmm} and Corollary~\ref{bundleA_crl}, respectively.
Denote 
$$Z_A=\{x\in\ov\cS_A\,\big|\,\tn{rank}\ d_x(f_\Ga|_{p_A^{-1}(p_A(x))})<d|V(\Ga_A)|-d-1\},\quad R_A=f_\Ga(\ov{\cS}_A-Z_A).$$
(Notice that $p_A^{-1}(p_A(x))$ is just the fiber of $p_A$ containing $x$.) In other words, $Z_A$ consists of points $x\in\ov\cS_A$ at which $f_\Ga|_{p_A^{-1}(p_A(x))}$ is not an immersion. 
$\ov\cS^\Ga_A-R_A$ is also a fiber bundle over $\ov{C}_{\Ga/A}(M,\i)$ with fiber
$$f^{\R^d}_{\Ga_A}\big(\{x\in\ov{C}^\quo_{V(\Ga_A)}(\R^d)\,\big|\,\tn{rank}\ d_x f^{\R^d}_{\Ga_A}<d|V(\Ga_A)|-d-1\}\big),$$
so $\dim_t(\ov\cS^\Ga_A\!-\!R_A)\leq d|V(\Ga)|\!-\!2$ by Lemma \ref{dim_lmm}, and $\ov\cS^\Ga_A\!-\!R_A$ is closed in $\ov\cS^\Ga_A$. 
Since $\ov\cS_A\!-\!\mathring{\cS}_A$ is covered by codimension-2 or higher strata of $\ccM{V(\Ga)}$, $\dim_t\big(f_\Ga(\ov\cS_A\!-\!\mathring{\cS}_A)\big)\leq d|V(\Ga)|\!-\!2$. We claim that
\begin{equation}\label{regvalue_eqn}
\ov\cS^\Ga_A-\big(f_\Ga(\ov\cS_A\!-\!\mathring{\cS}_A)\cup(\ov\cS^\Ga_A\!-\!R_A)\big)\subset\rsa.
\end{equation}
If this is true, then $\ov\cS^\Ga_A-\rsa$ is contained in the union of two closed subsets of $\dim_t\le d|V(\Ga)|-2$; and since $\ov\cS^\Ga_A-\rsa$ is itself closed in $\ov\cS^\Ga_A$ (that $\rsa$ is open in $\ov\cS^\Ga_A$ follows easily from the definition of $\rsa$), we are done. Let $x$ be in the LHS of \eref{regvalue_eqn}. 
Since $x\in R_A$ and $x\notin f_\Ga(\ov{\cS}_A\!-\!\mathring\cS_A)$, by Lemma \ref{regularvalueunique_lmm}, $f_\Ga^{-1}(x)$ consists of a single element $y\in\mathring{\cS}_A$. 
We next show that $f_\Ga$ is a homeomorphism onto its image in a neighborhood $U\!\subset\!\ov{C}_{V(\Ga)}(M,\i)$ of $y$. 
Since $p_A(\mathring\cS_A)\!=\!C_{V(\Ga/A)}(M,\i)\!=\!C_{\Ga/A}(M,\i)$, $f_\Ga|_{\mathring\cS_A}$ is locally the product of $f^{\R^d}_{\Ga_A}$ with a diffeomorphism. So $f_\Ga|_{\mathring\cS_A}$ is an immersion at $y$. Since $f_\Ga|_{C_{V(\Ga)}(M,\i)}$ is injective, $f_\Ga$ is injective in an open neighborhood $U\!\subset\!\ccM{V(\Ga)}$ of $y$. Since $f_\Ga$ is a closed map ($\ccM{V(\Ga)}$ is compact), $f_\Ga|_{U}$ is a homeomorphism onto $f_\Ga(U)$. Since $f_\Ga(\ccM{V(\Ga)}-U)$ is a closed subset of $\ccM{\Ga}$ not containing $x$, there is a neighborhood $V\subset\ccM{\Ga}$ of $x$ such that $V\cap f_\Ga(\ccM{V(\Ga)}-U)=\eset$, so $V\subset f_\Ga(U)$. This shows that $\ccM{\Ga}$ has the structure of a topological manifold with boundary in the neighborhood $V$ of $x$, completing the proof. 
\end{proof}
\begin{crl}\label{dimT12_crl}
$\dim_t(T_2)\le d|V(\Ga)|-2,\ \dim_t(T_1)\le d|V(\Ga)|-1$.
\end{crl}


\begin{lmm}\label{refinecover_lmm}
Let $Y$ be a compact metrizable space with  $\dim_t(Y)=n$,
and $Y_2\subset Y_1\subset Y$ be closed subspaces such that $\dim_t(Y_1)\leq n-1$,$\dim_t(Y_2)\leq n-2$. Then, for every open cover $\cU$ of $Y$, there exists a refinement $\cU'$ of $\cU$ such that 
\begin{itemize}
\vspace{-.5cm}
\item[$(*)$]
there are open neighborhoods $N_{Y_1}$ of $Y_1$, $N_{Y_2}$ of $Y_2$, such that for all $U_0,\ldots,U_n\in\cU'$, pairwise distinct,
$$(U_0\cap\ldots\cap U_n)\cap N_{Y_1}=\eset,\quad(U_0\cap\ldots\cap U_{n-1})\cap N_{Y_2}=\eset.$$
\end{itemize}
\vspace{-.5cm}
Hence, if $S\subset Y_1$ is a closed subset such that $Y_1-Y_2\subset S$, then there are canonical isomorphisms 
$$H^n(Y,S)\approx H^n(Y,Y_1)\approx H^n_c(Y-Y_1).$$
\end{lmm}
\begin{proof}
The proof of \cite[Lemma 21.2.1]{KM} goes through almost verbatim here and gives us the first statement. (We first use \cite[Proposition 12-9 (1)(3)]{DT} where $Y_1,Y_2$ are plugged in as $C_1,C_2$, and then use \cite[Proposition 9-3]{DT}.) 
The second statement easily follows using standard arguments in \v{C}ech cohomology. For the first isomorphism: the restriction map $\ch{C}^i_\cU(Y,Y_1)\to\ch{C}^i_\cU(Y,S)$ is an equality for all $i\ge n-1$ and all open covers $\cU$ of $Y$ satisfying $(*)$, so $\varinjlim_{\cU\tn{ satisfying }(*)}H^n_\cU(Y,Y_1)=\varinjlim_{\cU\tn{ satisfying }(*)}H^n_\cU(Y,S)$; since every open cover of $Y$ has a refinement satisfying $(*)$, these two limits are equal to $H^n(Y,Y_1)$, $H^n(Y,S)$, respectively. 
For the second isomorphism: this follows from \cite[Proposition 12-3]{Bredon}. Alternatively, let $\Phi$ be the collection of compact subsets of $Y\!-\!Y_1$, viewed as subsets of $Y$, and it is not hard to check directly that the natural restriction maps $H^n_\Phi(Y)\to H^n_c(Y-Y_1)$, $H^n_\Phi(Y)\to H^n(Y,Y_1)$ are isomorphisms. 
\end{proof}
\begin{crl}\label{StoT_crl}
$H^{d|V(\Ga)|}(\tcM,S)\approx H^{d|V(\Ga)|}(\tcM,T_1)\approx H^{d|V(\Ga)|}_c(\tcM-T_1)$ via $\cG$-equivariant isomorphisms. 
\end{crl}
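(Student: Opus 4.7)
The corollary is a direct application of Lemma \ref{refinecover_lmm} to the triple $(Y, Y_1, Y_2) = (X_\Ga, T_1, T_2)$ with $n = d|V(\Ga)|$, and the closed set $S$ already in play; the only real work is to (i) check the hypotheses of that lemma and (ii) upgrade the two resulting isomorphisms to $\cG$-equivariant ones. My plan is to do (i) in one step using what has already been established, and then argue (ii) by inspecting the naturality of the two maps produced by the lemma.

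For (i): $X_\Ga$ is compact because it is a finite union of the sets $\phi(\si)(\ov{C}_\Ga(M,\i))$, each of which is the image of the compact space $\ov C_{V(\Ga)}(M,\i)$ under a continuous map into the compact space $\ov C_2(M,\i)^{E(\Ga)}$; being a subspace of a metrizable space, $X_\Ga$ is metrizable. Closedness of $T_1$ and $T_2$ was recorded immediately after Definition \ref{strata_dfn}, and the dimension bounds $\dim_t(T_1) \le d|V(\Ga)|-1$ and $\dim_t(T_2) \le d|V(\Ga)|-2$ are exactly the corollary stated after Lemma \ref{small_lmm}. Finally, the inclusion $T_1 - T_2 \subset S$ is the elementary rewriting of $T_2 \supset T_1 - S$ (also noted right after Definition \ref{strata_dfn}), which in turn follows because a point of $T_1$ not contained in any type~3 stratum (i.e.\ not in $S$) must fail to belong to some $\rrsa$ with $A$ of type~2 or 4, hence lies in some $\ov\cS_{A'}^\Ga - \rsa[A']$ and so in $T_2$. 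All hypotheses of Lemma \ref{refinecover_lmm} are met, giving
\[
H^{d|V(\Ga)|}(X_\Ga,S) \;\approx\; H^{d|V(\Ga)|}(X_\Ga,T_1) \;\approx\; H^{d|V(\Ga)|}_c(X_\Ga - T_1).
\]

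For (ii): the first isomorphism is induced by the inclusion $\ch C^*_\cU(X_\Ga, T_1) \hookrightarrow \ch C^*_\cU(X_\Ga, S)$ of cochains vanishing on $T_1$ into those vanishing on $S$, combined with the observation from the proof of Lemma \ref{refinecover_lmm} that this inclusion is an equality in degree $n$ once $\cU$ satisfies condition $(*)$. Since $\cG$ acts on $X_\Ga$ by homeomorphisms preserving $S$, $T_1$, and $T_2$, and the cofinal family of covers satisfying $(*)$ is preserved under the action $\cU \mapsto g\cdot\cU$ (the conditions on $T_1$ and $T_2$ only refer to those invariant subsets), the induced maps on direct limits over all admissible covers commute with the $\cG$-pullback, so the first isomorphism is $\cG$-equivariant. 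The second isomorphism is the excision/compact-support identification $H^n(X_\Ga, T_1) \approx H^n_c(X_\Ga - T_1)$, coming from the restriction map on sheaves $R_{X_\Ga - T_1} \to R|_{X_\Ga - T_1}$; this is clearly natural under any homeomorphism of $X_\Ga$ that preserves $T_1$, hence $\cG$-equivariant as well.

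The only point requiring care is to confirm that ``satisfies $(*)$'' is a $\cG$-stable notion on covers, so that passing to the direct limit genuinely produces a $\cG$-module map; this is immediate from the $\cG$-invariance of $T_1$ and $T_2$, and nothing else in the argument is nontrivial. I do not expect any serious obstacle here — the corollary is essentially packaging what has already been proved in this subsection.
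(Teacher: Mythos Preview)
Your proposal is correct and is exactly the paper's approach: the corollary is stated immediately after Lemma~\ref{refinecover_lmm} without a separate proof, so the intended argument is precisely the one you give---apply that lemma with $(Y,Y_1,Y_2)=(X_\Ga,T_1,T_2)$ and $n=d|V(\Ga)|$, using the dimension bounds and the inclusion $T_2\supset T_1-S$ already recorded after Definition~\ref{strata_dfn}, and then observe that the resulting isomorphisms are natural under the $\cG$-action since $S$, $T_1$, $T_2$ are $\cG$-invariant. Your explicit check of the hypotheses and the equivariance discussion simply fill in what the paper leaves implicit.
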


The goal of the rest of Section \ref{structureX_sec} is to construct a non-trivial map $\rho: H^{d|V(\Ga)|}_c(X_\Ga-T_1)\to R$, and the method is to realize $X_\Ga-T_1$ as the image of a proper map from an oriented topological manifold of dimension $d|V(\Ga)|$. 
An alternative approach for constructing $\rho$ is Section \ref{XGa_sec} (which we will only sketch), and it is a better and more canonical approach. But the method here is less technical and easier to write, so we use it instead. 

Recall at the end of Section~\ref{fixagraph_sec} we defined, for every $A\subset\{\i\}\sqcup V(\Ga)$ of type~2, $\si_A\in\tS_{E(\Ga)}$, and for every $\Ga$-pair $A_1,A_2\in E(\Ga)$, $\si_{A_1A_2}\in\tS_{E(\Ga)}$. Thus, we have $\phi(\si_A),\phi(\si_{A_1A_2}):\ccM{2}^{E(\Ga)}\lra\ccM{2}^{E(\Ga)}$ as in Definition \ref{Saction_dfn}. 


\begin{lmm}\label{cancel_lmm}
$\phi(\si_A)(\ov{\cS}^\Ga_A)=\ov{\cS}^\Ga_A,\ 
\phi(\si_{A_1A_2})(\ov{\cS}^\Ga_{A_1})=\ov{\cS}^\Ga_{A_2}$. 
\end{lmm}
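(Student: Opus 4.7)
My approach is to use the fiber bundle description of Corollary~\ref{bundleA_crl} --- each $\ov{\cS}^\Ga_A$ is a fiber bundle over $\ov{C}_{\Ga/A}(M,\i)$ with fiber $\ov{V}_{\Ga_A}$ --- and to exhibit the relevant $\phi(\si)$ as a bundle map of the expected kind. The splitting $E(\Ga) = E(\Ga_A) \sqcup E(\Ga/A)$ lets me view the $E(\Ga/A)$-coordinates of a point $(z_e)_{e \in E(\Ga)} \in \ov{\cS}^\Ga_A \subset \ccM{2}^{E(\Ga)}$ as base data and the $E(\Ga_A)$-coordinates as fiber data.

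For the first equality I would first observe that $\si_A$ is the identity on $\{e^\pm : e \in E(\Ga) \setminus \{e^1_A, e^2_A\}\}$ and $e^1_A, e^2_A \in E(\Ga_A)$, so $\phi(\si_A)$ fixes every $E(\Ga/A)$-coordinate and therefore acts trivially on the base $\ov{C}_{\Ga/A}(M,\i)$. To show it also preserves each fiber $\ov{V}_{\Ga_A}$, I plan to lift $\phi(\si_A)|_{\ov{V}_{\Ga_A}}$ along $f^{\R^d}_{\Ga_A}: \ov{C}_{V(\Ga_A)}(\R^d) \to \ov{V}_{\Ga_A}$: letting $v_1, v_2 \in V(\Ga_A)$ denote the other endpoints of $e^1_A, e^2_A$ (possibly $v_1 = v_2$), I define a partial self-map $\psi_A$ on $C_{V(\Ga_A)}(\R^d)$ by $x_{v_A} \mapsto x_{v_1} + x_{v_2} - x_{v_A}$ and keeping all other marked points fixed --- reflection of $x_{v_A}$ through the midpoint of $x_{v_1}$ and $x_{v_2}$. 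Since $v_A$ is bivalent in $\Ga_A$, no other edge is incident to $v_A$, so $\psi_A$ only changes the direction vectors $d_{e^1_A}, d_{e^2_A}$; a direct computation in each of the two subcases defining $\si_A$ --- both edges starting (or both ending) at $v_A$, which produces a swap-with-antipode, and mixed orientations, which produces a plain swap --- matches the induced change in $(d_{e^1_A}, d_{e^2_A})$ with the action of $\phi(\si_A)$. Density of $f^{\R^d}_{\Ga_A}(C_{V(\Ga_A)}(\R^d))$ in $\ov{V}_{\Ga_A}$, together with continuity of $\phi(\si_A)$ and closedness of $\ov{V}_{\Ga_A}$, then yields $\phi(\si_A)(\ov{V}_{\Ga_A}) \subseteq \ov{V}_{\Ga_A}$, and involutivity of $\si_A$ upgrades this to equality. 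The step I expect to be the most error-prone is the sign bookkeeping, pairing the antipode $\tau$ on each $\ccM{2}$-factor with the sign change produced by reflecting $x_{v_A}$.

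For the second equality, the graph isomorphism $\al_{A_1 A_2}: \Ga/A_1 \to \Ga/A_2$ records a bijection $E(\Ga)\setminus\{e_1\} \to E(\Ga)\setminus\{e_2\}$ together with the data of which edges have their directions reversed; combined with $e_1 \mapsto e_2$, this is exactly the permutation underlying $\si_{e_1 e_2}$. The corresponding coordinate permutation with $\tau$-flips sends $\ov{C}_{\Ga/A_1}(M,\i) \subset \ccM{2}^{E(\Ga) \setminus \{e_1\}}$ homeomorphically onto $\ov{C}_{\Ga/A_2}(M,\i) \subset \ccM{2}^{E(\Ga) \setminus \{e_2\}}$, so $\phi(\si_{e_1 e_2})$ carries the base of the $A_1$-bundle onto the base of the $A_2$-bundle. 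On fibers, each $\Ga_{A_i}$ is a single edge between two vertices, so $\ov{V}_{\Ga_{A_i}} = S^{d-1}$; and since $\si_{e_1 e_2}(e_1^\pm) = e_2^\pm$ preserves the sign, $\phi(\si_{e_1 e_2})$ acts as the identity $S^{d-1} \to S^{d-1}$ between the two sphere fibers. Combining the base and fiber descriptions, $\phi(\si_{e_1 e_2})$ is a bundle isomorphism $\ov{\cS}^\Ga_{A_1} \xrightarrow{\sim} \ov{\cS}^\Ga_{A_2}$, which is the claimed equality.
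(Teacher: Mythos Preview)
Your proof is correct and uses essentially the same mechanism as the paper: the midpoint reflection of the bivalent marked point for type~2 (Kontsevich's involution), and the edge permutation induced by the graph isomorphism $\al_{A_1A_2}$ for type~4. The only organizational difference is that the paper lifts these maps to the full Fulton--MacPherson space $\ccM{V(\Ga)}$ (defining $\phi'_A$ on a dense open $\mathring\cS'_A\subset\mathring\cS_A$ and $\phi'_{A_1A_2}$ on $\mathring\cS_{A_1}$) and then pushes down via the intertwining relation $\phi(\si_\bullet)\circ f_\Ga=f_\Ga\circ\phi'_\bullet$, whereas you work directly on the fiber $\ov V_{\Ga_A}$ of the bundle from Corollary~\ref{bundleA_crl}; since the reflection is $GL(d)$-equivariant, your fiberwise computation is compatible with the $SL(d)$ structure group, so both routes are equivalent.
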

\begin{proof}
Let $A$ be of type~2 with chosen bivalent vertex $v_A$, and vertices $v^1_A,v^2_A$ adjacent to it. 
There is a dense open subset $\mathring{\cS}'_A\subset\mathring\cS_A$ on which we can define an involution $\phi'_A:\mathring\cS'_A\to\mathring\cS'_A$ which fixes all other marked points and reflects the point labeled by $v_A$ along the mid-point of the line segment between the points labeled by $v^1_A,v^2_A$, on the screen that these marked points lie on. (This argument is in Kontsevich's original paper \cite[Lemma~2.1]{Kontsevich}; see e.g.  \cite[Figure~16]{WatanabeAdd} for a nice picture. Notice here we use $\mathring\cS'_A$ because $\phi'_A$ is not well-defined on $\mathring\cS_A$, due to cases when the new position of the point labeled by $v_A$ coincides with other points.) Clearly $\phi(\si_A)\circ f_\Ga=f_\Ga\circ\phi'_A$. So $$\phi(\si_A)(\ov{\cS}^\Ga_A)=\phi(\si_A)(f_\Ga(\ov\cS_A))=\ov{\phi(\si_A)(f_\Ga(\mathring\cS'_A))}= \ov{f_\Ga(\phi'_A(\mathring\cS'_A))}=\ov{f_\Ga(\mathring\cS'_A)}=\ov{\cS}^\Ga_A.$$

Let $A_1,A_2$ be a $\Ga$-pair. Since for $A$ of type 4, $\mathring\cS_{A}$ is an $S^{d-1}$-bundle over $C_{V(\Ga/A)}(M,\i)$ with fiber over $x$ canonically identified with $ST_{f_{[A]_v}(x)}M$, 
we can define $\phi'_{A_1A_2}:\mathring\cS_{A_1}\to\mathring\cS_{A_2}$ by lifting the map $C_{V(\Ga/A_1)}(M)\to C_{V(\Ga/A_2)}(M)$ switching marked points in the same way as $\al_{A_1A_2}$ (defined by the end of Section~\ref{fixagraph_sec}) maps vertices of $\Ga/A_1$ to vertices of $\Ga/A_2$. Since $[A_1]_v$ is mapped to $[A_2]_v$, the fibers are canonically identified. By the definition of $\si_{A_1A_2}$, $\phi(\si_{A_1A_2})\circ f_\Ga=f_\Ga\circ\phi'_{A_1A_2}$. So 
$$\phi(\si_{A_1\!A_2})(\ov{\cS}^\Ga_{A_1})\!=\!\phi(\si_{A_1\!A_2})(f_\Ga(\ov\cS_{A_1}))\!=\!\ov{\phi(\si_{A_1\!A_2})(f_\Ga(\mathring\cS_{A_1}))}\!=\! \ov{f_\Ga(\phi'_{A_1\!A_2}(\mathring\cS_{A_1}))}\!=\!\ov{f_\Ga(\mathring\cS_{A_2})}\!=\!\ov{\cS}^\Ga_{A_2}.$$
\end{proof}


\begin{crl}
$\phi(\si_A)(\rrsa)=\rrsa,\ \phi(\si_{A_1A_2})(\ringring\cS^\Ga_{A_1})=\ringring\cS^\Ga_{A_2}.$
\end{crl}
Denote $$C'_\Ga(M,\i):=C_\Ga(M,\i)\cup\hspace{-.3cm}\bigcup_{A\tn{ of type 2 or 4}}\hspace{-.3cm}\rrsa\ \subset\ \ccM{\Ga}.$$ 
Then by the definition of $\rrsa$, $C'_{\Ga}(M,\i)$ is a topological manifold with boundary. 
\begin{dfn}\label{Xtl_dfn}
Take $2^{|E(\Ga)|}|E(\Ga)|!$ copies of $C'_\Ga(M,\i)$, labeled by elements in $\tS_{E(\Ga)}$. We write $C'_\Ga(M,\i)^{@\si}$ for the copy labeled by $\si\in\tS_{E(\Ga)}$, and similarly write $(\rrsa)^{@\si},x^{@\si}$, etc., for its subspaces and elements. We orient $C'_\Ga(M,\i)^{@\si}$ by twisting the orientation on $C_\Ga(M,\i)$ by $(-1)^{(d-1)\sgn(\si)+d\sgn'(\si)}$. 
Define
$$\wt{X}_\Ga:=\Big(\bigsqcup_{\si\in\tS_{E(\Ga)}}C'_{\Ga}(M,\i)^{@\si}\Big)\Big/\sim_\Ga,$$
where $\sim_\Ga$ is the following equivalence relation (gluing boundary components pairwise):
\begin{itemize}
\vspace{-.5cm}
\item $\forall \ A\subset\{\i\}\sqcup V(\Ga)$ of type~2, $\forall \ x\in\rrsa$, $\forall\ \si\in\tS_{E(\Ga)}$,
$x^{@\si}\sim_\Ga (\phi(\si_A)(x))^{@\si\si_A^{-1}}$;
\item $\forall\ A_1,A_2$ a $\Ga$-pair, $\forall\ x\in\ringring\cS^\Ga_{A_1}$, $\forall\ \si\in\tS_{E(\Ga)}$, 
$x^{@\si}\sim_\Ga (\phi(\si_{\!A_1\!A_2})(x))^{@\si\si_{\!A_1\!A_2}^{-1}}$. 
\end{itemize}
Moreover, define 
\begin{gather*}
\wt{f}=(\wt{f}_{e})_{e\in E(\Ga)}:\wt{X}_\Ga\lra\ccM{2}^{E(\Ga)},\\
\wt{f}|_{C'_\Ga(M,\i)^{@\si}}=\phi(\si)\circ f_\Ga.
\end{gather*}
It is well-defined since $\phi(\si\si_A^{-1})\circ\phi(\si_A)=\phi(\si),\,\phi(\si\si_{A_1A_2}^{-1})\circ\phi(\si_{A_1A_2})=\phi(\si)$.
\end{dfn}
It can be easily seen that $\tn{image}(\wt{f})=\tcM-T_1$. It follows from Lemma \ref{chambers_lmm} and the definition above that $\wt{f}|_{\bigsqcup_{\si}C_{\Ga}(M,\i)^{@\si}}$ is a covering map (onto its image) of degree $|\Au(\Ga)|^\pm_d$. 
Since $\phi(\si)$ is $\cG$-equivariant for all $\si\in\tS_{E(\Ga)}$, the diagonal $\cG$-action on $\ccM{2}^{E(\Ga)}$ lifts to an action of $\cG$ on $\wt{X}_\Ga$, so that $\wt{f}$ is equivariant. 
\begin{lmm}\label{XGa_lmm}
$\wt{X}_\Ga$ is an oriented topological manifold of dimension $d|V(\Ga)|$. 
\end{lmm}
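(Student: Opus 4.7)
The strategy has three stages: first, establish that each piece $C'_\Ga(M,\i)^{@\si}$ is a topological $d|V(\Ga)|$-manifold with boundary $\bigsqcup_{A\text{ of type 2 or 4}}(\rrsa)^{@\si}$; second, verify that $\sim_\Ga$ glues these boundary components pairwise by homeomorphisms, yielding a topological manifold without boundary; third, check that the sign twists on the pieces are compatible across every gluing, producing a global orientation. The orientation bookkeeping in the third stage is the main technical obstacle.

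For the first stage, $C_\Ga(M,\i)$ is an open manifold of dimension $d|V(\Ga)|$ since the embedding $f_\Ga|_{C_{V(\Ga)}(M,\i)}$ is injective by connectedness of $\Ga$. By Definition \ref{ringGa_dfn}, at each point of $\rrsa$ the space $\ccM{\Ga}$ has a local half-space chart along $\rsa$ with no other boundary strata approaching, and $C_\Ga$ locally fills in the interior of this half-space; moreover $\rrsa$ and $\rrs{A'}$ are disjoint for distinct $A,A'$ (since $\cS^\Ga_A\cap\cS^\Ga_{A'}=\eset$). Hence $C'_\Ga(M,\i)$ is a topological manifold with boundary $\bigsqcup_{A\text{ of type 2 or 4}}\rrsa$. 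For the second stage, each $\si_A\in\tS_{E(\Ga)}$ is an involution by direct case check on its definition, and Lemma \ref{cancel_lmm} makes $\phi(\si_A)|_{\rrsa}$ a self-homeomorphism, so the relation $x^{@\si}\sim_\Ga(\phi(\si_A)(x))^{@\si\si_A^{-1}}$ identifies $\rrsa^{@\si}$ with $\rrsa^{@\si\si_A^{-1}}$ homeomorphically, and a second application returns $x^{@\si}$. For a $\Ga$-pair $(A_1,A_2)$, the analogous identification via $\phi(\si_{A_1A_2})$ is involutive because $\si_{A_2A_1}=\si_{A_1A_2}^{-1}$. Thus every boundary component of some $C'_\Ga^{@\si}$ is glued to exactly one other via a homeomorphism, and a standard collar-neighborhood argument makes the quotient a topological $d|V(\Ga)|$-manifold without boundary.

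For orientation compatibility, at each gluing I must verify that the induced boundary orientations on the two pieces are opposite, so the orientations extend continuously. For a type~2 gluing, the involution $\phi'_A$ on the stratum (reflecting the $v_A$-marked point through the midpoint of its two neighbors on the screen, as in the proof of Lemma \ref{cancel_lmm}) contributes a definite orientation sign depending on $d$, which must match the parity shift of $(d-1)\sgn(\si)+d\sgn'(\si)$ under $\si\mapsto\si\si_A^{-1}$; this comes from $\sgn(\si_A)=-1$ together with the value of $\sgn'(\si_A)$ dictated by the three cases in the definition of $\si_A$. For a type~4 gluing, the graph isomorphism $\al_{A_1A_2}$ supplies sign data $\sgn(\al,\vertex),\sgn(\al,\to),\sgn(\al,\edge)$ whose combination is constrained by the graph-homology relation $[\de_{\bar d}(\Ga)]_{\sim_{\bar d}}=0$ so as to match the parity shift under $\si\mapsto\si\si_{A_1A_2}^{-1}$. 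The formula $(d-1)\sgn(\si)+d\sgn'(\si)$ is tailored to absorb these geometric signs -- reflection, factor transpositions, and edge-direction reversals -- case by case; the verification is routine but branches into several subcases, which is the main calculation to carry out.
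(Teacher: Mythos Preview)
Your proposal is correct and follows essentially the same approach as the paper's proof: establish that the pieces are manifolds with boundary, that $\sim_\Ga$ glues boundary components pairwise via homeomorphisms, and that the gluing maps are orientation-reversing with respect to the boundary orientations. The paper's proof is much terser---it simply asserts these facts in two sentences---whereas you spell out the intermediate verifications (involutivity of $\si_A$, disjointness of the $\rrsa$, the role of the graph-homology closedness in the type~4 sign check) that the paper leaves implicit under ``it is not difficult to verify.''
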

\begin{proof}
That it is a topological manifold of dimension $d|V(\Ga)|$ follows from that, $\phi(\si_A):\rrsa\to\rrsa$ and $\phi(\si_{A_1A_2}):\ringring\cS^\Ga_{e_1}\to\ringring\cS^\Ga_{e_2}$ are homeomorphisms and $\sim_\Ga$ glues together these boundary components of $\bigsqcup_{\si\in\tS_{E(\Ga)}}C'_{\Ga}(M,\i)^{@\si}$ pairwise. It is not difficult to verify that $\phi(\si_A),\phi(\si_{A_1A_2})$ are orientation-reversing, if $(\rrsa)^{@\si}$'s are oriented as boundaries of $C'_\Ga(M,\i)$. 
\end{proof}

The purpose of the following lemma is to show that $\wt{f}$ induces a map 
$$\wt{f}^*: H^{d|V(\Ga)|}_c(X_\Ga-T_1)\lra H^{d|V(\Ga)|}_c(\wt X_\Ga).$$

\begin{lmm}\label{ftlproper_lmm}
$\wt{f}:\wt{X}_\Ga\lra X_\Ga-T_1$ is a proper map. 
\end{lmm}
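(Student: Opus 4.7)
The strategy is to show that for every compact $K\subset\tcM-T_1$, the preimage $\wt f^{-1}(K)\subset\wt X_\Ga$ is compact, by realizing it as the image under a continuous quotient map of a closed subset of the compact ambient space $\bigsqcup_\si\ccM{\Ga}^{@\si}$. Let $q:\bigsqcup_{\si\in\tS_{E(\Ga)}}C'_\Ga(M,\i)^{@\si}\to\wt X_\Ga$ be the quotient map defining $\wt X_\Ga$. Unwinding Definition~\ref{Xtl_dfn}, a straightforward check gives $\wt f^{-1}(K)=q(P)$ where
\begin{equation*}
P:=\bigsqcup_{\si\in\tS_{E(\Ga)}}\bigl((\phi(\si)\circ f_\Ga)^{-1}(K)\cap C'_\Ga(M,\i)^{@\si}\bigr)\subset\mathcal{C}:=\bigsqcup_{\si}\ccM{\Ga}^{@\si}.
\end{equation*}
Since $q$ is continuous and $\mathcal{C}$ is compact (a finite disjoint union of copies of the compact $\ccM{\Ga}$), it suffices to show that $P$ is closed in $\mathcal{C}$.

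I would take a convergent sequence $\bar y_n\to\bar y$ with $\bar y_n\in P$; using finiteness of $\tS_{E(\Ga)}$ I may pass to a subsequence lying in a single summand $C'_\Ga(M,\i)^{@\si}$, with limit $\bar y\in\ccM{\Ga}^{@\si}$. Continuity then gives $\phi(\si)(\bar y)=\lim\phi(\si)(\bar y_n)\in K$. The heart of the proof is to verify $\bar y\in C'_\Ga(M,\i)$. I argue by contradiction: suppose $\bar y\in\ccM{\Ga}-C'_\Ga(M,\i)$. Then $\bar y\in\ov\cS^\Ga_A$ for some $A\subset\{\i\}\sqcup V(\Ga)$ with $|A|\ge2$, and (if $A$ is of type 2 or 4) $\bar y\notin\rrsa$. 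I will show in each case that $\bar y\in T_1$; since $T_1$ is $\tS_{E(\Ga)}$-invariant under $\phi$, this forces $\phi(\si)(\bar y)\in T_1$, contradicting $K\cap T_1=\eset$.

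For $A$ of type 1, the remark after Definition~\ref{strata_dfn} gives $\rsa=\eset$, hence $\ov\cS^\Ga_A=\ov\cS^\Ga_A-\rsa$ is contained in $T_2\subset T_1$ by the definition of $T_2$. For $A$ of type 2 or 4, I decompose $\ov\cS^\Ga_A-\rrsa=(\ov\cS^\Ga_A-\rsa)\cup(\rsa-\rrsa)$: the first summand is in $T_2$ by definition; by Definition~\ref{ringGa_dfn} the second lies in $\bigcup_{\si',A'}\phi(\si')(\ov\cS^\Ga_{A'}-\mr{\cS}^\Ga_{A'})$, which is again contained in $T_2$. The remaining case $A$ of type 3, where $\bar y\in S$, is the main obstacle: it requires the inclusion $S\subset T_1$. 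I expect this to follow from a direct unwinding of Definition~\ref{cluster_dfn}: a type-3 configuration either has $\i$ sitting in a nontrivial cluster or has all of $V(\Ga)$ coalescing, and since each $\phi(\si')$ only permutes and reverses edge coordinates of $\ccM{2}^{E(\Ga)}$ without altering the underlying pattern of coinciding marked points, such a configuration cannot lie in $\phi(\si')(C_\Ga(M,\i))$ nor in $\phi(\si')(\rrs{A'})$ for any $\si'$ and any $A'$ of type 2 or 4 (all of whose members exhibit at most a single mild, non-$\i$ coincidence).

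The principal obstacle is thus the verification $S\subset T_1$ in the type-3 case; the type-1, 2, 4 cases reduce essentially by inclusion to $T_2\subset T_1$, and the overall compactness conclusion follows formally from $P$ being closed in the compact space $\mathcal{C}$.
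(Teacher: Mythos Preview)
Your approach is essentially the same as the paper's: both reduce to showing that a limit point $\bar y$ of a sequence in $C'_\Ga(M,\i)^{@\si}$ with $\phi(\si)(\bar y)\in K$ must still lie in $C'_\Ga(M,\i)$, and both do this by arguing that $\ccM{\Ga}-C'_\Ga(M,\i)$ lands in $T_1$ under $\phi(\si)$. The paper compresses the entire case analysis into the single assertion ``$\phi(\si)(\ov{\cS}^\Ga_A-\rrsa)\subset T_1$ for any $A$'' and then concludes $y\in\phi(\si)(C'_\Ga(M,\i))$; you unpack this into types 1--4 and correctly isolate the type-3 case (equivalently, the inclusion $S\subset T_1$) as the nontrivial step. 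Your heuristic for that inclusion is sound: for type-3 $A$ one has either $\i\in A$ (forcing some marked point to sit at $\i$, which never happens in $\phi(\si')(C'_\Ga(M,\i))$) or $A=V(\Ga)$ (forcing all marked points to coincide, which cannot happen in $\phi(\si')(\rrs{A'})$ for $A'$ of type 2 or 4 since then $|A'|<|V(\Ga)|$). So your proposal is correct; you have simply made explicit a verification that the paper leaves to the reader.
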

\begin{proof}
Let $K\subset X_\Ga-T_1$ be compact. To show $\wt{f}^{-1}(K)$ is compact, let $\{x_n\}_{n=1}^\i$ be a sequence of points in $\wt{f}^{-1}(K)$. There is a subsequence (still call it $\{x_n\}$) and some $\sigma\in\wt{S}_{E(\Ga)}$ such that $x_n\in C'_\Ga(M,\i)^{@\si}$ for all $n$. After possibly passing to a subsequence, $\wt{f}(x_n)$ converges to some $y\in\phi(\si)\big(\ov{C}_\Ga(M,\i)\big)\cap K$, since $\wt{f}(x_n)\in\phi(\si)(C'_\Ga(M,\i))$ and $\ov{C}_\Ga(M,\i)$ is closed. But $y\notin\phi(\si)(\ov{\cS}^\Ga_A-\rrsa)$ for any $A$, since $\phi(\si)(\ov{\cS}^\Ga_A-\rrsa)\subset T_1$. Thus, $y\in\phi(\si)(C'_\Ga(M,\i))$. Since $\wt{f}$ maps $C'_\Ga(M,\i)^{@\si}$ homeomorphically onto $\phi(\si)(C'_\Ga(M,\i))$, by the definition of $C'_\Ga(M,\i)$, $\{x_n\}$ converges to the unique element $x\in \wt{f}^{-1}(y)\cap C'_\Ga(M,\i)^{@\si}$. 
\end{proof}
\begin{dfn}\label{rho_dfn}
Define $\rho:H^{d|V(\Ga)|}(X_\Ga,S;R)\lra R$ to be the composition
$$H^{d|V(\Ga)|}(X_\Ga,S)\xra{\tn{Crl }\ref{StoT_crl}} H^{d|V(\Ga)|}_c(X_\Ga-T_1)\xra{\wt{f}^*}H^{d|V(\Ga)|}_c(\wt X_\Ga)\lra R,$$
where the last arrow is by taking cap product with the fundamental class of $\wt{X}_\Ga$ (in the sense of Borel-Moore homology). 
\end{dfn}
Since the $\cG$-action on $\wt X_\Ga$ is orientation-preserving, the last map is $\cG$-equivariant (where $R$ is equipped with the trivial action). So, $\cG$ acts on all the objects involved in this definition and all maps involved are equivariant, implying that $\rho$ is $\cG$-equivariant. 

\begin{rmk}\label{graphsum_rmk}
If $\Ga$ is not a single graph but a formal sum of graphs, $\sum_{i=1}^m\Ga_i$, the arguments in Section \ref{space_sec} need to be modified as follows. 
First, note that if two graphs share a common boundary term, then they much have the same number of edges, therefore without loss of generality we can assume all the $\Ga_i$ have the same number of edges, say $n$. 
Then, let us fix, for each $\Ga_i$, a bijection between $E(\Ga_i)$ and $\{1,\ldots,n\}$. Since we will soon be summing over all permutations of edges, this choice does not matter. 
Our ``ambient space'' in this case will be $\ccM{2}^n$, in place of $\ccM{2}^{E(\Ga)}$ in the single-graph case. 
Now, for each $\Ga_i$, we can define $C_{\Ga_i}(M,\infty),\ov{C}_{\Ga_i}(M,\infty)\subset \ccM{2}^n$, just like in the single-graph case, using the chosen bijection. 
Similar to Lemma \ref{chambers_lmm}, it is easy to see that the elements in  $\{\phi(\sigma)(C_{\Ga_i}(M,\infty))\}_{i\in\{1,\ldots,m\},\sigma\in \wt{S}_{\{1,\ldots,n\}}}$ are disjoint from each other. 
We define 
$$X_{\sum_{i=1}^m\Ga_i}=\bigcup_{\sigma\in\wt{S}_{\{1,\ldots,n\}},i\in\{1,\ldots,m\}}\phi(\sigma)\big(\ov{C}_{\Ga_i}(M,\infty)\big)\subset\ccM{2}^n.$$
The rest of the arguments in the paper all generalize to this case in a straight-forward way. 
\end{rmk}

\subsection{Digression: what \texorpdfstring{$\tcM$}{X} looks like}\label{XGa_sec}

This subsection can be skipped. It is here to justify Figure \ref{X_fig}: after removing a codimension 2 subset from $X_\Ga$, it looks like a manifold with boundary and bindings, where each binding component is connected to an even number of pages, summing up to 0 when counted with sign. This statement would also allow us to define $\rho$ in a different (more canonical) way than in Section \ref{structureX_sec}.  
But working out everything precisely is quite technically involved, so we will only sketch such an approach in this subsection. 

By Lemma \ref{chambers_lmm}, 
$$\tcM= \bigsqcup_{[\si]\in\tS_{E(\Ga)}/\tn{image}(\psi_\Ga)}\phi(\si)\big(C_\Ga(M,\i)\big)\sqcup\bigcup_{\si\in\tS_{E(\Ga)}}\phi(\si)\big(\ccM{\Ga}-C_\Ga(M,\i)\big).$$
Denote the first term above by $\rxg$. 

First we define ``binding points'' (those $p$ in the definition below). 

\begin{dfn}\label{binding_dfn}
For $A\subset\{\i\}\sqcup V(\Ga),|A|\ge2$, define $\rrrsa\subset\cS^\Ga_A$ to be the set of points $p$ satisfying: there exists a neighborhood $U\subset \ccM{2}^{E(\Ga)}$ of $p$, such that for every $\si\in\tS_{E(\Ga)}$, either
\begin{enumerate} 
\vspace{-.5cm}
\itemsep0em 
\item[(1)] $\phi(\si)\big(\ccM{\Ga}\big)\cap U=\eset$, or 
\item[(2)]
$p=\si(q)$ for some $A'\subset\{\i\}\sqcup V(\Ga),|A'|\ge2$, $q\in\cS^\Ga_{A'}$, and 
\begin{enumerate}
\item[$\small\bu$] there is a homeomorphism 
$$\nu: U\cap\phi(\si)\big(\ccM{\Ga}\big)\lra \R^{N-1}\times\R^{\ge0}, \text{ s.t. }\nu\big(U\cap\phi(\si)(\cS^\Ga_{A'})\big)=\R^{N-1}\times\{0\}$$
(i.e., $\phi(\si)\big(\ccM{\Ga}\big)$ is a topological manifold with boundary near $p$);
\item[$\small\bu$] $U\cap\phi(\si)(\cS^\Ga_{A'})=U\cap\cS^\Ga_A.$
\end{enumerate}
\vspace{-.5cm}
\end{enumerate}
For $p\in\rrrsa$, define the {\it signed count of pages at $p$} to be the signed count of elements in $\tS_{E(\Ga)}$: those $\si$ of case (1) above are counted with 0; those $\si$ of case (2) above are counted with $\pm1$: $+1$ if the boundary orientations of $\cS^\Ga_A$ near $p$, as boundary of $\phi(\si)\big(\ccM{\Ga}\big)$ and as boundary of $\ccM{\Ga}$, agree; $-1$ if they disagree. 
\end{dfn}

\begin{lmm}
If $p\in\rrrsa$ where $A$ is of type 2 or 4, then the signed count of pages at $p$ is always 0. 
\end{lmm}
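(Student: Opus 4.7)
The plan is to exhibit, in each of the two cases, a sign-reversing bijection on the set of $\si \in \tS_{E(\Ga)}$ contributing a page at $p$. First I would fix a neighborhood $U$ of $p$ witnessing the binding-point condition from Definition \ref{binding_dfn}, and write $\Sigma \subset \tS_{E(\Ga)}$ for the subset of $\si$ falling under case~(2); for each $\si \in \Sigma$ denote by $\ep(\si) \in \{+1,-1\}$ its signed contribution. The goal is to produce a fixed-point-free involution (type~2) or bijection (type~4) of $\Sigma$ under which $\ep$ changes sign.

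For $A$ of type~2, I would use the map $\si \mapsto \si\,\si_A$, which is an involution of $\tS_{E(\Ga)}$ since $\si_A$ itself is one (as one checks in both sub-cases of its definition in Section \ref{fixagraph_sec}). Given $\si \in \Sigma$ with associated stratum $A''$ and $q = \si^{-1}(p) \in \cS^\Ga_{A''}$, I would argue that $\si\,\si_A \in \Sigma$ and $\ep(\si\,\si_A) = -\ep(\si)$. The first half rests on Lemma \ref{cancel_lmm}: since $\phi(\si_A)(\ov\cS^\Ga_A) = \ov\cS^\Ga_A$, the sheet
\[
\phi(\si\,\si_A)(\ccM{\Ga}) \;=\; \phi(\si)\bigl(\phi(\si_A)(\ccM{\Ga})\bigr)
\]
is again locally a topological half-space near $p$ with boundary $\cS^\Ga_A \cap U$, because $\phi(\si_A)$ carries a half-neighborhood of $q$ in $\ccM{\Ga}$ to a half-neighborhood of $\phi(\si_A)(q)$ in $\ccM{\Ga}$.

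For $A$ of type~4, let $A'$ be its $\Ga$-partner and use instead the bijection $\si \mapsto \si\,\si_{AA'}^{-1}$. Lemma \ref{cancel_lmm} now gives $\phi(\si_{AA'})(\ov\cS^\Ga_A) = \ov\cS^\Ga_{A'}$, and the analogous local half-space argument shows that this bijection sends $\Sigma$ to itself and pairs up contributing sheets. (Note that $\si_{AA'}$ need not be an involution, so one should verify that its square, when it acts on $\Sigma$, is accounted for correctly; this is not hard.)

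The principal technical point, and the hard part I expect, is the sign flip. By Definition \ref{binding_dfn}, $\ep(\si)$ compares two boundary orientations on $\cS^\Ga_A \cap U$: the one induced by the half-space structure on $\phi(\si)(\ccM{\Ga})$, and the one inherited from $\ccM{\Ga}$ itself. To deduce the required sign reversal I would invoke the observation used at the end of the proof of Lemma \ref{XGa_lmm}: $\phi(\si_A)$ (resp.\ $\phi(\si_{AA'})$) reverses the boundary orientation of $\ringring\cS^\Ga_A$ inside $\ccM{\Ga}$. Pushing that orientation reversal forward along $\phi(\si)$ then flips the sign in the comparison defining $\ep$. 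The main obstacle will be the careful orientation bookkeeping through the local chart $\nu$ in Definition \ref{binding_dfn}, ensuring that the sign conventions used there are consistent with the pushforward-of-orientation convention implicit in the proof of Lemma \ref{XGa_lmm}; once this dictionary is set up, both cases reduce to the same single-line orientation calculation.
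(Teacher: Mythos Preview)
Your overall strategy---pair up the contributing $\si$'s by a sign-reversing bijection built from $\si_A$ (type~2) or $\si_{AA'}$ (type~4)---is exactly what the paper's two-sentence sketch intends; the paper simply points to Lemma~\ref{cancel_lmm} for type~2 and to closedness in graph homology for type~4, in a subsection it explicitly flags as informal.

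There is, however, a real gap in your implementation. The sentence ``$\phi(\si_A)$ carries a half-neighborhood of $q$ in $\ccM{\Ga}$ to a half-neighborhood of $\phi(\si_A)(q)$ in $\ccM{\Ga}$'' is false: $\phi(\si_A)$ sends $\ccM{\Ga}$ to the \emph{different} sheet $\phi(\si_A)(\ccM{\Ga})$, not back into $\ccM{\Ga}$. Lemma~\ref{cancel_lmm} only says $\phi(\si_A)$ preserves the stratum $\ov\cS^\Ga_A$, not a half-neighborhood of it inside $\ccM{\Ga}$. Concretely, for $\si\in\Sigma$ whose associated stratum $A''$ differs from $A$, the point $\phi(\si\si_A)^{-1}(p)=\phi(\si_A)(q)$ has no reason to lie on any $\cS^\Ga_{A'''}$: the permutation $\si_A$ touches only the two edge-coordinates at $v_A$ and knows nothing about $A''$. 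So your right-multiplication by the \emph{fixed} $\si_A$ need not carry $\Sigma$ to itself.

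The natural repair is to let the pairing depend on $\si$: send $\si$ (with associated stratum $A''$) to $\si\,\si_{A''}$ in type~2, or to $\si\,\si_{A''A'''}^{-1}$ in type~4, where $A'''$ is the $\Ga$-partner of $A''$. Then Lemma~\ref{cancel_lmm} places $\phi(\si_{A''})(q)$ back on $\ov\cS^\Ga_{A''}$, the new sheet again bounds along $\cS^\Ga_A\cap U$ after applying $\phi(\si)$, and the orientation argument from Lemma~\ref{XGa_lmm} gives the sign flip. Checking that this is genuinely an involution (the associated stratum for $\si\si_{A''}$ is again $A''$) and that the half-space structure persists at $\phi(\si_{A''})(q)$ still needs care; the paper does not supply these details either.
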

\begin{proof}[Sketch of Proof]
For type 2 $A$s, the pages come in pairs of opposite signs, see Lemma \ref{cancel_lmm}; for type 4 $A$s, the pages sum up to 0 because $\Ga$ is closed in graph homology. 
\end{proof}


In this subsection, $\rrrsa$ replaces the role played by $\rrsa$ in the previous section. 
We define $S,T_1,T_2$ verbatim as in Definition \ref{strata_dfn}, just with $\rrsa$ replaced by $\rrrsa$. The statements in the paragraph below Definition \ref{strata_dfn} still hold.  
We next show that that analogues of Corollaries \ref{dimT12_crl},\ref{StoT_crl} still hold. 

\begin{lmm}\label{goodcohomology_lmm}
There is a $\cG$-equivariant surjective map $H^{d|V(\Ga)|}_c(X_\Ga-T_1;R)\to R$, where $\cG$ acts on $R$ trivially. 
\end{lmm}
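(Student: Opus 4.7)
The plan is to adapt the construction of $\wt{X}_\Ga$ and the map $\rho$ from Section \ref{structureX_sec} to the binding-point setup of this subsection, using the triple-ringed loci $\rrrsa$ in place of the double-ringed $\rrsa$. Compared with the formal construction, the key new point is that the map $\wt{f}$ will genuinely be many-to-one over the binding locus, with local multiplicities cancelling by the signed-page-count-zero property.

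Step 1. Set
\[
C''_\Ga(M,\i) := C_\Ga(M,\i)\cup\bigcup_{A\tn{ of type 2 or 4}}\rrrsa \ \subset\ \ccM{\Ga},
\]
take $2^{|E(\Ga)|}|E(\Ga)|!$ oriented copies $C''_\Ga(M,\i)^{@\si}$ labelled by $\si\in\tS_{E(\Ga)}$ (orientations twisted as in Definition \ref{Xtl_dfn}), and form the quotient $\wt X_\Ga$ by the equivalence relation $\sim_\Ga$ of Definition \ref{Xtl_dfn}, now extended verbatim across the triple-ringed loci via the involutions $\phi(\si_A)$ (type 2) and the bijections $\phi(\si_{A_1A_2})$ (type 4). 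Define $\wt f:\wt X_\Ga\to\ccM{2}^{E(\Ga)}$ by $\wt f|_{C''_\Ga(M,\i)^{@\si}}=\phi(\si)\circ f_\Ga$.

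Step 2. Verify that $\wt X_\Ga$ is an oriented topological manifold of dimension $d|V(\Ga)|$. In the interior of each $C''_\Ga(M,\i)^{@\si}$ this is clear. At a binding point $p\in\rrrsa$ for $A$ of type 2 or 4, Definition \ref{binding_dfn} guarantees that each $\si\in\tS_{E(\Ga)}$ either contributes nothing (case (1)) or contributes a local half-disk in $\phi(\si)(\ccM{\Ga})$ whose boundary is identified with $\cS^\Ga_A$ near $p$. The signed-page-count-zero lemma above then implies that these half-disks pair off (via the $\phi(\si_A)$ for type 2 and the $\Ga$-pairing $\phi(\si_{A_1A_2})$ for type 4) into pairs of opposite boundary orientation, so that the quotient under $\sim_\Ga$ is locally Euclidean with a consistent orientation.

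Step 3. Argue that $\wt f:\wt X_\Ga\to X_\Ga-T_1$ is proper by the same sequence argument as Lemma \ref{ftlproper_lmm}. Then define
\[
H^{d|V(\Ga)|}_c(X_\Ga-T_1;R)\xra{\wt f^{\,*}}H^{d|V(\Ga)|}_c(\wt X_\Ga;R)\xra{\cap [\wt X_\Ga]} H^{BM}_0(\wt X_\Ga;R)\xra{\tn{aug}} R,
\]
with the last two arrows being cap product with the Borel--Moore fundamental class followed by augmentation. The $\cG$-action on $\ccM{2}^{E(\Ga)}$ lifts to an orientation-preserving action on $\wt X_\Ga$ (since $\cG\subset\Homeo_+$ and each $\phi(\si)$ is $\cG$-equivariant), so the composition is $\cG$-equivariant. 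Surjectivity follows by evaluating on a compactly supported cochain dual to a point in the interior of $C_\Ga(M,\i)^{@\tn{id}}\subset\wt X_\Ga$.

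The main obstacle is Step 2: one must verify systematically that, at every type 4 binding point, the Case (2) pages indexed by $\si\in\tS_{E(\Ga)}$ really are exhausted by cancelling pairs supplied by the chosen $\Ga$-pairing; this is the point where the graph-homology closedness $[\de_{\bar d}\Ga]_{\sim_{\bar d}}=0$ translates into manifold structure, via the homomorphism $\psi_\Ga:\Au(\Ga)\to\tS_{E(\Ga)}$ organizing which $\si$'s index which pages. A subsidiary technical point is checking that the intrinsic boundary manifold structure of $\phi(\si)(\ccM{\Ga})$ at $p$ used in Definition \ref{binding_dfn} matches the structure of $\rrrsa\subset\ccM{\Ga}$ coherently across all $\si$'s, so the quotient orientation is globally well-defined.
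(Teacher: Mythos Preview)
Your approach is correct in outline but takes a genuinely different route from the paper's sketch. The paper does \emph{not} rebuild the auxiliary manifold $\wt X_\Ga$ in this subsection. Instead it works directly with the decomposition $X_\Ga-T_1=\rxg\sqcup Y$, where $\rxg$ is the open $d|V(\Ga)|$-manifold part and $Y$ (the binding locus) is a closed $(d|V(\Ga)|-1)$-manifold, and uses the long exact sequence
\[
\cdots\to H^{d|V(\Ga)|-1}_c(Y)\xra{\de}H^{d|V(\Ga)|}_c(\rxg)\to H^{d|V(\Ga)|}_c(X_\Ga-T_1)\to 0.
\]
Writing $H^{d|V(\Ga)|}_c(\rxg)\approx R^{\oplus J_2}$ indexed by components, the signed-page-count-zero lemma is exactly what guarantees $\tn{image}(\de)\subset\{(r_i):\sum r_i=0\}$, so the ``sum'' map descends to the desired surjection. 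This is shorter and makes the role of the binding structure transparent.

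Your construction works too---it is essentially transplanting Definition~\ref{Xtl_dfn} and Definition~\ref{rho_dfn} verbatim---but you have misdiagnosed where the difficulty lies. In your Step~2 the manifold structure of $\wt X_\Ga$ does \emph{not} require the signed-page-count-zero property: the gluing $\sim_\Ga$ is pairwise by construction (each boundary piece $(\rrrsa)^{@\si}$ is glued to exactly one partner via $\phi(\si_A)$ or $\phi(\si_{A_1A_2})$), so $\wt X_\Ga$ is a manifold regardless of how many pages meet at a binding point in $X_\Ga$. The signed-page-count-zero property instead governs the \emph{degree} of $\wt f$ over the binding locus, which is irrelevant for defining the map (only for comparing it to the paper's). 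What you do need to check, and did not flag, is that $\phi(\si_A)(\rrrsa)=\rrrsa$ and $\phi(\si_{A_1A_2})(\rrrs{A_1})=\rrrs{A_2}$ (the analogue of the Corollary after Lemma~\ref{cancel_lmm}); this follows by relabelling $\si\mapsto\si\si_A^{-1}$ in Definition~\ref{binding_dfn}.
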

\begin{proof}[Sketch of Proof]
$X_\Ga-T_1$ consists of two parts: $\rxg$ is an open subset of $X_\Ga-T_1$ which is also an $d|V(\Ga)|$-dimensional (oriented) topological manifold, and $Y:=X_\Ga-T_1-\rxg$ is a closed subset of $X_\Ga-T_1$ which is also an $(d|V(\Ga)|-1)$-dimensional topological manifold (this follows from the definition of book binding points). It also follows from the definition of book binding points that $X_\Ga-T_1$ is locally contractible. So we have the long exact sequence of compactly supported cohomology, 
$$\cdots\lra H^{d|V(\Ga)|-1}_c(Y)\xra{\de}H^{d|V(\Ga)|}_c(\rxg)\lra H^{d|V(\Ga)|}_c(X_\Ga-T_1)\lra H^{d|V(\Ga)|}_c(Y)\lra\cdots,$$
where the last term is 0. 
Denote by $J_1,J_2$ the set of connected components of $Y$ and $\rxg$, respectively, then 
$$H^{d|V(\Ga)|-1}_c(Y)\approx R^{\oplus J_1},\qquad H^{d|V(\Ga)|}_c(\rxg)\approx R^{\oplus J_2},$$
and $\de$ is the coboundary map. So, by Lemma 3.27, the image of $\de$ is contained in 
$\{(r_i)_{i\in J_2}\,\big|\,\sum_{i}r_i=0\}.$ Therefore, the map 
$$H^{d|V(\Ga)|}_c(\rxg)\lra R,\qquad (r_i)_{i\in J_2}\lra\sum_{i\in J_2}r_i$$
induces a surjective map from the quotient $H^{d|V(\Ga)|}_c(\rxg)/\tn{image}(\de)\approx H^{d|V(\Ga)|}_c(X_\Ga-T_1)$ to $R$, as desired. 
%
\end{proof}

\begin{lmm}\label{T2small_lmm}
$\dim_t(T_2)\le d|V(\Ga)|-2$.
\end{lmm}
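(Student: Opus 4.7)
The plan is to reduce the bound to the individual pieces $\ov\cS^\Ga_A-\rrrsa$, split each one into the part already handled by Lemma~\ref{small_lmm} and a residual part, and then use the fiber-bundle description of boundary strata to control the residual part.

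First, since each $\phi(\si)$ is a homeomorphism and covering dimension behaves additively on finite unions of closed sets, it will suffice to show $\dim_t(\ov\cS^\Ga_A-\rrrsa)\le d|V(\Ga)|-2$ for every $A\subset\{\i\}\sqcup V(\Ga)$ with $|A|\ge2$. Specializing condition~(2) of Definition~\ref{binding_dfn} to $\si=\id$ forces $A'=A$ (the strata $\cS^\Ga_{A'}$ are pairwise disjoint for distinct $A'$) and recovers exactly the defining condition of $\rsa$, so $\rrrsa\subset\rsa$ and
$$\ov\cS^\Ga_A-\rrrsa \;=\; (\ov\cS^\Ga_A-\rsa)\cup(\rsa-\rrrsa).$$
Lemma~\ref{small_lmm} already bounds the first piece, so only $\dim_t(\rsa-\rrrsa)\le d|V(\Ga)|-2$ remains.

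Next, a point $p\in\rsa-\rrrsa$ is one for which some $\si\in\tS_{E(\Ga)}$ violates both alternatives in Definition~\ref{binding_dfn}: $\phi(\si)(\ccM{\Ga})$ meets every neighborhood of $p$, and either (a) no $A'$ makes $\phi(\si)(\ccM{\Ga})$ a topological manifold with boundary along $\phi(\si)(\cS^\Ga_{A'})$ at $p$, or (b) such an $A'$ exists but $\phi(\si)(\cS^\Ga_{A'})\neq\cS^\Ga_A$ in any neighborhood of $p$. The locus~(a) lies in the finite union $\bigcup_{\si,A'}\phi(\si)(\ov\cS^\Ga_{A'}-\mathring\cS^\Ga_{A'})$, and the Sard-style argument of Lemma~\ref{small_lmm} (via Lemma~\ref{dim_lmm}) bounds each term by $d|V(\Ga)|-2$.

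The hard part will be case~(b): showing that two distinct codimension-one topological strata inside $\ccM{2}^{E(\Ga)}$ meet in codimension two. I would describe both $\ov\cS^\Ga_A$ and $\phi(\si)(\ov\cS^\Ga_{A'})$ as $SL(d)$-bundles over their base configuration spaces via Corollary~\ref{bundleA_crl} and analyze the intersection fiberwise inside $(S^{d-1})^{E(\Ga)}$. Lemma~\ref{cancel_lmm} identifies the only $(\si,A')$-pairs for which the two strata coincide globally, namely $(\si_A,A)$ for type-2 $A$ and $(\si_{A_1A_2},A_2)$ for $\Ga$-paired $(A_1,A_2)$; these are precisely the pairs accommodated by the book-binding structure in $\rrrsa$ and thus excluded from~(b). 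For every other pair, $\phi(\si)(\ov{V}_{\Ga_{A'}})$ does not locally coincide with $\ov{V}_{\Ga_A}$, so a dimension argument analogous to Lemma~\ref{small_lmm} — using Lemma~\ref{regularvalueunique_lmm} to handle non-immersion loci — bounds the fibrewise intersection by $\dim(\ov{V}_{\Ga_A})-1$; adding base and fibre contributions yields total covering dimension at most $d|V(\Ga)|-2$.
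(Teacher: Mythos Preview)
Your reduction to the pieces $\ov\cS^\Ga_A-\rrrsa$ and the split into $(\ov\cS^\Ga_A-\rsa)\cup(\rsa-\rrrsa)$ is fine, and case~(a) is correctly handled. The gap is in case~(b), and it is twofold.

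First, you misread Definition~\ref{binding_dfn}. A point $p\in\rsa$ is a binding point as soon as \emph{every} $\si$ satisfies alternative~(1) or~(2); alternative~(2) only demands that $\phi(\si)(\cS^\Ga_{A'})$ \emph{locally} agree with $\cS^\Ga_A$ near $p$. It does not single out the pairs $(\si_A,A)$ or $(\si_{A_1A_2},A_2)$ from Lemma~\ref{cancel_lmm}; many other $\si$ (e.g.\ those coming from $\psi_\Ga(\Au(\Ga))$) may also satisfy~(2). So your dichotomy ``either $\si$ is one of the cancellation pairs, or the strata do not locally coincide'' is not what the definition says, and Lemma~\ref{cancel_lmm} gives no converse.

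Second, and more seriously, even granting your dichotomy, the implication ``do not locally coincide $\Rightarrow$ intersection has strictly smaller dimension'' fails for arbitrary images of smooth maps: two immersed $(k)$-dimensional sets in $\R^N$ can share an open $k$-dimensional patch without being equal as germs. Lemma~\ref{regularvalueunique_lmm} controls self-intersections of a \emph{single} $f^{\R^d}_{\Ga'}$, not the interaction of $\ov V_{\Ga_A}$ with $\phi(\si)(\ov V_{\Ga_{A'}})$. The paper closes this gap with real algebraic geometry: it shows (via Tarski--Seidenberg) that $V_A$ and $V^\si_{A'}$ are semi-algebraic, takes the Zariski closures $Y_A,Y^\si_{A'}$, and lets $Z(A,A',\si)$ be the union of the irreducible components of $Y_A\cap Y^\si_{A'}$ of Krull dimension $<d|V(\Ga_A)|-d-1$. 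Outside $Z(A,A',\si)$ any point of $V_A\cap V^\si_{A'}$ lies on a common top-dimensional irreducible component, forcing $V_A$ and $V^\si_{A'}$ to agree on a Euclidean neighborhood---so such a point \emph{is} a binding point. The non-binding locus is then contained in the bundle with fiber $Z(A,A',\si)\cap V_A$, whose Krull (hence covering) dimension is at most $d|V(\Ga_A)|-d-2$. This semi-algebraic irreducible-component argument is the essential idea your sketch is missing.
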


Together with Lemma \ref{refinecover_lmm}, Lemma \ref{T2small_lmm} implies that $H^{d|V(\Ga)|}(X_\Ga,S)\approx H^{d|V(\Ga)|}_c(X_\Ga-T_1)$, and we can thus define $\rho: H^{d|V(\Ga)|}(X_\Ga,S)\to R$ using the map in Lemma \ref{goodcohomology_lmm}. The rest of this subsection is devoted to the following
\begin{proof}[Sketch of Proof of Lemma \ref{T2small_lmm}]
Recall $T_2$ consists of points in $X_\Ga-\rxg$ that are not binding points. So, we need to analyze, for $A,A'\in V(\Ga)\sqcup\{\i\}$ and $\si\in\wt{S}_{E(\Ga)}$, how ${\cS}^\Ga_A$ and $\phi(\si)({\cS}^\Ga_{A'})$ intersect. Suppose $A,A',\si$ are such that they do intersect. Then, by the same reasoning as in Lemma \ref{chambers_lmm}, there exists an unordered, unoriented graph isomorphism $\Ga/\Ga_{A'}\to\Ga/\Ga_A$ whose edge permutation is given by the restriction of $\si$ to $E(\Ga/\Ga_{A'})$. So $\si$ also restricts to a bijection $E(\Ga_{A'})\to E(\Ga_A)$. Abusing notation we still denote by $\phi(\si)$ the map $(S^{d-1})^{E(\Ga_{A'})}\to(S^{d-1})^{E(\Ga_{A})}$, permuting factors according to $\si$ and composing with the antipodal map when there is a negative sign.
Recall the notation ``$f^{\R^d}_{\Ga'}$'' in Definition \ref{Vga_dfn}.
Denote $${V}^\si_{A'}=\phi(\si)\big(f^{\R^d}_{\Ga_{A'}}(C^\quo_{V(\Ga_{A'})}(\R^d))\big)\subset(S^{d-1})^{E(\Ga_A)},\qquad V_A=f^{\R^d}_{\Ga_{A}}(C^\quo_{V(\Ga_{A})}(\R^d))\subset(S^{d-1})^{E(\Ga_A)}.$$
Then, ${\cS}^\Ga_A\cap\phi(\si)({\cS}^\Ga_{A'})$ is a fiber bundle over ${C}_{\Ga/A}(M,\i)$ with fiber ${V}_A\cap{V}^\si_{A'}\subset (S^{d-1})^{E(\Ga_A)}$. 

\begin{lmm}
Let $\Ga'$ be a graph. Then $f^{\R^d}_{\Ga'}(C^\quo_{V(\Ga')}(\R^d))\subset(S^{d-1})^{E(\Ga')}\subset (\R^d)^{E(\Ga')}$, where $S^{d-1}$ is viewed as the unit sphere in $\R^d$, is semi-algebraic. 
\end{lmm}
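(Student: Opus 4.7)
The plan is to apply the Tarski--Seidenberg theorem. First, I would lift $f^{\R^d}_{\Ga'}$ to the unquotiented configuration space: since the direction vector $(x_{v_+(e)}-x_{v_-(e)})/|x_{v_+(e)}-x_{v_-(e)}|$ is invariant under overall translation and positive scaling, the map
$$\tl{f}: (\R^d)^{V(\Ga')} - \De_{\tn{big}} \lra (S^{d-1})^{E(\Ga')}, \qquad \tl{f}(x)_e = \frac{x_{v_+(e)} - x_{v_-(e)}}{|x_{v_+(e)} - x_{v_-(e)}|}$$
has the same image as $f^{\R^d}_{\Ga'}|_{C_{V(\Ga')}(\R^d)}$. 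Its domain is semi-algebraic, being the complement in $\R^{d|V(\Ga')|}$ of a finite union of linear subspaces.

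Next, I would write down the graph of $\tl{f}$ as a subset of $(\R^d)^{V(\Ga')} \times (\R^d)^{E(\Ga')}$ via first-order conditions over the reals, avoiding square roots by introducing an existentially quantified positive scalar for each edge:
\begin{align*}
\tn{Graph}(\tl f) = \Big\{(x,u) :{}& x \in (\R^d)^{V(\Ga')} - \De_{\tn{big}}, \\
& \forall\, e \in E(\Ga'),\ \textstyle\sum_{i} u_{e,i}^2 = 1, \\
& \forall\, e \in E(\Ga'),\ \exists\, \la_e > 0 \tn{ with } \la_e\, u_e = x_{v_+(e)} - x_{v_-(e)} \Big\}.
\end{align*}
All conditions are polynomial equations/inequalities with a single layer of existential quantification over the reals, so Tarski--Seidenberg implies $\tn{Graph}(\tl f)$ is semi-algebraic.

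Finally, the image $\tl{f}((\R^d)^{V(\Ga')} - \De_{\tn{big}})$ is the projection of $\tn{Graph}(\tl f)$ onto the $u$-factor, and hence semi-algebraic by another application of Tarski--Seidenberg. Since this equals $f^{\R^d}_{\Ga'}(C_{V(\Ga')}(\R^d))$, the lemma follows.

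There is no real obstacle here; the only mildly delicate point is that the normalization $y \mapsto y/|y|$ is not a polynomial, which is why I replace it with the existential formulation ``$\exists\, \la > 0$ with $\la u_e = x_{v_+(e)} - x_{v_-(e)}$''. Once this is done, the argument is a routine use of the elimination theorem.
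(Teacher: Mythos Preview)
Your proof is correct and follows essentially the same approach as the paper: the paper factors $\tl f$ as the linear map $(x_v)_v\mapsto (x_{v_+(e)}-x_{v_-(e)})_e$ followed by the radial projection $(\R^d-0)^{E(\Ga')}\to(S^{d-1})^{E(\Ga')}$, observes that both have semi-algebraic graphs, and applies Tarski--Seidenberg. Your existential-$\la_e$ formulation is exactly what makes the graph of the radial projection semi-algebraic, so the two arguments are the same in substance.
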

\begin{proof}
It is the image of the composition of a linear map  $(\R^d)^{V(\Ga)}\!-\De_{\tn{big}}\to(\R^d-\{0\})^{E(\Ga')}$ with the projection map $(\R^d-\{0\})^{E(\Ga')}\to (S^{d-1})^{E(\Ga')}$. Both of these maps' graphs are semi-algebraic. So the image is also semi-algebraic by Tarski–Seidenberg Theorem. 
\end{proof}
By the above lemma, $V^\si_{A'}$ and $V_A$ are semi-algebraic. They are also open subsets of $(S^{d-1})^{E(\Ga_A)}$. Let $Y^\si_{A'},Y_A\subset (S^{d-1})^{E(\Ga_A)}$ be minimal algebraic sets containing $V^\si_{A'},V_A$, respectively. Then $V^\si_{A'}\subset Y^\si_{A'},V_A\subset Y_A$ are open (in Euclidean topology), and the Krull dimensions of both $Y_A$ and $Y^\si_{A'}$ are $d|V(\Ga_A)|-d-1$. Denote by $Z(A,A',\si)$ the union of irreducible components of $Y^\si_{A'}\cap Y_A$ whose Krull dimension is less than $d|V(\Ga_A)|-d-1$. Suppose $p\in V_A\cap V^\si_{A'} - Z(A,A',\si)$, then $p$ is in some irreducible component $Y$ of $Y_A\cap Y^\si_{A'}$ whose Krull dimension is $d|V(\Ga_A)|-d-1$, so $Y$ must also be an irreducible component of both $Y_A$ and $Y^\si_{A'}$. Since $p\notin Z(A,A',\si)$, there exists a neighborhood $U_p\subset(S^{d-1})^{E(\Ga_A)}$ of $p$ such that $V_A\cap U_p=Y\cap U_p=V^\si_{A'}\cap U_p$. 

Now, define $\wt{Z}(A,A',\si)$ to be the sub-fiber bundle of $\cS^\Ga_A$ over $C_{\Ga/A}(M,\i)$ whose fibers are $Z(A,A',\si)\cap V_A$. 
For a given $A\subset V(\Ga)\sqcup\{\i\}$, define 
$\cS_A^{\Ga,\tn{rmv}}=\bigcup_{\si,A'}\wt{Z}(A,A',\si)$, where ``rmv'' stands for ``remove''. It can be shown that the Krull dimension of an algebraic subset of $\R^n$ equals to its covering dimension in Euclidean topology, so $\dim_t(\cS^{\Ga,\tn{rmv}}_A)\le d|V(\Ga)|-2$. By the conclusion of the previous paragraph, every point in $\ov\cS^\Ga_A$ which is not in (1) $\cS_A^{\Ga,\tn{rmv}}$ or (2) $\cS^\Ga_A-\rrsa$ (as in Definition) or (3) the image of some codimsion at least 2 stratum of $\ov C_{V(\Ga)}(M,\i)\big)$ under $f_\Ga$, 
is a binding point, by the definition of binding points. By Lemma \ref{small_lmm}, $\dim_t(\cS^\Ga_A-\rrsa)\le d|V(\Ga)|-2$. So, the union of the above three sets has covering dimension at most $d|V(\Ga)|-2$. This completes the proof of the lemma.

\end{proof}

\section{Re-constructing Kontsevich's characteristic classes}\label{propagator_sec} 
Recall $R=\Z$ or $\R$ and all cohomology in this section are with $R$-coefficients, which we omit. 
Let $\Ga$ be as in Section \ref{fixagraph_sec}; assume also that $\Ga$ is trivalent. Let $M$ be as in Section \ref{thm_sec}. Assume $d\ge3$. 
Let $\pi\!:E\to B$ be a framed smooth $(M,\i)$-fiber bundle as in Section \ref{thm_sec}. 
We assume $B$ can be given a CW-structure. Using CW-approximation, Definition \ref{K_dfn} and thus Corollary \ref{K_crl} generalize to cases where $B$ is just a paracompact Hausdorff space.

Let 
\[
\begin{tikzcd}[column sep=-.1cm]
C_2(\pi)\dar&\ov{C}_2(\pi)\dar&\ov{C}^{E(\Ga)}_2(\pi)\dar&\ov{C}_\Ga(\pi)\dar& \cS_A(\pi)\dar & \cS^\Ga_A(\pi)\dar & \ov\cS^\Ga_A(\pi)\dar & X_\Ga(\pi)\dar["\pi_X"] &T_1(\pi)\dar&T_2(\pi)\dar&S(\pi)\dar&\wt{X}_\Ga(\pi)\dar["\pi_{\wt X}"]\\
B&B&B&B&B&B&B&B&B&B&B&B
\end{tikzcd}
\]
be the associated bundles of $\pi$ with fibers $C_2(M,\i)$, $\ccM{2}$, $\ccM{2}^{E(\Ga)}$, $\ccM{\Ga}$, $\cS_A$, $\cS^\Ga_A$, $\ov\cS^\Ga_A$, $\tcM$, $T_1$, $T_2$, $S$, $\wt{X}_\Ga$, respectively. (All of these spaces, except for $\wt{X}_\Ga$, are defined in Section \ref{defspace_sec}; $\wt{X}_\Ga$ is defined in Definition \ref{Xtl_dfn}.) Correspondingly, the maps $f_\pm,f_\Ga,\wt{f}$ in Section~\ref{defspace_sec} induce bundle maps. Abusing notation, we still denote them by $f_\pm,f_\Ga,\wt{f}$. Notice that $\ccEft$ denotes the fiber product while $\ccEt^{E(\Ga)}$ denotes the direct product of the total space, ignoring the fiber bundle structure. 

\begin{lmm}\label{Gbundlemaps_lmm}
Under the condition of Theorem \ref{main_thm}, $(\tl h,h)$ induces $\cG$-bundle maps between the $\pi'$ and $\pi''$ version of all the bundles above. 
\end{lmm}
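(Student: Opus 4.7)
The plan is to propagate the hypothesized $\cG$-bundle structure on $\ccEp 2$ through each construction listed in the diagram, using the fact that every space there is built from $\ccE 2$ using only $\cG$-invariant data and $\cG$-equivariant operations.

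First, Remark \ref{Gfr_rmk} says that the two commutative diagrams in Theorem \ref{main_thm} together with the fiberwise orientation-preserving condition are exactly the statement that $(\tl h,h)$ is a $\cG$-bundle map $\ccEp 2\to\ccEpp 2$ covering $h_B$. Taking the $|E(\Ga)|$-fold direct product (resp.\ fiber product over $B$) of $\tl h$ with itself will then produce $\cG$-bundle maps on $\ccEt^{E(\Ga)}$ and on $\ccEft$, equipped with the diagonal $\cG$-action. The restriction to $C_2(\pi)$ is immediate, since the removed locus $f_\pm^{-1}(\De\cup s_\i\times_B\! E\cup E\times_B\! s_\i)$ is $\cG$-invariant and is preserved by $\tl h$.

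Next I would handle $\ccE{\Ga}(\pi)$, $\cS_A(\pi)$, $\cS^\Ga_A(\pi)$, $\ov\cS^\Ga_A(\pi)$, and $X_\Ga(\pi)$ in one sweep. Each is defined fiberwise purely in terms of (i) the forgetful maps $f_\pm$, (ii) the section $s_\i$, and (iii) combinatorial data of $\Ga$ or of $A\subset\{\i\}\sqcup V(\Ga)$ (cf.\ Definition \ref{cluster_dfn}); all of this is preserved by $\cG$, so these subbundles of $\ccEt^{E(\Ga)}$ are $\cG$-invariant, and $\tl h^{E(\Ga)}$ restricts to fiberwise homeomorphisms between the $\pi'$ and $\pi''$ versions. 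For $X_\Ga(\pi)=\bigcup_{\si\in\tS_{E(\Ga)}}\phi(\si)\big(\ccE{\Ga}(\pi)\big)$ one uses additionally that each $\phi(\si)$ is $\cG$-equivariant (the observation after Definition \ref{Saction_dfn}), hence commutes with $\tl h^{E(\Ga)}$, so the union is preserved. The subbundles $T_1(\pi),T_2(\pi),S(\pi)$ of Definition \ref{strata_dfn} are then dealt with by noting that $\rsa$ is by Definition \ref{ringGa_dfn} the locus at which $\ccE{\Ga}(\pi)$ is a topological manifold with boundary---an intrinsic topological condition preserved by any fiberwise homeomorphism---and $\rrsa$ is obtained by removing further topologically-characterized strata. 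The partition of subsets $A$ into types 1--4 depends only on $\Ga$, so the indexing matches on the two sides.

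Finally, $\wt X_\Ga(\pi)$ is the quotient of $\bigsqcup_\si C'_\Ga(M,\i)^{@\si}$ by the relation $\sim_\Ga$ of Definition \ref{Xtl_dfn}, whose gluing maps are $\phi(\si_A)$ and $\phi(\si_{A_1A_2})$, both $\cG$-equivariant. The disjoint-union map induced by $\tl h^{E(\Ga)}$ on the labeled copies therefore respects $\sim_\Ga$ and descends to a $\cG$-equivariant continuous bundle map between the $\pi'$ and $\pi''$ versions; the fiberwise orientation-preserving hypothesis ensures compatibility with the sign-twisted orientations on each labeled copy, so the descent is orientation-preserving on fibers as well. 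The one non-formal ingredient in the whole argument is the intrinsic topological characterization of $\rsa$ used above; once that is in hand, every other claim is a direct matter of tracking definitions.
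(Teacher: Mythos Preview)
Your proposal is correct and takes essentially the same approach as the paper, just spelled out in more detail: the paper compresses the entire argument to the single observation that every space in Section~\ref{defspace_sec} carries a $\cG$-action and all maps defined there are $\cG$-equivariant, so $\tl h^{E(\Ga)}$ automatically restricts to $\cG$-bundle maps on each associated subspace. (One minor imprecision: Remark~\ref{Gfr_rmk} says the $\cG$-bundle-map condition on $(\tl h,h)$ is equivalent to the hypotheses \emph{without} the second (framing) diagram, not with both---but this does not affect your argument.)
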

\begin{proof}
By the existence of $\wt{h}$, $h$ is a $\cG$-bundle map. Since every space defined in Section \ref{defspace_sec} has induced $\cG$-action and the maps between them defined in Section \ref{defspace_sec} are all $\cG$-equivariant, $$\tl{h}^{E(\Ga)}:\ov{C}_2(\pi')^{E(\Ga)}\lra\ov{C}_2(\pi'')^{E(\Ga)}$$
restricts to $\cG$-bundle maps
\begin{gather*}\tl{h}^{E(\Ga)}\!\!:\!\ov{C}^{E(\Ga)}_2\!(\pi')\to\ov{C}^{E(\Ga)}_2\!(\pi''),\ \tl{h}_\Ga\!:\!\ccEp{\Ga}\to\ccEpp{\Ga},\\
\tl{h}_X\!:\!X_\Ga(\pi')\to X_\Ga(\pi''),\ \tl{h}_{S}\!:\!S(\pi')\to S(\pi''),\quad\tn{etc..}
\end{gather*}
\end{proof}

The framing on $\pi$ induces a map $F:\prt^v\ov{C}_2(\pi)\to S^{d-1}$, as in \cite[\S2.4.3]{WatanabeAdd} (it is called $p(\tau_E)$ there). 
Define $\sim_F$ to be the following equivalence relation on $\ov{C}_2(\pi)$: 
$$x\sim_F y\ \tn{ if }\ x=y\in C_2(\pi)=\ov{C}_2(\pi)\!-\!\prt^v\ov{C}_2(\pi)\ \tn{ or }\ F(x)=F(y),\ x,y\in\prt^v\ov{C}_2(\pi). $$
Denote by $q\!:\!\ov{C}_2(\pi)\to\ov{C}_2(\pi)/\!\sim_F$ the quotient map by $\sim_F$, where the target is equipped with the quotient topology. Let us denote $S^{d-1}_\pi=q(\prt^v\ov{C}_2(\pi))$ and denote $(\cq)-S^{d-1}_\pi$ still by $C_2(\pi)$, then $\cq=C_2(\pi)\sqcup S^{d-1}_\pi$ as a set. 
It is not hard to see that $\cq$ is Hausdorff and $S^{d-1}_\pi$ is a deformation retract of some neighborhood of it (hint: since $\ov{C}_2(\pi)$ restricted to each cell of $B$ is a manifold with compact boundary, using a cell-by-cell construction we can find a collar neighborhood of $\prt^v\ov{C}_2(\pi)$ in $\ov{C}_2(\pi)$). Notice that the orientation on $M$ specifies an orientation on $S^{d-1}_\pi$. 

\begin{dfn}\label{propagator_dfn}
The {\it propagator class} $\Om(\pi)\in H^{d-1}(\cq;R)$ is the unique class satisfying that $\Om(\pi)|_{S^{d-1}_\pi}\in H^{d-1}(S^{d-1}_\pi;R)$ is the Poincar\'e dual of the point class. 
\end{dfn}

The existence and uniqueness of such a class follows from the exact sequence
$$H^{d-1}(\cq,\Sp)\to H^{d-1}(\cq)\to H^{d-1}(\Sp)\to H^d(\cq,\Sp)$$
and the vanishing of its first and last terms: for $n=d,d-1$, $$H^n(\cq,\Sp)\!\approx\!H^n\big((\cq)/\Sp\big)\!=\!H^n(\ov{C}_2(\pi)/\prt\ov{C}_2(\pi))\!\approx\!H^n(\ov{C}_2(\pi),\prt\ov{C}_2(\pi))$$
and the vanishing of the last term follows from the proof of \cite[Lemma 2.10]{WatanabeAdd}. 
The definition of propagator class here is completely analogous to that in \cite{KT}. 

We next show that $\Om(\pi)$ gives us a class $\Om_\Ga(\pi)\in H^{|E(\Ga)|(d-1)}\big(X_\Ga(\pi),S(\pi)\big)$. 

\subsection{Defining \texorpdfstring{$\Om_\Ga$}{Om}}

\label{landinginrelative_subsec}

Define
$$\io: X_\Ga(\pi)\subset\ov{C}^{E(\Ga)}_{2}(\pi)\subset\ov{C}_2(\pi)^{E(\Ga)}\xra{q,\ldots,q}(\cq)^{E(\Ga)}.$$
We remark that $\ccE{2}^{E(\Ga)}$ is the direct product and $\ov{C}^{E(\Ga)}_{2}(\pi)$ is the fiber product. 
For $I\subset{E(\Ga)}$, denote by $p_I\!:\!(\cq)^{E(\Ga)}\to(\cq)^{I}$ the projection to the $I$-factors. If $I=\{e\}$ we also denote $p_e=p_I$. 
For $\si\in\tS_{E(\Ga)}$ and $A\subset\{\i\}\sqcup V(\Ga)$, define $I_{\si,A}=\si(E(\Ga_A))\subset E(\Ga)$, where we implicitly identify edges of $\Ga_A$ also as edges of $\Ga$ and abuse notation to still write $\si$ for its image in $S_{E(\Ga)}$.
\begin{lmm}\label{:(_lmm}
Suppose $A\subset\{\i\}\sqcup V(\Ga)$ and $\si\in\wt{S}_{E(\Ga)}$. Denote $$\ov{V}_A^\si=p_{I_{\si,A}}\big(\iota(\phi(\si)(\ov{\cS}^\Ga_A(\pi)))\big)\subset(\cq)^{I_{\si,A}},$$ 
then $\ov{V}^\si_A\subset (S^{d-1}_\pi)^{I_{\si,A}}$ and it is the image of $\ov{C}^\quo_{V(\Ga_A)}(\R^d)$ under a smooth map. 
\end{lmm}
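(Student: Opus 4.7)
The plan is to trace $\ov{V}^\si_A$ through the bundle description in Corollary \ref{bundleA_crl}, the formula for $\phi(\si)$ in Definition \ref{Saction_dfn}, and the quotient $q$, and then identify the result explicitly as a smooth image of $\ov{C}_{V(\Ga_A)}(\R^d)$.

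First I would apply Corollary \ref{bundleA_crl} bundlewise over $B$: $\ov{\cS}^\Ga_A(\pi)$ is a fiber bundle with structure group $SL(d)$ and typical fiber $\vga\subset (S^{d-1})^{E(\Ga_A)}$; inside the ambient $\ccE{2}^{E(\Ga)}$, each fiber lies in $\prt^v\ccE{2}^{E(\Ga_A)}$ on the $E(\Ga_A)$-factors as $\vga$ via $f^{\R^d}_{\Ga_A}$ composed with the local trivialization, while the remaining $E(\Ga)\!-\!E(\Ga_A)$ factors are fixed by the base point. Because the framing $F:\prt^v\ccE{2}\to S^{d-1}$ is globally defined (and is exactly what is used to form $\cq$) and $\vga$ is $SL(d)$-invariant by Definition \ref{Vga_dfn}, the $SL(d)$-ambiguity between local trivializations is absorbed: the image of $\ov{\cS}^\Ga_A(\pi)$ under $q^{E(\Ga_A)}\circ p_{E(\Ga_A)}$ is the single fixed subset $\vga\subset (\Sp)^{E(\Ga_A)}$, independent of base point or trivialization.

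Next I would unpack $\phi(\si)$ and $p_{I_{\si,A}}$. By Definition \ref{Saction_dfn}, $\phi(\si)$ permutes the factors of $\ccE{2}^{E(\Ga)}$ via the image of $\si$ in $S_{E(\Ga)}$ and applies $\tau$ to each factor whose endpoints are exchanged by $\si$. On $\prt^v\ccE{2}$, the framing satisfies $F\circ\tau=-F$ (the antipode on $S^{d-1}$) by the usual orientation convention, hence $q\circ\tau$ descends to the antipodal map on $\Sp$. Combining,
\[
p_{I_{\si,A}}\circ\iota\circ\phi(\si)\big|_{\ov{\cS}^\Ga_A(\pi)}
\]
lands in $(\Sp)^{I_{\si,A}}$ and factors as $q^{E(\Ga_A)}\circ p_{E(\Ga_A)}|_{\ov{\cS}^\Ga_A(\pi)}$ (with image $\vga$, by the previous paragraph) followed by the smooth diffeomorphism $(\Sp)^{E(\Ga_A)}\to(\Sp)^{I_{\si,A}}$ given by permutation by $\si$ together with coordinatewise antipodes on the direction-reversing factors. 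Since $\vga$ is by Definition \ref{Vga_dfn} the image of $\ov{C}_{V(\Ga_A)}(\R^d)$ under the smooth map $f^{\R^d}_{\Ga_A}$, the composition exhibits $\ov{V}^\si_A$ as a smooth image of $\ov{C}_{V(\Ga_A)}(\R^d)$ and simultaneously establishes the inclusion $\ov{V}^\si_A\subset(\Sp)^{I_{\si,A}}$.

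The main obstacle I anticipate is making the globalization in the first step precise. The bundle $\ov{\cS}^\Ga_A(\pi)$ can be topologically non-trivial, so one must combine the $SL(d)$-invariance of $\vga$ with the global nature of $F$ (defined consistently across both the diagonal part and the infinity part of $\prt^v\ccE{2}$ using the framing on $E\!-\!s_\i$ together with the trivialization $t$) in order to conclude that the image after $q$ is literally the fixed set $\vga$ rather than a trivialization-dependent $SL(d)$-translate. Once this is verified, the remaining permutation-and-antipode bookkeeping is routine.
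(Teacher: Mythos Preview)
Your approach is essentially the paper's. The paper reduces to $\si=\id$ (``other cases follow easily'') and then splits into two cases: for $\i\notin A$ it uses the framing $F$ to trivialize the vertical tangent bundle and obtain $\cS^\Ga_A(\pi)\approx C_{\Ga/A}(\pi)\times\vga$, so the projection is literally $\vga$; for $\i\in A$ it uses the trivialization $t$ near $s_\i$ to identify all vertical tangent spaces along $s_\i(B)$ with $T_\i M$, yielding $\ov\cS_A(\pi)\approx\ov C_{V(\Ga/A)}(\pi)\times\ov C_{V(\Ga_A)}(T_\i M)$, and observes that $p_{E(\Ga_A)}\circ\io\circ f_\Ga$ factors through the second factor. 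You instead package both cases into the single global map $F:\prt^v\ccE2\to S^{d-1}$ (which, as you note in your last paragraph, is built from exactly these two ingredients), and in exchange you spell out the $\phi(\si)$ step (permutation plus coordinatewise antipodes via $F\circ\tau=-F$) that the paper leaves implicit. One caution about your uniform treatment: your sentence ``each fiber lies in $\prt^v\ccE2^{E(\Ga_A)}$ on the $E(\Ga_A)$-factors'' is not literally correct on the open stratum when $\i\in A$ (an edge of $\Ga_A$ joining two vertices in $V(\Ga)\setminus(A-\i)$ has $z_e\in C_2(\pi)$ there); the paper sidesteps this by working with the product description of the closed stratum and the factorization through $\ov C_{V(\Ga_A)}(T_\i M)$, which is exactly the fix your last paragraph gestures at.
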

\begin{proof}
This is a consequence of the framing $F$ on $\pi$ (in the case $\i\notin A$) and the trivialization of $\pi$ near $s_\i(B)$ (in the case $\i\in A$). For simplicity we only consider the case $\si=\id$; the other cases follow easily. 
First assume $\i\notin A$. Recall $\ov{V}_{\Ga_A}\!\subset\!(S^{d-1})^{E(\Ga_A)}$ as in Definition \ref{Vga_dfn}. Since the framing $F$ identifies the vertical tangent space of $E$ at each point not in $s_\i(B)$ with $\R^d$, 
$$\cS^\Ga_A(\pi)\approx C_{\Ga/A}(\pi)\sx\ov{V}_{\Ga_A}\subset C_2(\pi)^{E(\Ga/A)}\sx(S^{d-1}_\pi)^{E(\Ga_A)}\subset (\cq)^{E(\Ga/A)}\sx(\cq)^{E(\Ga_A)},$$
and $(p_{E(\Ga_A)}\circ\io)(\cS^\Ga_A(\pi))=\ov{V}_{\Ga_A}$. Since $\ov{V}_{\Ga_A}$ is closed,  $(p_{E(\Ga_A)}\circ\io)(\ov{\cS}^\Ga_A(\pi))=\ov{V}_{\Ga_A}$ as well.

Now assume $\i\in A$. Since $\pi$ is trivialized near $s_\i(B)$, the vertical tangent spaces of $E$ at points in $s_\i(B)$ are all identified with $T_\i M$. So
$$\ov\cS_A(\pi)\approx \ov{C}_{V(\Ga/A)}(\pi)\sx \ov{C}^\quo_{V(\Ga_A)}(T_\i M).$$
And $(p_{E(\Ga_A)}\circ\io)(\ov\cS^\Ga_A(\pi))=(p_{E(\Ga_A)}\circ\io\circ f_\Ga)(\ov\cS_A(\pi))$; the map $p_{E(\Ga_A)}\circ\io\circ f_\Ga$ factors through the projection to the second factor above. 
\end{proof}


By Lemma \ref{Y_lmm} and Lemma \ref{X_lmm} below (these are two somewhat technical lemmas whose statements and proofs are postponed to below Proposition \ref{naturalclass_prp}, due to their lengths), the cup product 
\begin{gather*}
\cup\!:H^{d-1}(\cq)^{\otimes E(\Ga)}\lra H^{|E(\Ga)|(d-1)}\big((\cq)^{E(\Ga)}\big),\\ \otimes_{i=1}^{|E(\Ga)|}\si_{e^\Ga_i}\lra p_{e^\Ga_1}^*\si_{e^\Ga_1}\cup\ldots\cup p_{e^\Ga_{|E(\Ga)|}}^*\si_{e^\Ga_{|E(\Ga)|}}
\end{gather*}
factors through $H^{|E(\Ga)|(d-1)}\big((\cq)^{E(\Ga)},\bigcup_{\si,A}p^{-1}_{I_{\si,A}}(\vgsa{\si})\big)$, where the union ranges through all $\si\in\wt{S}_{E(\Ga)}$ and $A\subset\{\i\}\sqcup V(\Ga)$ of type 3 (same below). 
\begin{dfn}
\begin{align*}\Om'_\Ga(\pi):=&p_{e^\Ga_1}^*\Om(\pi)\cup\ldots\cup p_{e^\Ga_{|E(\Ga)|}}^*\Om(\pi)\in H^{|E(\Ga)|(d-1)}\big((\cq)^{E(\Ga)},\bigcup_{\si,A}p^{-1}_{I_{\si,A}}(\vgsa{\si})\big)\\
\Om_\Ga(\pi):=&\iota^*\Om'_\Ga(\pi)\in H^{|E(\Ga)|(d-1)}\big(X_\Ga(\pi),S(\pi)\big)
\end{align*}
\end{dfn}

\begin{prp}\label{naturalclass_prp}
Under the assumptions of Theorem \ref{main_thm}, let $\tl{h}_X:X_\Ga(\pi')\to X_\Ga(\pi'')$ be the $\cG$-bundle map as in Lemma \ref{Gbundlemaps_lmm}, then $\tl{h}_X^*(\Om_\Ga(\pi''))=\Om_\Ga(\pi')$. 
\end{prp}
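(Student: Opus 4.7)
The plan is to show that every ingredient in the definition of $\Om_\Ga(\pi)$ is natural with respect to $(\tl h, h, h_B, h_S)$, and then assemble via functoriality of pullback and cup product. First I would show that $\tl h\!:\!\ccEp{2}\!\to\!\ccEpp{2}$ descends through the quotients to a continuous map $\bar h\!:\!\cqp{'}\!\to\!\cqp{''}$ with $\bar h\circ q' = q''\circ\tl h$. If $x\sim_{F'} y$ in $\prt^v\ccEp{2}$, the right-hand commutative diagram in Theorem~\ref{main_thm} gives $F''(\tl h(x))=h_S(F'(x))=h_S(F'(y))=F''(\tl h(y))$, so $\tl h(x)\sim_{F''}\tl h(y)$; the universal property of the quotient topology produces $\bar h$ and makes it continuous. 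By construction $\bar h$ restricts to $S^{d-1}_{\pi'}\!\to\!S^{d-1}_{\pi''}$, and under the framing identifications this restriction is precisely $h_S$.

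The key step is to verify $\bar h^*\Om(\pi'')=\Om(\pi')$. Since $\Om(\pi)$ is uniquely characterized by its restriction to $S^{d-1}_\pi$ being Poincar\'e dual to the point class, it suffices to check that $h_S$ is an orientation-preserving self-homeomorphism of $S^{d-1}$. This follows from the fact that the framings $F',F''$ identify fibers of $S^{d-1}_{\pi'},S^{d-1}_{\pi''}$ with the oriented $S^{d-1}$ via positively oriented frames of vertical tangent spaces, combined with the fiberwise orientation-preserving hypothesis on $\tl h$ restricted to the diagonal boundary stratum (using the factorization of sphere-bundle orientation as base times fiber).

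Next I would check commutativity of
\[
\begin{tikzcd}[column sep=large]
X_\Ga(\pi')\rar["\tl h_X"]\dar["\io'"'] & X_\Ga(\pi'')\dar["\io''"] \\
(\cqp{'})^{E(\Ga)}\rar["\bar h^{E(\Ga)}"] & (\cqp{''})^{E(\Ga)}
\end{tikzcd}
\]
and that $\bar h^{E(\Ga)}$ sends the $\pi'$-relative subspace $\bigcup_{\si,A}p^{-1}_{I_{\si,A}}(\vgsa{\si}(\pi'))$ into its $\pi''$-analogue. Commutativity is immediate since $\io$ is built from the $\cG$-equivariant forgetful maps to the $\ccE{2}$-factors followed by $q$, and $\bar h\circ q'=q''\circ\tl h$ from Step~1. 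For the subspace condition, Lemma~\ref{Gbundlemaps_lmm} gives $\tl h_X(\phi(\si)(\ov\cS^\Ga_A(\pi')))\subset\phi(\si)(\ov\cS^\Ga_A(\pi''))$, and applying $p_{I_{\si,A}}\circ\io$ together with Lemma~\ref{:(_lmm} yields $\bar h^{I_{\si,A}}(\vgsa{\si}(\pi'))\subset\vgsa{\si}(\pi'')$. By naturality of cup product and relative pullback together with Step~2, one then obtains $(\bar h^{E(\Ga)})^*\Om'_\Ga(\pi'')=\Om'_\Ga(\pi')$, and hence
\[
\tl h_X^*\,\Om_\Ga(\pi'') = \tl h_X^*(\io'')^*\Om'_\Ga(\pi'') = (\io')^*(\bar h^{E(\Ga)})^*\Om'_\Ga(\pi'') = (\io')^*\Om'_\Ga(\pi') = \Om_\Ga(\pi').
\]

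The main obstacle I anticipate is the orientation verification in the second step: the theorem assumes only that $h_S$ is a homeomorphism, so establishing orientation-preservation requires carefully tracing how the framings interact with the vertical fiber orientations and with $\tl h$ on the diagonal sphere bundle. The remainder is essentially formal bookkeeping made possible by the $\cG$-equivariance built into Section~\ref{defspace_sec} and the naturality of \v{C}ech cohomology operations.
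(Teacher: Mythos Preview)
Your proposal is correct and takes essentially the same route as the paper's proof. One refinement: where you invoke ``naturality of cup product and relative pullback,'' the paper cites the naturality clause of Lemma~\ref{Y_lmm} specifically---$\Om'_\Ga$ is defined via the map $\Xi$ constructed there (which lifts the cup product into the relative group), and you have already verified its hypothesis $(\bar h^{I_{\si,A}})(\vgsa{\si}(\pi'))\subset\vgsa{\si}(\pi'')$; the paper is also terser than you on the orientation of $h_S$, so your extra care there is not misplaced.
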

\begin{proof}
By the second commutative diagram in Theorem \ref{main_thm}, $\tl{h}$ induces $$\tl{h}_q:\cqp{'}\lra\cqp{''}$$
which commutes with $q$ and restricts to the homeomorphism 
$h_S:S^{d-1}_{\pi'}\to S^{d-1}_{\pi''}$. 
It follows that $\tl{h}_q^*\Om(\pi'')=\Om(\pi')$. Since $\io$ commutes with $\tl{h}_X$ and $(\tl{h}_q)^{E(\Ga)}$, by the naturality statement in Lemma \ref{Y_lmm} below, $\tl{h}^*_X(\Om_\Ga(\pi''))=\Om_\Ga(\pi')$. 
\end{proof}

\begin{lmm}\label{Y_lmm}
Let $Y=Y_1\sx\ldots\sx Y_n$ be a product of paracompact Hausdorff spaces. Let $s,r\in\Z^{>0}$.  Suppose for all $i=1,\ldots,r$, we have $I_i=\{a^i_1,\ldots,a^i_{m_i}\}\subset\{1,\ldots,n\}$ and closed subset $V_{i}\subset Y_{a^i_1}\sx\ldots\sx Y_{a^i_{m_i}}$ satisfying the following condition: every open cover of $Y$ has a refinement of the form 
$$\cU_1\sx\ldots\sx\cU_n:=\{U_1\sx\ldots\sx U_n|U_j\in\cU_j\}, \tn{ where }\cU_j\tn{ is an open cover of }Y_j$$
such that 
\begin{itemize}
\vspace{-.5cm}
\item[$(\dagger)$]\label{st_condition} for all $i=1,\ldots,r$, $s'_1,\ldots s'_{m_i}\!=s\tn{ or }s\!-\!1$, with at most one of them being $s\!-\!1$,
$$\Big((U^0_{a^i_1}\cap\ldots\cap U^{s'_1}_{a^i_1})\sx\ldots\sx(U^0_{a^i_{m_i}}\cap\ldots\cap U^{s'_{m_i}}_{a^i_{m_i}})\Big)\cap V_{i}=\eset$$
where $U^0_{a^i_j},\ldots,U^{s'_j}_{a^i_j}$ are any pairwise distinct elements of $\cU_{a^i_j}$, for every $j=1,\ldots,m_i$. 
\vspace{-.5cm}
\end{itemize}
Denote $p_{I_i}\!:\!Y\to Y_{a^i_1}\sx\ldots\sx Y_{a^i_{m_i}}$ the projection. Then there is a map 
$$\Xi:H^s(Y_1)\otimes\ldots\otimes H^s(Y_n)\lra H^{sn}\big(Y,\bigcup_{i=1}^rp^{-1}_{I_i}(V_i)\big)$$ such that $\Xi$ composed with the restriction to $H^{sn}(Y)$ is the cup product. And $\Xi$ is natural in the following sense: if $Y'=Y'_1\sx\ldots\sx Y'_n,\{V'_i\subset Y'_{a^i_1}\sx\ldots\sx Y'_{a^i_{m_i}}\}_{i=1}^r$ satisfy the same condition as $Y,\{Y_i\},\{V_i\}$ above, with $\Xi'$ the corresponding map,  
and there are continuous maps $\{f_i:Y'_i\to Y_i\}_{i=1}^r$ such that $(f_{a^i_1}\sx\ldots\sx f_{a^i_{m_i}})(V'_i)\subset V_i$ for all $i$, then $\Xi'\circ(f_1^*\otimes\ldots\otimes f^*_n)=(f_1\sx\ldots\sx f_n)^*\circ\Xi$. 
\end{lmm}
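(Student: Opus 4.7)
The strategy is to construct $\Xi$ at the \v{C}ech cochain level using alternating cochains (which compute the same cohomology as ordinary \v{C}ech cochains, even over $\Z$, and which automatically vanish on any tuple with a repeated argument). Represent each $\sigma_j\in H^s(Y_j)$ by an alternating cocycle $\alpha_j\in\ch{C}^s_{\cU_j}(Y_j;R)$ on some open cover $\cU_j$ of $Y_j$, and order each $\cU_j$ arbitrarily. Since covers of the form $\cU_1\sx\ldots\sx\cU_n$ satisfying $(\dagger)$ are cofinal by assumption, I may refine so that the product cover $\cU:=\cU_1\sx\ldots\sx\cU_n$, given the lexicographic order, satisfies $(\dagger)$. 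With this ordering, the \v{C}ech cup product of the external pullbacks takes the form
\[
(p_1^*\alpha_1\cup\ldots\cup p_n^*\alpha_n)(W_0,\ldots,W_{sn})
=\prod_{j=1}^n \alpha_j\bigl((W_{(j-1)s})_j,(W_{(j-1)s+1})_j,\ldots,(W_{js})_j\bigr),
\]
where $(W_k)_j$ denotes the $j$-th factor of the product open $W_k$.

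The key vanishing step is to show $\alpha:=p_1^*\alpha_1\cup\ldots\cup p_n^*\alpha_n$ lies in the relative subcomplex $\ch{C}^{sn}_\cU(Y,\bigcup_{i=1}^r p_{I_i}^{-1}(V_i);R)$. Suppose $\bigcap_k W_k \cap p_{I_i}^{-1}(V_i)\neq\eset$ for some $i$. Since each $\alpha_j$ is alternating, $\alpha(W_0,\ldots,W_{sn})$ can be nonzero only if for every $j$ the $s+1$ projections $(W_{(j-1)s})_j,\ldots,(W_{js})_j$ are pairwise distinct in $\cU_j$. If this distinctness holds in particular for all $j\in I_i$, then $\bigcap_k W_k\subset\prod_j\bigl((W_{(j-1)s})_j\cap\ldots\cap(W_{js})_j\bigr)$; projecting via $p_{I_i}$ yields a subset of $\prod_{j\in I_i}((W_{(j-1)s})_j\cap\ldots\cap(W_{js})_j)$, which is disjoint from $V_i$ by the all-$s'_j\!=\!s$ case of $(\dagger)$, contradicting the nonemptiness of the intersection with $p_{I_i}^{-1}(V_i)$. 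Hence some projection must repeat, forcing $\alpha(W_0,\ldots,W_{sn})=0$.

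For well-definedness, replacing $\alpha_j$ by a cohomologous representative $\alpha_j+\delta\beta_j$ changes $\alpha$ by a sum of hybrid cochains in which one $\alpha_j$ factor is replaced by $\delta\beta_j$; applying Leibniz, the difference is $\delta$ of a sum of ``one $\beta_j$ inserted'' cup products, each of degree $sn-1$. Evaluating such a cochain uses $s+1$ projections in each $\cU_k$ with $k\neq j$ and only $s$ projections in $\cU_j$, so the same alternating argument, now invoking the case of $(\dagger)$ with exactly one $s'_j=s-1$, shows these hybrid cochains also lie in the relative subcomplex. Thus the difference is a relative coboundary, and the class $[\alpha]\in H^{sn}_\cU(Y,\bigcup_i p_{I_i}^{-1}(V_i);R)$ depends only on the $\sigma_j$'s. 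Standard refinement arguments identify this class across different product covers satisfying $(\dagger)$, and since such covers are cofinal, passing to the direct limit gives a well-defined class in $H^{sn}(Y,\bigcup_i p_{I_i}^{-1}(V_i);R)$, which I declare to be $\Xi(\sigma_1\otimes\ldots\otimes\sigma_n)$. The composition with the restriction $H^{sn}(Y,\bigcup_i p_{I_i}^{-1}(V_i))\to H^{sn}(Y)$ is manifestly the cup product of external pullbacks.

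Naturality follows at the cochain level: given $f_j\!:\!Y'_j\to Y_j$ with $(f_{a^i_1}\!\sx\ldots\sx\!f_{a^i_{m_i}})(V'_i)\subset V_i$, pulling back a product cover of $Y$ gives a product cover of $Y'$, which satisfies the primed $(\dagger)$ after a further refinement if needed, and the \v{C}ech pullback of $\alpha$ equals the cup product of pullback cocycles. The subset condition on the $V'_i$ guarantees this pullback lies in the relative subcomplex for $Y'$, yielding $\Xi'\circ(f_1^*\otimes\ldots\otimes f_n^*)=(f_1\sx\ldots\sx f_n)^*\circ\Xi$. The main technical obstacle is the bookkeeping: arranging consistent orderings and alternating representatives so that the arguments commute through successive refinements and through the pullback maps $f_j$; once this is handled, condition $(\dagger)$ does precisely the work it was designed for, simultaneously in the degree-$s$ form (for the vanishing of $\alpha$) and the degree-$(s-1)$ form (for well-definedness modulo relative coboundaries).
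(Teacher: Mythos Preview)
Your proposal is correct and follows essentially the same approach as the paper's proof: represent each class by a skew-symmetric (alternating) \v{C}ech cocycle on a product cover satisfying $(\dagger)$, form the external cup product, use the all-$s$ case of $(\dagger)$ together with skew-symmetry to get vanishing on $\bigcup_i p_{I_i}^{-1}(V_i)$, use the one-$s'_j=s-1$ case for independence of representatives, and pass to the limit over refinements. One minor remark: the ordering you impose on the covers is unnecessary (skew-symmetry is what does the work), and for naturality the pullback cover $f_1^{-1}\cU_1\times\ldots\times f_n^{-1}\cU_n$ already satisfies the primed $(\dagger)$ directly from $(f_{a^i_1}\times\ldots\times f_{a^i_{m_i}})(V'_i)\subset V_i$, so no further refinement is needed there.
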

\begin{proof}
First we define $\Xi$. Given $\si_1\in H^s(Y_1),\ldots,\si_n\in H^s(Y_n)$, take an open cover $\cU=\cU_1\sx\ldots\sx\cU_n$ of $Y$ satisfying $(\dagger)$ and such that for all $i$, $\si_i$ is represented by some skew-symmetric \v{C}ech cochain $\al_i\in\ch{C}^{s}_{\cU_i}(Y_i)$ (see, e.g., \cite{skewCech} for the equivalence between usual and skew-symmetric \v{C}ech cohomology; in particular, every (usual) \v{C}ech cohomology class has a skew-symmetric cochain representative). Denote by $p_i:Y\to Y_i$ projection to the $i$-th factor. Define 
\begin{equation}\label{alpha_eqn}
\al:=p_1^*\al_1\cup\ldots\cup p_n^*\al_n\in\ch{C}^{sn}_{\cU}(Y).
\end{equation}
We claim $\al|_{\bigcup_{i=1}^rp^{-1}_{I_i}(V_i)}=0$. Suppose $x\in p^{-1}_{I_i}(V_i)$ for some $i$ and 
$$x\in(U^0_1\sx\ldots\sx U^0_n)\cap(U^1_1\sx\ldots\sx U^1_n)\cap\ldots\cap(U^{sn}_1\sx\ldots\sx U^{sn}_n),\qquad U^j_i\in\cU_i,$$
then 
\begin{align*}
\al\big((U^0_1\sx\ldots\sx U^0_n),(U^1_1\sx\ldots\sx U^1_n),\ldots,(U^{sn}_1\sx\ldots\sx U^{sn}_n)\big)(x)\\
=\al_1(U^0_1,\ldots, U^{s}_1)(p_1(x))\cdot\ldots\cdot\al_n(U^{s(n-1)}_n\,\ldots, U^{sn}_n)(p_n(x))
\end{align*}
contains the following factor 
\begin{equation}\label{factor_eqn}
\al_{a^i_1}\big(U^{s(a^i_1-1)}_{a^i_1},\ldots, U^{sa^i_1}_{a^i_1}\big)(p_{a^i_1}(x))\cdot\ldots\cdot\al_{a^i_{m_i}}\big(U^{s(a^i_{m_i}-1)}_{a^i_{m_i}},\ldots, U^{sa^i_{m_i}}_{a^i_{m_i}}\big)(p_{a^i_{m_i}}(x)).
\end{equation}
Since $\al_1,\ldots,\al_n$ are skew-symmetric, for \eref{factor_eqn} to be non-zero, $U^{s(a^i_j-1)}_{a^i_j},\ldots,U^{sa^i_j}_{a^i_j}$ must be pairwise different for all $j=1,\ldots,m_i$. But $p_{I_i}(x)\in V_i$. So by $(\dagger)$, \eref{factor_eqn}$\neq0$ is impossible. This proves $\al|_{\bigcup_{i=1}^rp^{-1}_{I_i}(V_i)}=0$, so $\al\in\ch{C}^{sn}_{\cU}(Y,\bigcup_{i=1}^rp^{-1}_{I_i}(V_i))$. We define $$\Xi(\si_1,\ldots,\si_n)=\tn{the direct image of }[\al]\tn{ in }H^{sn}(Y,\bigcup_{i=1}^rp^{-1}_{I_i}(V_i)).$$
Then $\Xi$ is clearly linear. 

Next we show that $[\al]$ above does not depend on the choices of $\al_i$. Suppose $\al_i,\al'_i\in\ch{C}^s_{\cU_i}(Y_i)$ are both skew-symmetric cocycles, $[\al_i]=[\al'_i]=\si_i$, then there exists a skew-symmetric $\wt{\al}_i\in\ch{C}^{s-1}_{\cU_i}(Y_i)$, $d\wt{\al_i}=\al_i-\al'_i$. Define 
$$\wt{\al}:=p_1^*\al_1\cup\ldots\cup p_{i-1}^*\al_{i-1}\cup p_i^*\wt{\al}_i\cup p_{i+1}^*\al_{i+1}\cup\ldots\cup p_n^*\al_n\in\ch{C}^{sn-1}_{\cU}(Y).$$
By the same argument as above, $\wt\al$ vanishes on $\bigcup_{i=1}^r p^{-1}_{I_i}(V_i)$, so $\wt\al\in\ch{C}^{sn-1}_\cU(Y,\bigcup_{i=1}^r p^{-1}_{I_i}(V_i))$. Since all $\al_j$ are cocycles, 
$$d\wt\al=(-1)^i\big(p_1^*\al_1\cup\ldots\cup d(p_i^*\wt\al_i)\cup\ldots\cup p_n^*\al_n\big)=(-1)^i\big(p_1^*\al_1\cup\ldots\cup p_i^*(\al_i-\al'_i)\cup\ldots\cup p_n^*\al_n\big).$$
Therefore $[\al]$ does not depend on the choice of $\al_i$. 

We then show that $\Xi(\si_1,\ldots,\si_n)$ does not depend on the choices of $\cU_i$. Suppose $\cU'=\cU'_1\times\ldots\times\cU'_n$ also satisfy $(\dagger)$ and for all $i$, $\cU'_i$ is a refinement of $\cU_i$. Let $\{\mu_i:\cU'_i\to\cU_i\}_{i=1}^n$ be some refinement maps. Denote $\mu=(\mu_1,\ldots,\mu_n):\cU'\to\cU$. Then for $U^i_j\in\cU'_j$, 
\begin{align*}
&\mu^*\al\big((U^0_1\sx\ldots\sx U^0_n),(U^1_1\sx\ldots\sx U^1_n),\ldots,(U^{sn}_1\sx\ldots\sx U^{sn}_n)\big)(x)\\
&=\al\big((\mu_1(U^0_1)\sx\ldots\sx\mu_n(U^0_n)),(\mu_1(U^1_1)\sx\ldots\sx\mu_n(U^1_n)),\ldots,(\mu_1(U^{sn}_1)\sx\ldots\sx\mu_n(U^{sn}_n))\big)(x)\\
&=\al_1\big(\mu_1(U^0_1),\ldots,\mu_1(U^s_1)\big)(p_1(x))\cdot\ldots\cdot\al_n\big(\mu_n(U^{s(n-1)}_n),\ldots,\mu_n(U^{sn}_n)\big)(p_n(x))\\
&=\mu_1^*\al_1(U^0_1,\ldots, U^s_1)(p_1(x))\cdot\ldots\cdot\mu_n^*\al_n(U^{s(n-1)_n},\ldots,U^{sn}_n)(p_n(x)).
\end{align*}
This says $\mu^*\al=p_1^*(\mu_1^*\al_1)\cup\ldots\cup p^*_n(\mu_n^*\al_n)\in\ch{C}^{sn}_{\cU'}(Y,\bigcup_{i=1}^r p^{-1}_{I_i}(V_i))$. So, if we define $\Xi(\si_1,\ldots,\si_n)$ using $\cU'$, then it is the direct image of $\mu^*[\al]=[\mu^*\al]\in H^{sn}_{\cU'}(Y,\bigcup_{i=1}^r p^{-1}_{I_i}(V_i))$ which is the same as the direct image of $[\al]$. Now, if we assume $\cU'$ satisfies $(\dagger)$ but not necessarily a refinement of $\cU$, then by the assumption of the lemma we can find a common refinement of $\cU,\cU'$ that also satisfies $(\dagger)$. 
This implies that $\Xi(\si_1,\ldots,\si_n)$ is independent of the choice of $\cU$, and thus $\Xi$ is well-defined. 

Naturality follows immediately from the definition of $\Xi$: let $Y',\{Y_i'\},\{f_i\},\{V'_i\}$ be as in the lemma and $\cU,\al,\al_i$ as in the first paragraph of the proof, then $\cU':=f_1^{-1}\cU_1\sx\ldots\sx f_n^{-1}\cU_n$ satisfies $(\dagger)$. Take $\al_i'=f_i^*(\al_i)\in\ch{C}^s_{\cU'}(Y')$, then $\al'$ defined using \eref{alpha_eqn} is the same as $(f_1\sx\ldots\sx f_n)^*(\al)$ and the conclusion follows. 
\end{proof}
\begin{lmm}\label{X_lmm}
Every open cover of $(\cq)^{E(\Ga)}$ has a refinement of the form 
$$\cU_{e^\Ga_1}\sx\ldots\sx\cU_{e^\Ga_{|E(\Ga)|}}, \tn{ where each }\cU_{e^\Ga_i}\tn{ is an open cover of }\cq$$
such that 
\begin{itemize}
\vspace{-.5cm}
\item[$(*)$]
for all $A$ of type 3, $\si\in\tS_{E(\Ga)}$, $s'_e=d\!-\!1$ or $d\!-\!2$ with at most one being $d\!-\!2$, 
$$\ov{V}^\si_{\!A}\cap\prod_{e\in I_{\si,A}}(U^0_e\cap\ldots\cap U^{s'_e}_{e})=\eset$$
where $U^0_{e},\ldots,U^{s'_e}_{e}$ are any pairwise distinct elements of $\cU_{e}$, for every $e\in I_{\si,A}$. 
\end{itemize}
\end{lmm}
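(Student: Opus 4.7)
The approach is to adapt the Kronheimer--Mrowka-style refinement of Lemma~\ref{refinecover_lmm} to the product/anisotropic setting. First note that every open cover of $(\cq)^{E(\Ga)}$ admits a refinement by product-open basic sets (such sets form a basis of the product topology), that the common refinement of two product covers is again a product cover, and that refining a product cover to a finer product cover preserves property $(*)$, since every thick intersection in the finer product sits inside a thick intersection in the coarser one. Because there are only finitely many pairs $(\si,A)$ with $A$ of type 3, it suffices to handle one such pair at a time and then take a common product refinement (also with the given cover).

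Fix such a pair and set $m=|I_{\si,A}|=|E(\Ga_A)|$ and $K=d|V(\Ga_A)|-d-1$. By Lemma~\ref{:(_lmm} combined with Lemma~\ref{dim_lmm}, $\vgsa{\si}$ is compact with $\dim_t(\vgsa{\si})\le K$. Since every vertex of $\Ga_A$ is at least trivalent, $|V(\Ga_A)|\le 2m/3$, and using $d\ge 3$ one verifies the key inequality
\[
K+1 \;\le\; d(2m/3-1) \;\le\; (d-1)^m \;<\; (d-1)\,d^{\,m-1}.
\]
Writing $n_e(y)$ for the number of elements of $\cU_e$ that contain the $e$-th coordinate of a point $y=(y_e)\in\vgsa{\si}$, a failure of $(*)$ at $y$ in the weakest permitted configuration (one $s'_e$ equal to $d-2$, all others equal to $d-1$) demands $\prod_{e\in I_{\si,A}}n_e(y)\ge(d-1)\,d^{\,m-1}$. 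Hence any product cover whose joint multiplicity $\prod_e n_e(y)$ is at most $K+1$ everywhere on $\vgsa{\si}$ automatically satisfies $(*)$ for this pair.

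To construct such a product cover, I would start from a product-open refinement $\mathcal{W}_0$ of the given open cover, and use metrizability of the ambient space together with compactness of $\vgsa{\si}$ to choose a Lebesgue number for $\mathcal{W}_0$ near $\vgsa{\si}$. By $\dim_t(\vgsa{\si})\le K$ together with \cite[Propositions~12-9 and 9-3]{DT}, $\mathcal{W}_0$ admits a refinement near $\vgsa{\si}$ of order at most $K$. One then inductively shrinks factor covers $\{\cU_e\}_{e\in I_{\si,A}}$ of $\cq$ by the Kronheimer--Mrowka shrinking technique, applied factor by factor, so that the joint multiplicity of the product on $\vgsa{\si}$ drops to at most $K+1$; for $e\notin I_{\si,A}$ one takes $\cU_e$ to be any refinement subordinate to the remaining coordinates of $\mathcal{W}_0$.

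The principal obstacle is this last step: producing a \emph{product} cover whose \emph{joint} multiplicity on the low-dimensional subspace $\vgsa{\si}$ attains the sharp bound $K+1$ coming from its covering dimension. Because $\cq$ itself has covering dimension at least $d-1$, no individual $\cU_e$ can have order below $d-1$ along the projection of $\vgsa{\si}$, so purely factor-wise shrinking yields only $\prod_e n_e(y)\le d^{\,m}$, which is too weak. The shrinkings in the different factors must therefore cooperate, exploiting the large codimension of $\vgsa{\si}\subset(\cq)^{I_{\si,A}}$ guaranteed by the inequality above; carrying this cooperation out rigorously, factor by factor, is the heart of the argument.
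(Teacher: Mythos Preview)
Your proposal has a genuine gap, and you name it yourself: you reduce to producing a product cover whose joint multiplicity $\prod_e n_e(y)$ on $\vgsa{\si}$ is at most $K+1=\dim_t(\vgsa{\si})+1$, but then concede that factorwise shrinking only gives $\prod_e n_e(y)\le d^{m}$ and that making the factors ``cooperate'' is the heart of the argument---an argument you do not supply. In fact your sufficient condition is much stronger than $(*)$: condition $(*)$ only forbids points where one coordinate has multiplicity $\ge d-1$ and all others $\ge d$; it permits, say, two coordinates of multiplicity $d-1$ with the remaining multiplicities arbitrarily large, so products far exceeding $K+1$ are allowed. There is no reason to expect the sharp bound $K+1$ to be attainable by \emph{product} covers at all: covering dimension guarantees low-order refinements among arbitrary covers, not among those constrained to be products of factor covers, and the projections of $\vgsa{\si}$ to individual factors are typically full-dimensional in $S^{d-1}$.

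The paper's route is different and bypasses multiplicity bounds entirely. The key input is that $\vgsa{\si}$ sits inside the smooth compact manifold $(S^{d-1}_\pi)^{I_{\si,A}}$ as the smooth image of a manifold of codimension at least $4$ (codimension $2$ would already suffice). One first reduces, via a Lebesgue-number argument and an enlargement step, to finding \emph{triangulations} $(T_e)_{e\in E(\Ga)}$ of $S^{d-1}$ satisfying a transversality condition $(**)$: every product of closed $0$- and $1$-simplices from the $T_e$ (at most one $1$-simplex) misses $\vgsa{\si}$. This is achieved by a general-position perturbation: start from arbitrary fine triangulations and move their $0$- and $1$-simplices one at a time, using that $\vgsa{\si}$ cannot fill a neighborhood of a point (for $0$-simplices) and that its fiber over a fixed tuple of $0$-simplices in the other factors has dimension $<d-2$ (for $1$-simplices). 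Slightly thickening the top simplices of the perturbed $T_e$ then yields the desired product cover. The ``cooperation between factors'' you flag as essential is precisely this perturbation, but it requires the smooth structure on $S^{d-1}$ and the smooth-image description of $\vgsa{\si}$ from Lemma~\ref{:(_lmm}, not merely its covering dimension.
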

\begin{proof}
Since $A$ is of type 3, $|V(\Ga_A)|\leq 2/3|E(\Ga_A)|$. Since $d\ge3$, $(2d/3)\le d-1$, and $$\dim\,\ov{C}^\quo_{V(\Ga_A)}(\R^d)=d|V(\Ga_A)|-d-1\leq (2d/3)|E(\Ga_A)|-(d+1)\leq \dim\big((S^{d-1})^{E(\Ga_A)}\big)-4.$$ 
So by Lemma \ref{:(_lmm} $\ov{V}^\si_{\!A}$ is the image of a smooth map 
$f_{\si,A}:\ov{C}^\quo_{V(\Ga_A)}(\R^d)\to(S^{d-1})^{I_{\si,A}}$, where the domain is of codimension at least $4$ with respect to the target. (Indeed, codimension $2$ would suffice for this lemma.)

Fix a metric $D$ on $\cq$. 
Let $K_0\subset K_1\subset\ldots$ be a sequence of compact subsets of $\cq$, such that $K_0$ contains a neighborhood of $S^{d-1}_\pi$ and $\bigcup_{i=1}^\infty K_i=\cq$. 

By Lebesgue's Number Lemma, it suffices to show that for any $\epsilon_0,\epsilon_1,\ldots>0$, there exists $(\cU_e)_{e\in E(\Ga)}$ such that for all $e$ and $U\in\cU_e$, there exists $i$ such that $U\subset K_i$ and $\tn{diameter}_D(U)<\ep_i$, satisfying $(*)$. 

We first reduce the lemma to the following statement: for all $\ep>0$, there exist $(\cU_e)_{e\in E(\Ga)}$, where each $\cU_e$ is an open covers of $S_\pi^{d-1}$, such that for all $U\in\bigcup_e\cU_e$, $\tn{diameter}_D(U)<\ep/4$, satisfying $(*)$. 
Suppose this is true. Plug in $\epsilon_0$ for $\ep$. We can enlarge each $U\in\cU_e$ a little bit to get an open subset $l(U)$ of $\cq$, still contained in $K_0$ and of diameter less than $\ep_0$. (For example, take $l(U)=(\{x\in C_2(\pi)|D(x,U)<\ep_0/2\}\cap \mathring{K}_0)\cup U$.) Denote $l(\cU_e)=\{l(U)\}_{U\in\cU_e}$. 
We can cover $\cq-\bigcup_{U\in\cU_e}l(U)$ by locally finitely many open subsets of $C_2(\pi)$, such that each of these open subset is contained in some $K_i$ and is of diameter less than $\ep_i$; 
call this collection of open sets $\cU'_e$. Then for every $e$, $l(\cU_e)\cup\cU'_e$ is an open cover of $\cq$, in which each open set is contained in some $K_i$ and of diameter less than $\ep_i$. Since for every $A$, $\ov{V}^\si_{\!A}$ is contained in $(S^{d-1}_\pi)^{I_{\si,A}}$ whereas elements in $\cU'_e$ do not intersect $S^{d-1}_\pi$, that $(*)$ is satisfied by $\big(l(\cU_e)\cup\cU'_e\big)_{e\in E(\Ga)}$ follows from that it is satisfied by $(\cU_e)_{e\in E(\Ga)}$. 

We then reduce the statement at the beginning of last paragraph to the following: for any $\ep>0$ there exist triangulations $(T_e)_{e\in E(\Ga)}$ of $S^{d-1}$, compatible with the smooth structure on $S^{d-1}$, such that the diameter of each simplex is less than $\ep/4$, satisfying 
\begin{itemize}
\vspace{-.5cm}
\item[$(**)$]
for all $\si$, $A$ of type 3, and all collections of simplices $(S_{e}:\tn{a simplex in }T_e)_{e\in I_{\si,A}}$ such that $S_e$ is of dimension 0 or 1, at most one of dimension 1, 
$\ov{V}^\si_{\!A}\cap\prod_{e\in I_{\si,A}}\ov{S}_e=\eset.$
\end{itemize}
\vspace{-.5cm}
If this is true, we can take $\cU_e$ to be obtained from $T_e$ by slightly enlarging each top-dimensional simplex $S$ to an open neighborhood $U(S)$ of its closure, so that, 
\begin{itemize}
\vspace{-.5cm}
\item[(1)] (for a non-top dimensional simplex $S$, still denote by $U(S)=\bigcap_{S'}U(S')$ where $S'$ runs through the top-dimensional simplices that $S$ is a face of,) 
$$\ov{V}^\si_{\!A}\cap\prod_{e\in I_{\si,A}}U(S_e)=\eset \ \tn{ whenever }\  \ov{V}^\si_{\!A}\cap\prod_{e\in I_{\si,A}}\ov{S}_e=\eset;$$
\vspace{-.4cm}
\item[(2)] for any finite collection, $S_1,\ldots,S_k$, of simplices in $T_e$, $S_1\cap\ldots\cap S_k=\eset\implies U(S_1)\cap\ldots\cap U(S_k)=\eset$.
\end{itemize}
\vspace{-.5cm}
(These are clearly easy to satisfy. For (2), for every $S$ in $T_e$, take $U(S)$ contained in the union of the stars of the barycenters of its faces in the barycentric subdivision of $T$. For (1), take $U(S)$ contained in the $\ep'$-neighborhood of $S$, where $\ep'$ is the minimal distance between some $\ov{V}^\si_{\!A}$ and the union of products of closed simplices $\ov{V}^\si_{\!A}$ does not intersect.) We also require $U(S)$ to be contained in the $\ep/8$-neighborhood of $S$. Then $\{\cU_e\}$ satisfies $(*)$. 

It remains to prove the statement at the beginning of the last paragraph. Take arbitrary triangulations $(T^0_e)_{e\in E(\Ga)}$ of $S^{d-1}$ with diameter smaller than $\ep/8$, we perturb them simplex by simplex to satisfy $(**)$. The point is that every time we perturb a simplex away from $\ov{V}^\si_{\!A}$, we do it so slightly that no new unwanted intersection appears. The rest of the proof consists of technical details of this. 

Below by ``distance'' we mean the restriction of $D$ to $S^{d-1}_\pi$ and its products. 
By a triangulation $T$ of $S^{d-1}$ we mean a homeomorphism $T:|K_T|\to S^{d-1}$, where $K_T$ is a finite simplicial complex and $|K_T|$ its realization. 
Given $\cT=(T_e)_{e\in E(\Ga)}$ a tuple of triangulations of $S^{d-1}$, denote 
\vspace{-.2cm}
$$\ep(\cT)=\tn{minimal distance between }\prod_{e\in I_{\si,A}}\ov{S}_e\tn{ and }\ov{V}^\si_{\!A},\vspace{-.2cm}$$
where the ``minimal'' is taken over all $\si,A$, $(\ov{S}_e)_{e\in I_{\si,A}}$ such that $\ov{V}^\si_{\!A}\cap\prod_{e\in I_{\si,A}}\!\ov{S}_e=\eset$. There are only finitely many of them. 
And $\ep(\cT)>0$. 
Denote by $\cJ_1(\cT)$ (resp. $\cJ_0(\cT)$) the set of tuples $(\si,A,(x_e)_{e\in I_{\si,A}})$ such that $x_e$ is the image of a 0- or 1-simplex of $T_{e}$, exactly one of them (resp. none of them) being a 1-simplex, and $\vgsa{\si}\cap\prod_{e\in I_{\si,A}}\!x_e\neq\eset$. Then $\cJ_0(\cT),\cJ_1(\cT)$ are finite. 

Now take arbitrary triangulations $\cT^0=(T^0_e)_{e\in E(\Ga)}$ such that the diameter of any simplex is smaller than $\ep/8$. Denote by $\ep'$ the smallest diameter of a simplex of $\cT^0$. 
For the rest of the paragraph, for all $\cT$, define $\ep'(\cT)=\min\{\ep'/4,\ep(\cT)\}$. 
Take an element $(\si,A,(x_e)_{e\in I_{\si,A}})\in\cJ_0(\cT)$. Since $\vgsa{\si}$ cannot cover a neighborhood of $\prod_{e\in I_{\si,A}}\!x_e$, we can find $\prod_{e\in I_{\si,A}}\!x'_e$ in the $\ep'(\cT^0)/2$-neighborhood of it that does not meet $\vgsa{\si}$. For each $e$ we can perturb the triangulation map $T^0_{e}$ in a neighborhood of $(T^0_{e})^{-1}(x_e)$ to get $T^1_{e}$, in such a way that $T^1_{e}\big((T^0_{e})^{-1}(x_e)\big)=x'_e$, and the distance between old and new images of any point is less than $\ep'(\cT^0)/|E(\Ga)|$. For $e\notin I_{\si,A}$, define $T^1_e=T^0_e$. Define $\cT^1=(T^1_e)_{e\in E(\Ga)}$. Then $|\cJ_0(\cT^1)|<|\cJ_0(\cT^0)|$, $|\cJ_1(\cT^1)|\leq|\cJ_1(\cT^0)|$. We do this one by one for all elements of $\cJ_0(\cT^0)$, getting new triangulations $\cT^k=(T^k_e)_{e\in E(\Ga)}$ by the end. Now take $(\si,A,(x_e)_{e\in I_{\si,A}})\in\cJ_1(\cT^k)$ where $e_*\in I_{\si,A}$ is such that $x_{e_*}$ is the (image of a) 1-simplex. Let $p\!:\!\vgsa{\si}\subset (S^{d-1})^{I_{\si,A}}\to (S^{d-1})^{I_{\si,A}-e_*}$ be the projection forgetting the $e_*$-factor. Recall $f_{\si,A}\!:\!\ov{C}^\quo_{V(\Ga_A)}(\R^d)\to(S^{d-1})^{I_{\si,A}}$ defined at the beginning of this proof. By possibly perturbing $(x_e)_{e\in I_{\si,A}-e_*}$ in the same way as above, we can assume that $(x_e)_{e\in I_{\si,A}-e_*}$ is a regular value of $p\circ f_{\si,A}$. So $\vgsa{\si}\cap p^{-1}((x_e)_{e\in I_{\si,A}-e_*})\subset S^{d-1}$ is the smooth image of a manifold of dimension less than $d-2$. So we can perturb $T^k_{e_*}$ in a neighborhood of $(T^k_{e_*})^{-1}(x_{e_*})$, fixing some neighborhoods of the 0-simplices (since $\cJ_0=\eset$ now), to get a new triangulation $T^{k+1}_{e_*}$, so that $T^{k+1}_{e_*}\big((T^k_{e_*})^{-1}(x_{e_*})\big)$ does not intersect $\vgsa{\si}\cap p^{-1}((x_e)_{e\in I_{\si,A}-e_*}),$
and the new and old image of any point has distance less than $\ep'(\cT^k)/|E(\Ga)|$. For $e\in I_{\si,A}$ where $x_e$ is not perturbed, or $e\notin I_{\si,A}$, define $T^{k+1}_{e}=T^k_{e}$. Define $\cT^{k+1}=(T^{k+1}_e)_{e\in E(\Ga)}$. Then $|\cJ_1(\cT^{k+1})|<|\cJ_1(\cT^k)|$. Keep doing this one by one for all elements in $\cJ_1(\cT^k)$. By the end, for some $l$, we obtain a tuple of triangulations $\cT^{k+l}$ satisfying $(**)$. 
\end{proof}

\subsection{Defining Kontsevich's characteristic classes}\label{defK_sec}

To ``push forward'' $\Om_\Ga(\pi)$ to a cohomology class on the base $B$, the Leray-Serre spectral sequence is a convenient tool to formulate it. We follow \cite{HatcherSS} for the definition of Leray-Serre spectral sequence. 
First we make a general definition. 

\subsubsection{Cohomology push-forward}\label{cohomologypushforward_sec}
Suppose $B$ is a CW complex and $X\xra{\pi}B$ a fiber bundle with fiber $F$. Denote by $B_p$ the $p$-skeleton of $B$ and $X_p=\pi^{-1}(B_p)$. 
Suppose there is $k_0>0$ such that $H^{k}(F)=0$ for all $k>k_0$. Then, for any integers $n$ and $p<n-k_0$, in the Leray-Serre spectral sequence for $X_{p}\xra{\pi}B_p$, $E^{a,b}_2=0$ for all $a+b=n$, so $H^n(X_p)=0$. 

The Leray-Serre spectral sequence for $X\xra{\pi}B$ tells us the following (cf. \cite[Theorem 5.15]{HatcherSS}; note here we use the local coefficient version; cf. \cite{SteenrodLocalCoef}): suppose $n\geq k_0\in\Z$,
\begin{itemize}
\vspace{-.5cm}
\item $H^n(X)$ has a filtration by subgroups 
$F^n_p=\ker\big(H^n(X)\to H^n(X_{p-1})\big)$
and $E^{p,n-p}_\i\approx F^n_p/F^n_{p+1}$; 
\item $E_2^{p,q}\approx H^p(B;H^{q}(F))$, where the latter is understood as cohomology with local coefficients; 
\item $d_r\!:E_r^{p,q}\to E_r^{p+r,q+1-r}$.
\end{itemize}
\vspace{-.5cm}
Suppose $r\geq2$. 
Since $H^k(F)=0$ for all $k>k_0$, $$E_2^{n-k_0-r,k_0+r-1}\approx H^{n-k_0-r}\big(B,H^{k_0+r-1}(F)\big)=\{0\}.$$
Since $E_r^{p,q}$ is obtained from $E_2^{p,q}$ by taking subgroups and quotients, $E_r^{n-k_0-r,k_0+r-1}=\{0\}$ for all $r\ge2$. Therefore, all the $d_r$'s mapping into $E_r^{n-k_0,k_0}$ vanish and $E_\i^{n-k_0,k_0}$ is a subgroup of 
$E_2^{n-k_0,k_0}\approx H^{n-k_0}(B;H^{k_0}(F)).$
Since $H^n(X_{n-k_0-1})=0$, $H^{n}(X)=F^{n}_{n-k_0}$. This identifies a map (which we denote by $\pi_*$) \begin{equation}\label{spec0_eqn}
\pi_*:H^{n}(X)\lra H^{n-k_0}(B;H^{k_0}(F)).\end{equation}
\begin{dfn}
We call $\pi_*$ {\it cohomology push-forward} of the fiber bundle $X\xra{\pi}B$. 
\end{dfn}
By the naturality of Leray-Serre spectral sequence \cite[page~537-538]{HatcherSS}, $\pi_*$ does not depend on the choice of the CW structure on $B$, and is natural: suppose $X'\xra{\pi'} B'$ is another fiber bundle with fiber $F'$ such that $H^{k}(F')=0$ for all $k>k_0$ and $(\tl{f}:X'\to X,f:B'\to B)$ is a bundle map (so $H^{k_0}(F')\approx f^*H^{k_0}(F)$ as local systems over $B'$), then $f^*\circ\pi_*=\pi'_*\circ\tl{f}^*$. Using CW approximation, $\pi_*$ can be generalized to the case where $B$ is an arbitrary space. 

The above procedure can be generalized to the relative version: $X\xra{\pi}B$ has a subbundle $Y\xra{\pi}B$ with fiber $A\subset F$. Replacing $F$ with the pair $(F,A)$ and $X$ with $(X,Y)$ everywhere, everything goes through without change. 


\begin{rmk}\label{explicitpushforward_rmk}
(This remark will be used later in Section \ref{comparisonproof_subsec}.)
An explicit description of $\pi_*$ can be obtained by carefully unwinding the definition; we follow \cite{HatcherSS} for the construction of Leray-Serre spectral sequences and specifically, what we do below comes from the diagram on page 526 and $\Phi$ in the proof of Theorem 5.3 in \cite{HatcherSS}. Denote $p_0=n-k_0$ for simplicity. Given an element $\si\in H^n(X)$, first restrict it to $H^n(X_{p_0})$; the image will lie in the image of $H^n(X_{p_0},X_{p_0-1})$. Take such a preimage, say $\si'$. For each $p_0$-cell $e\!:(D^{p_0},\prt D^{p_0})\to (X_{p_0},X_{p_0-1})$ ($D^{p_0}$ being the standard $p_0$-dimensional ball), 
$e^*\si'\in H^n\big(e^*(X),(e|_{\prt D^{p_0}})^*(X)\big)$. 
Since $e^*\pi$ is trivializable, K\"{u}nneth formula gives 
$$H^n\big(e^*(X),(e|_{\prt D^{p_0}})^*(X)\big)\approx H^{p_0}(D^{p_0},\prt D^{p_0})\otimes H^{k_0}(F_b),$$
where $F_b=\pi^{-1}(e(b))$ is the fiber over an arbitrarily fixed point $b\in\mathring D^{p_0}$. Other fibers over $D^{p_0}$ are identified with $F_b$ via the trivialization. Notice that the K\"{u}nneth isomorphism does not depend on the choice of the trivialization of $e^*\pi$: the K\"{u}nneth map is determined by the projection map $e^*(X)\to F_b$ (note that $\prt D^{p_0}$ is not involved here); any two trivializations give homotopic projection maps since $D^{p_0}$ is contractible. Let us denote by $\si(e)\in H^{k_0}(F_b)$ the image of $e^*\si'$ under the K\"{u}nneth map, where $H^{p_0}(D^{p_0},\prt D^{p_0})\approx R$ identified using the canonical orientation on $D^{p_0}$. Then $\{e\to\si(e)\}_{e}$, where $e$ ranges through all $p_0$-cells of $B$, gives a cellular cochain on $B$ with coefficients in the local system $H^{k_0}(F)$. It is a cocycle and represents a cohomology class in $H^{p_0}(B;H^{k_0}(F))$, which is $\pi_*(\si)$. The relative version is similar. 
\end{rmk}
\subsubsection{Defining Kontsevich's characteristic classes}
\label{Kdfn_subsubsec}
Applying the construction above to the fiber bundle $(X_\Ga(\pi),S(\pi))\xra{\pi_X} B$ with $k_0=d|V(\Ga)|$, we get
\begin{equation}\label{spec_eqn}
{\pi_X}_*:H^{|E(\Ga)|(d-1)}\big(X_\Ga(\pi),S(\pi)\big)\lra H^{|E(\Ga)|(d-1)-d|V(\Ga)|}\big(B;H^{d|V(\Ga)|}(\tcM,S)\big).\end{equation}
The map $\rho:H^{d|V(\Ga)|}(X_\Ga,S)\to R$ in Definition \ref{rho_dfn} induces the corresponding map of local systems on $B$. 
So we get an induced map 
\begin{equation}\label{coeffientprojection_eqn}
H^{*}\big(B;H^{d|V(\Ga)|}\!(\tcM,S)\big)
\lra H^{*}(B;R).
\end{equation}

\begin{dfn}\label{K_dfn}
Define 
$K_{\Ga,\pi,F}\in H^{|E(\Ga)|(d-1)-d|V(\Ga)|}(B;R)$, to be the image of $\Om_\Ga(\pi)$ under \eref{spec_eqn} and \eref{coeffientprojection_eqn}. 
\end{dfn}
The corollary below is a direct consequence of Proposition \ref{naturalclass_prp} and the naturality of cohomology push-forward. 
\begin{crl}\label{K_crl}
Under the assumptions of Theorem \ref{main_thm}, $K_{\Ga,\pi',F'}=h_B^*K_{\Ga,\pi'',F''}$. 
\end{crl}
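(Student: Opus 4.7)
The plan is to chase $\Om_\Ga(\pi'')$ around the naturality square for cohomology push-forward, using Proposition \ref{naturalclass_prp} to handle the propagator class, the naturality statement of the Leray–Serre construction recorded in Section \ref{cohomologypushforward_sec} to handle $(\pi_X)_*$, and the $\cG$-equivariance of $\rho$ (Definition \ref{rho_dfn}) to handle the coefficient map \eqref{coeffientprojection_eqn}.

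More concretely, I first invoke Lemma \ref{Gbundlemaps_lmm} to get $\cG$-bundle maps, in particular the map of pairs $(\tl{h}_X,\tl{h}_S)\!:(X_\Ga(\pi'),S(\pi'))\!\to\!(X_\Ga(\pi''),S(\pi''))$ covering $h_B\!:B'\!\to\!B''$. Proposition \ref{naturalclass_prp} already gives $\tl{h}_X^*\Om_\Ga(\pi'')=\Om_\Ga(\pi')$, so the input class pulls back correctly. The cohomology push-forward is natural along bundle maps (this is stated right below \eqref{spec0_eqn}), hence the square
\[
\begin{tikzcd}
H^{|E(\Ga)|(d-1)}(X_\Ga(\pi''),S(\pi''))\rar["(\pi''_X)_*"]\dar["\tl{h}_X^*"'] &
 H^{*}\!\bigl(B'';H^{d|V(\Ga)|}(X_\Ga,S)\bigr)\dar["h_B^*"] \\
H^{|E(\Ga)|(d-1)}(X_\Ga(\pi'),S(\pi'))\rar["(\pi'_X)_*"'] &
 H^{*}\!\bigl(B';H^{d|V(\Ga)|}(X_\Ga,S)\bigr)
\end{tikzcd}
\]
commutes, where on the right I use that $\tl{h}_X$ induces an isomorphism of the local coefficient systems fiberwise (each fiber map is an orientation-preserving homeomorphism that respects all the auxiliary structure, hence arises from a $\cG$-action on the model fiber).

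The final step is to pass from the $H^{d|V(\Ga)|}(X_\Ga,S)$-coefficient systems to the constant coefficient system $R$ via the map induced by $\rho$. Since $\rho$ is $\cG$-equivariant with $\cG$ acting trivially on $R$, it defines a morphism of local systems on each base, and these morphisms are intertwined by $h_B$; therefore the coefficient-projection map \eqref{coeffientprojection_eqn} also fits into a naturality square with $h_B^*$. Chaining the two commutative squares and feeding $\Om_\Ga(\pi'')$ through both routes gives $h_B^*K_{\Ga,\pi'',F''}=K_{\Ga,\pi',F'}$.

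The only step that is not purely formal is the identification of the local coefficient systems; everything else is bookkeeping. The subtle point is that the transition maps of the bundle $H^{d|V(\Ga)|}(X_\Ga,S)$ over $B$ live in $\cG$ acting on the fiber cohomology, so one needs $\tl{h}_X$ to be a $\cG$-bundle map (not merely a fiberwise homeomorphism) to get an actual isomorphism of local systems; this is exactly what Lemma \ref{Gbundlemaps_lmm} provides, and it is why the theorem was phrased in terms of $\cG$-bundle maps in Remark \ref{Gfr_rmk}. Once this identification is in place and $\rho$ is recognized as a $\cG$-equivariant morphism, the corollary is immediate.
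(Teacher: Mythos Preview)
Your proposal is correct and takes essentially the same approach as the paper, which states the corollary as ``a direct consequence of Proposition \ref{naturalclass_prp} and the naturality of cohomology push-forward.'' Your write-up simply unpacks that one-line justification, making explicit the commutative square for $(\pi_X)_*$ and the role of the $\cG$-equivariance of $\rho$ in identifying the local coefficient systems.
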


\section{Equivalence with the original definition}\label{equiv_sec}

In this section the coefficient ring $R=\R$. All open covers are assumed to be locally finite. The goal of this section is to prove the following 
\begin{prp}\label{compare_prp}
Suppose $(E\xra{\pi}B,F)$ is a framed smooth $(M,\i)$ bundle over a smooth manifold $B$, then $K_{\Ga,\pi,F}$ defined above agrees, up to scaling by a constant depending only on $\Ga$, with the usual definition of Kontsevich's characteristic classes for $(E\xra{\pi}B,F)$; see, for example, \cite{WatanabeAdd} for definition. 
\end{prp}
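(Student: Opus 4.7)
My plan is to represent both constructions in de~Rham cohomology and match them via fiber integration, so that the comparison reduces to a combinatorial identity over the chambers of $\tcM$. Pick a smooth closed $(d-1)$-form $\om$ on $\ccE{2}$ with $\om|_{\prt^v\ccE{2}}=F^*\tn{vol}$ for a unit-volume form on $S^{d-1}$, and normalized so that $\tau^*\om=(-1)^d\om$; this is a propagator in Kontsevich's sense. Because $F^*\tn{vol}$ is pulled back from $S^{d-1}$, $\om$ is constant on equivalence classes of $\sim_F$ along $\prt^v\ccE{2}$, hence descends to a singular $(d-1)$-cocycle on $\cq$. By the uniqueness characterization of $\Om(\pi)$ (its restriction to $\Sp$ is the Poincar\'e dual of the point class), this cocycle represents $\Om(\pi)$. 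Consequently $\Om_\Ga(\pi)\in H^{|E(\Ga)|(d-1)}\bigl(X_\Ga(\pi),S(\pi)\bigr)$ is represented by the differential form $\bigwedge_{e\in E(\Ga)}f_e^*\om$ on $X_\Ga(\pi)$, which vanishes on $S(\pi)$ by the very dimension count used in Lemma~\ref{X_lmm}: on $\phi(\si)\bigl(\ov\cS^\Ga_A\bigr)$ with $A$ of type~3, the subproduct $\bigwedge_{e\in I_{\si,A}}p_e^*\om$ factors through $\vgsa{\si}$ and has degree strictly exceeding $\dim\vgsa{\si}$.

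Next, I would identify the composite of the Leray--Serre push-forward ${\pi_X}_*$ with the coefficient projection induced by $\rho$ in de~Rham terms. Over a smooth CW base $B$, the cell-by-cell K\"unneth description of ${\pi_X}_*$ in Remark~\ref{explicitpushforward_rmk} coincides with fiber integration of de~Rham forms valued in the local system $H^{d|V(\Ga)|}(\tcM,S)$; the relative version applies because $\bigwedge_ef_e^*\om$ vanishes on $S(\pi)$. Definition~\ref{rho_dfn} then takes the fiber class to the integral of $\wt f^*\bigl(\bigwedge_ep_e^*\om\bigr)$ over $\wt X_\Ga$. By Lemma~\ref{chambers_lmm} and Definition~\ref{Xtl_dfn}, $\wt f$ is a covering of signed degree $|\Au(\Ga)|^\pm_d$ from $\bigsqcup_\si C_\Ga(M,\i)^{@\si}$ onto the main stratum $\rxg$, while the complement has positive codimension and so contributes nothing to the integral. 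Thus the composition takes $\Om_\Ga(\pi)$ to the de~Rham class on $B$ represented by fiber integration of $\bigwedge_ef_e^*\om$ over $\rxg$, multiplied by the combinatorial factor $|\Au(\Ga)|^\pm_d$.

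Finally, compare with the classical integral $\int_{\ccE{V(\Ga)}/B}\bigwedge_ef_e^*\om$. Since $f_\Ga$ restricts to an orientation-preserving diffeomorphism $C_{V(\Ga)}(\pi)\to C_\Ga(\pi)$, and the classical vanishing arguments for boundary strata of types~1--4 (recalled in Section~\ref{outline_sec}) show that only the main stratum contributes to the resulting class on $B$, the classical class is represented by $\int_{C_\Ga(\pi)/B}\bigwedge_ef_e^*\om$. Each of the $|\tS_{E(\Ga)}|/|\Au(\Ga)|$ chambers $\phi(\si)(C_\Ga(M,\i))$ of $\rxg$ contributes the same integral up to a sign $\sgn(\si)^{d-1}\sgn'(\si)^d$ arising from the permutation of $(d-1)$-forms together with $\tau^*\om=(-1)^d\om$, and this is precisely the sign cancelled by the orientation twist $(-1)^{(d-1)\sgn(\si)+d\sgn'(\si)}$ in Definition~\ref{Xtl_dfn}. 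Summing over chambers shows that $K_{\Ga,\pi,F}$ equals the classical Kontsevich class times the constant $c(\Ga,d)=|\Au(\Ga)|^\pm_d\cdot|\tS_{E(\Ga)}|/|\Au(\Ga)|$, which depends only on $\Ga$ and $d$.

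The main obstacle is the careful sign bookkeeping: verifying that the orientation twists of $\wt X_\Ga$, the ordering conventions in the cup product defining $\Om_\Ga$, the $\tau^*$-behaviour of the propagator, and the orientation of the fundamental class of $\wt X_\Ga$ combine consistently so that every chamber of $\rxg$ contributes equally. A secondary point is justifying fiber integration of the relative class on $\bigl(X_\Ga(\pi),S(\pi)\bigr)$ over $B$; the cell-by-cell trivialization of Remark~\ref{explicitpushforward_rmk}, together with the vanishing of the representative on $S(\pi)$ established in step one, makes this routine.
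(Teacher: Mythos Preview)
Your overall strategy matches the paper's: both pass to de~Rham representatives and identify $K_{\Ga,\pi,F}$ with a fiber integral over $\wt X_\Ga(\pi)$, then compare this with the classical integral over $\ccE{V(\Ga)}$. The combinatorial accounting via chambers and the covering $\wt f$ is also the right picture.

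However, there is a genuine gap in your first step. The claim that $\bigwedge_e f_e^*\om$ ``represents $\Om_\Ga(\pi)$'' as a class in $H^{|E(\Ga)|(d-1)}\bigl(X_\Ga(\pi),S(\pi)\bigr)$ is not justified by the pointwise vanishing you describe. The space $X_\Ga(\pi)$ is not a smooth manifold, so there is no de~Rham model for its (relative) cohomology to appeal to; and while the form pulls back to zero on each smooth stratum $\ov\cS_A(\pi)$ of type~3, it does \emph{not} vanish as a cochain in any neighborhood of $S(\pi)$, which is what a relative \v Cech representative requires. This matters downstream: the map $\rho$ factors through $H^{d|V(\Ga)|}_c(\wt X_\Ga)$, and your candidate $\wt f^*\bigl(\bigwedge_e p_e^*\om\bigr)$ is not compactly supported on $\wt X_\Ga(\pi)$ (its ends approach all of $T_1$, not just the type~3 part), so it does not give a class there. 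Relatedly, your assertion that $\om$ ``descends to a singular cocycle on $\cq$'' is not correct as stated, since simplices in $\cq$ need not lift through the quotient $q$.

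The paper closes this gap by working from the \v Cech side and then transporting to de~Rham in a support-preserving way. One starts with a \v Cech cocycle $\ch\om$ representing $\Om(\pi)$ on $\cq$, forms the cup product $\ch\om'_\Ga$ on $(\cq)^{E(\Ga)}$, and then refines the cover (using Lemma~\ref{X_lmm} together with Lemma~\ref{refinecover_lmm}) so that the resulting cocycle vanishes on a neighborhood $N$ of $S(\pi)\cup\bigl(T(\pi)\cap X_{p_0}\bigr)$. Applying the \v Cech--de~Rham operator $h^{\hat\ul}$ (which preserves supports) produces a de~Rham form $\bar\om_\Ga$ supported away from $N$, hence genuinely representing the relative class, and one checks $[\bar\om_\Ga]=[\om_\Ga]$ in $H^*\bigl(\ccEft,S(\pi)\bigr)$ where $\om_\Ga$ is the honest wedge of propagators. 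A short Stokes argument (Lemma~\ref{Kclasscoho_lmm}) then shows $\pi^s_*\bar\om_\Ga$ and $\pi^s_*\om_\Ga$ are cohomologous on $B$. With $\bar\om_\Ga$ supported away from $N$, the pullback $\wt f^*\bar\om_\Ga$ \emph{is} compactly supported on $\pi_{\wt X}^{-1}(B_{p_0})$, and the cell-by-cell identification of Remark~\ref{explicitpushforward_rmk} goes through. Your ``secondary point'' is thus not routine; it is precisely where the work lies.
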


\subsection{\texorpdfstring{\v{C}}{C}ech to de Rham preliminary}\label{cechtoderham_sec}
First we state some general facts translating \v{C}ech to de Rham cohomology.
Let $Y$ be a smooth manifold. Denote by $\cA^q_Y$ the sheaf of differential $q$-form germs on $Y$, $\cZ^q_Y$ the subsheaf of closed $q$-form germs and $A^q(Y)$ the space of global $q$-forms on $Y$. Let $\cU$ be an open cover of $Y$. Let $\underline{l}=\{l_U:Y\to\R^{\ge0}\}_{U\in\cU}$ be a partition of unity subordinate to $\cU$. For any $p,q\in\Z^{\geq0}$ define
\begin{gather*}
h_{p,q}^{\underline{l}}:\ch{C}^p_\cU(Y;\cA^q_Y)\lra\ch{C}^{p-1}_\cU(Y;\cA^{q+1}_Y)\\
h_{p,q}^{\underline{l}}(\si)(U_0,\ldots, U_{p-1})=(-1)^p\sum_{U\in\cU} d\big(l_U\cdot\si(U,U_0,\ldots,U_{p-1})\big). 
\end{gather*}
By $l_U\cdot\si(U,\ldots,U_{p-1})$ we mean a form on $U_0\cap\ldots\cap U_{p-1}$ which is given by this formula on $U\cap U_0\cap\ldots\cap U_{p-1}$ and 0 elsewhere; it is smooth since $l_U$ vanishes in a neighborhood of $\prt U$. Clearly $\tn{image}(h_{p,q}^{\underline{l}})\subset\ch{C}^{p-1}_\cU(Y;\cZ^{q+1}_Y)$. 
For each $p$, define
$$h^{\underline{l}}:\ch{C}^p_\cU(Y;\R)\lra\ch{C}^0_{\cU}(Y;\cZ^p_Y),\qquad h^{\underline{l}}(\si)=(-1)^p\,(h^{\underline{l}}_{1,p-1}\circ h^{\underline{l}}_{2,p-2}\ldots\circ h^{\underline{l}}_{p,0})(\si).$$
By \cite[Proposition 9.8]{BottTu}, if $\si\in\ch{C}^p_\cU(Y;\R)$ is a \v{C}ech cocycle, then $h^\ul(\si)$ is a global closed form; if $\cU$ is such that any finite intersection of elements has trivial cohomology, then $\si\to h^\ul(\si)$ induces the canonical isomorphism between $\ch{H}^p(Y;\R)$ and $H^p_{\tn{de Rham}}(Y)$. If $\Phi$ is a family of supports on $Y$, the arguments in \cite[Section 8]{BottTu} still go through if all differential forms are assumed to have supports in $\Phi$ (i.e.,~ $\ch{C}^p_\cU$ is replaced with its subspace of $\Phi$-supported cochains $\ch{C}^p_{\cU,\Phi}(Y;\cA^q_Y)$); so $h^\ul$ still induces the canonical isomorphism between $\ch{H}^p_\Phi(Y;R)$ and $H^p_{\tn{de Rham},\Phi}(Y)$, where $H^p_{\tn{de Rham},\Phi}(Y)$ is the cohomology of the cochain complex of $\Phi$-supported differential forms on $Y$. 

Suppose $\cU,\cU'$ are two open covers of $Y$ and $\mu:\cU\to\cU'$ is a refinement. Let $\ul=\{l_U\}_{U\in\cU}$ be a partition of unity subordinate to $\cU$, then 
$$\mu_*\ul:=\Big\{l_{U'}:=\sum_{\begin{subarray}{c}U\in\cU\\ \mu(U)=U'\end{subarray}}l_U:Y\lra\R\Big\}_{U'\in\cU'}$$
is a partition of unity subordinate to $\cU'$. 
It is easy to check that 
\begin{equation}\label{hmucommute_eqn}
h^{\mu_*\ul}_{p,q}(\mu^*\si)=\mu^*(h^\ul_{p,q}(\si)) \tn{ for all } p,q \tn{ and }\si\in\ch{C}^p_{\cU'}(Y;\cA^q_Y).
\end{equation}

Define cup product 
\begin{gather*}
\cup:\ch{C}^{p_1}_\cU(Y;\cA^{q_1}_Y)\otimes\ch{C}^{p_2}_\cU(Y;\cA^{q_2}_Y)\lra\ch{C}^{p_1+p_2}_\cU(Y;\cA^{q_1+q_2}_Y)\\
(\si_1\cup\si_2)(U_0,\ldots,U_{p_1+p_2})=
(-1)^{q_1p_2}
\si_1(U_0,\ldots, U_{p_1})\wedge\si_2(U_{p_1},\ldots, U_{p_1+p_2}),
\end{gather*}
where the two forms on the RHS are restricted to $U_0\cap\ldots\cap U_{p_1+p_2}$. For simplicity we omit the notation for restriction; same below. 
When restricted to $\ch{C}^{p_1}_\cU(Y;\cZ^{0}_Y)\otimes\ch{C}^{p_2}_\cU(Y;\cZ^{0}_Y)$ this is the usual cup product for \v{C}ech cochains.

\begin{lmm}\label{hcup_lmm}
Let $Y=Y_1\times\ldots\times Y_m$ be a product of smooth manifolds. Denote by $\pi_i:Y\to Y_i$ the projection to the $i$-th factor. For every $i=1,\ldots,m$, let $\cU_i$ be an open cover of $Y_i$ and $\ul_i=\{l_U\}_{U\in\cU_i}$ be a partition of unity. Denote $\cU=\cU_1\times\ldots\times\cU_m$ the product open cover of $Y$, then 
$$\ul=\big\{l_{U_1\times\ldots\times U_m}:=(l_{U_1}\circ\pi_1)\cdot\ldots\cdot (l_{U_m}\circ\pi_m):Y\lra\R\big\}_{U_1\in\cU_1,\ldots,U_m\in\cU_m}$$
is a partition of unity of $Y$ subordinate to $\cU$. 
Let $p=\sum_{i=1}^mp_i$, $q=\sum_{i=1}^mq_i$ be non-negative integers. 
Let $\si_i\in\ch{C}^{p_i}_{\cU_i}(Y_i;\cZ^{q_i}_{Y_i})$ be \v{C}ech cocycles. Define 
$$\si=\pi_1^*\si_1\cup\ldots\cup\pi_m^*\si_m\in\ch{C}^p_\cU(Y;\cZ^{q}_Y).$$
Then, 
\begin{enumerate}
\vspace{-.5cm}
\itemsep=0em
\item if $m=2$,  
$$h^{\ul}_{p,q}(\si)=\begin{cases}
\pi_1^*h^{\ul_1}_{p_1,q_1}(\si_1)\cup\pi_2^*(\si_2), \tn{ if }p_1>0,\\
\pi_1^*\si_1\cup\pi_2^*h^{\ul_2}_{p_2,q_2}(\si_2), \tn{ if }p_1=0;
\end{cases}$$
\item if $q_i=0$ for all $i$, $h^\ul(\si)=\pi_1^*h^{\ul_1}(\si_1)\wedge\ldots\wedge \pi_m^*h^{\ul_m}(\si_m).$
\vspace{-.5cm}
\end{enumerate}
\end{lmm}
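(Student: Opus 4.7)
Part 1 (the case $m=2$) is a direct computation. Writing a general element of $\cU = \cU_1 \times \cU_2$ as $W = U\times V$, so that $l_W = \pi_1^* l_U \cdot \pi_2^* l_V$, the cup product formula gives
\[
l_W\cdot\sigma\big(W \cap W_0 \cap \ldots \cap W_{p-1}\big) = (-1)^{q_1 p_2}\,\pi_1^*\big(l_U\,\sigma_1(U\cap U_0\cap\ldots\cap U_{p_1-1})\big)\wedge \pi_2^*\big(l_V\,\sigma_2(V_{p_1-1}\cap\ldots\cap V_{p-1})\big).
\]
Applying $d$, using closedness of $\sigma_1,\sigma_2$ and the Leibniz rule, produces two summands. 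When $p_1 > 0$, summing over $V$ kills the second summand via $\sum_V dl_V = 0$ and, via $\sum_V l_V = 1$, turns the first into $\sum_U \pi_1^* d\big(l_U\,\sigma_1(U\cap\ldots)\big)\wedge \pi_2^*\sigma_2(\ldots)$, matching $\pi_1^* h^{\ul_1}_{p_1,q_1}(\sigma_1)\cup \pi_2^*\sigma_2$ after the sign check $p + q_1 p_2 \equiv (q_1+1)p_2 + p_1 \pmod 2$. When $p_1 = 0$ the roles swap: summing over $U$, the first summand vanishes by $\sum_U dl_U = 0$ together with the $0$-cocycle identity $\sigma_1(U)=\sigma_1(U_0)$ on $U\cap U_0$, while the second reduces to $(-1)^{q_1}\pi_1^*\sigma_1(U_0)\wedge \pi_2^*\sum_V d\big(l_V\,\sigma_2(V\cap\ldots)\big)$, which matches $\pi_1^*\sigma_1\cup \pi_2^* h^{\ul_2}_{p_2,q_2}(\sigma_2)$ via $q_1(p_2+1)\equiv q_1(p_2-1)\pmod 2$.

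Part 2 follows by iterating part 1 and inducting on $m$. Group the last $m-1$ factors as $Y' = Y_2\times\ldots\times Y_m$, with projections $\pi : Y \to Y'$ and $\pi'_i : Y'\to Y_i$, and write $\sigma = \pi_1^*\sigma_1 \cup \pi^* \tau$ where $\tau = (\pi'_2)^*\sigma_2\cup\ldots\cup (\pi'_m)^*\sigma_m$. Applying the operators $h^{\ul}_{p-k,k}$ in order for $k=0,1,\ldots,p-1$, part 1 for the splitting $Y = Y_1\times Y'$ shows inductively that after the $k$-th step the cochain has the form $\pi_1^*\alpha_k \cup \pi^*\beta_k$, and the operator peels one derivative off whichever of the two factors still has positive \v{C}ech degree. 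The first $p_1$ steps replace $\alpha_0 = \sigma_1$ by $\alpha_{p_1} = (-1)^{p_1}\,h^{\ul_1}(\sigma_1)$ while $\beta$ stays equal to $\tau$; the remaining $p-p_1$ steps then replace $\beta$ by $(-1)^{p-p_1}\,h^{\ul'}(\tau)$. Combining with the $(-1)^p$ prefactor in the definition of $h^{\ul}$ and noting that the cup product of $0$-cochains with closed-form values is the wedge product, we obtain $h^{\ul}(\sigma) = \pi_1^* h^{\ul_1}(\sigma_1)\wedge \pi^* h^{\ul'}(\tau)$. Induction on $m$ (base case $m=1$ trivial) then rewrites $\pi^* h^{\ul'}(\tau)$ as $\pi_2^* h^{\ul_2}(\sigma_2)\wedge\ldots\wedge \pi_m^* h^{\ul_m}(\sigma_m)$, completing the proof.

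The main obstacle is sign bookkeeping. The one substantive check, which one should not skip, is that when the iteration for part 2 crosses over from peeling the first factor to peeling the second, the $p_1=0$ branch of part 1 becomes available only because the intermediate cochain $\alpha_{p_1}$ on $Y_1$ is a \v{C}ech $0$-cocycle; this holds automatically since, up to sign, $\alpha_{p_1} = h^{\ul_1}(\sigma_1)$ is a global closed form on $Y_1$ and hence a $0$-cocycle with values in $\cZ^{p_1}_{Y_1}$.
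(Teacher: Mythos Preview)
Your proposal is correct and follows essentially the same approach as the paper's proof: a direct computation for part~(1) using the Leibniz rule together with $\sum_U l_U = 1$ and $\sum_U dl_U = 0$, followed by an iteration of part~(1) and induction on $m$ for part~(2). You even flag explicitly the one subtle point the paper leaves implicit, namely that the $p_1=0$ branch of part~(1) is applicable at the crossover step because the intermediate $0$-cochain $h^{\ul_1}(\sigma_1)$ is a global closed form and hence a \v{C}ech $0$-cocycle.
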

\begin{proof}
This is direct computation. For (1), when $p_1>0$, 
\begin{align*}
&h^\ul_{p,q}(\si)(U^0_1\times U^0_2,\ldots, U^{p-1}_1\times U^{p-1}_2)\\
&=(-1)^p\sum_{U_1\in\cU_1,U_2\in\cU_2}\big(
(\pi_1^*dl_{U_1})(l_{U_2}\circ\pi_2)+(l_{U_1}\circ\pi_1)(\pi^*_2dl_{U_2})\big)\wedge\\
&\hspace{3cm}(-1)^{q_1p_2}\pi_1^*\si_1(U_1, U_1^0,\ldots, U_1^{p_1-1})\wedge\pi_2^*\si_2(U_2^{p_1-1},\ldots, U_2^{p-1})\\
&=(-1)^{p+q_1p_2}\bigg(\Big(
\sum_{U_1}(\pi_1^*dl_{U_1})\wedge\pi_1^*\si_1(U_1,\ldots,U_1^{p_1-1})\Big)\wedge\Big(\sum_{U_2}(l_{U_2}\circ\pi_2)\pi_2^*\si_2(U_2^{p_1-1},\ldots, U_2^{p-1})\Big)\\
&+\Big(\sum_{U_1}(-1)^{q_1}(l_{U_1}\circ\pi_1)\pi_1^*\si_1(U_1,\ldots, U_1^{p_1-1})\Big)\wedge\Big(\underbrace{\sum_{U_2}(\pi_2^*dl_{U_2})}_{=0}\wedge\pi_2^*\si_2(U_2^{p_1-1},\ldots, U_2^{p-1})\Big)
\bigg)\\
&=(-1)^{p+p_1+q_1p_2}\pi_1^*h^{\ul_1}_{p_1,q_1}(\si_1)(U^0_1,\ldots, U^{p_1-1}_1)\wedge\pi_2^*\si_2(U_2^{p_1-1},\ldots, U_2^{p-1})\\
&=(\pi_1^*h^{\ul_1}_{p_1,q_1}(\si_1)\cup\pi_2^*\si_2)(U^0_1\times U^0_2,\ldots,U^{p-1}_1\times U^{p-1}_2). 
\end{align*}
When $p_1=0$, 
\begin{align*}
&h^\ul_{p,q}(\si)((U^0_1\times U^0_2),\ldots, (U^{p-1}_1\times U^{p-1}_2))\\
&=(-1)^{p+q_1p_2}\hspace{-.5cm}\sum_{U_1\in\cU_1,U_2\in\cU_2}\hspace{-.5cm}\big(
(\pi_1^*dl_{U_1})(l_{U_2}\circ\pi_2)+(l_{U_1}\circ\pi_1)(\pi^*_2dl_{U_2})\big)\wedge
\pi_1^*\si_1(U_1)\wedge\pi_2^*\si_2(U_2, U_2^{0},\ldots, U_2^{p-1})\\
&=(-1)^{p+q_1p_2}\bigg(\Big(
\underbrace{\sum_{U_1}(\pi_1^*dl_{U_1})}_{=0}\wedge\pi_1^*\si_1(Y_1)\Big)\wedge\Big(\sum_{U_2}(l_{U_2}\circ\pi_2)\pi_2^*\si_2(U_2, U_2^{0},\ldots, U_2^{p-1})\Big)\\
&\quad+\Big(\sum_{U_1}(-1)^{q_1}(l_{U_1}\circ\pi_1)\pi_1^*\si_1(Y_1)\Big)\wedge\Big(\sum_{U_2}(\pi_2^*dl_{U_2})\wedge\pi_2^*\si_2(U_2, U_2^{0},\ldots, U_2^{p-1})\Big)
\bigg)\\
&=(-1)^{p+q_1+p_2+q_1p_2}\pi_1^*\si_1(Y_1)\wedge\pi_2^*h^{\ul_2}_{p_2,q_2}(\si_2)(U_2, U_2^{0},\ldots, U_2^{p-1})\\
&=(\pi_1^*\si_1\cup\pi_2^*h^{\ul_2}_{p_2,q_2}(\si_2))(U^0_1\times U^0_2,\ldots,U^{p-1}_1\times U^{p-1}_2);
\end{align*}
notice that we can write $\si_1(Y_1)$ because $\si_1$ is a degree 0 \v{C}ech cocycle. 

For (2), in the case $m=2$, 
\begin{align*}
h^\ul(\si)=&(-1)^p\,(h^\ul_{1,p-1}\circ\ldots\circ h^\ul_{p,0})(\pi_1^*\si_1\cup\pi_2^*\si_2)\\
=&(-1)^{p}\,(h^\ul_{1,p-1}\circ\ldots\circ h^\ul_{p_2,p-p_2})\big(\pi_1^*(h^{\ul_1}_{1,p_1-1}\circ\ldots\circ h^{\ul_1}_{p_1,0})(\si_1)\cup\pi_2^*\si_2\big)\\
=&(-1)^{p+p_1}\,(h^\ul_{1,p-1}\circ\ldots\circ h^\ul_{p_2,p-p_2})\big(\pi_1^*h^{\ul_1}(\si_1)\cup\pi_2^*\si_2\big)\\
=&(-1)^{p+p_1}\,\big(\pi_1^*h^{\ul_1}(\si_1)\cup\pi_2^*(h^{\ul_2}_{1,p_2-1}\circ\ldots\circ h^{\ul_2}_{p_2,0})\si_2\big)\\
=&\pi_1^*h^{\ul_1}(\si_1)\cup \pi_2^*h^{\ul_2}(\si_2)=\pi_1^*h^{\ul_1}(\si_1)\wedge \pi_2^*h^{\ul_2}(\si_2). 
\end{align*}
The general case follows by induction. 
\end{proof}

We next check that $h$ is natural. Let $f:X\to Y$ be a smooth map between smooth manifolds. For an open cover $\cU$ on $Y$ with partition of unity $\ul=\{l_U\}_{U}$, $f^*(\cU):=\{f^{-1}(U)\}_{U\in\cU}$ is an open cover of $X$ and $f^*\ul:=\{l_{f^{-1}(U)}:=l_U\circ f:X\to\R^{\ge0}\}_{U\in\cU}$ is a partition of unity. We have the pull-back map $f^*:\ch{C}^p_\cU(Y;\cZ^q_Y)\to\ch{C}^p_{f^*\cU}(X;\cZ^q_X)$, and 
\begin{align}\label{hnatural_eqn}
(f^*h^\ul\si)(f^{-1}(U_0),\ldots,& f^{-1}(U_{p-1}))=\sum_{U\in\cU}(-1)^pd\big((l_U\circ f)\wedge f^*(\si(U, U_0,\ldots, U_{p-1}))\big)\nonumber\\
&=\sum_{U\in\cU}(-1)^pd\big(l_{f^{-1}(U)}\cdot(f^*\si)(f^{-1}(U),f^{-1}(U_0),\ldots,f^{-1}(U_{p-1}))\big)\nonumber\\
&=(h^{f^*\ul}f^*\si)(f^{-1}(U_0),\ldots,f^{-1}(U_{p-1})).
\end{align}
Notice that everything in this subsection works for cohomology with supports as well. 

\subsection{Proof of Proposition \texorpdfstring{\ref{compare_prp}}{5.1}}
\label{comparisonproof_subsec}

\subsubsection{Careful choices of open cover refinements in \texorpdfstring{$\ov{C}^{E(\Ga)}_2(\pi)$}{the ambient space}}
\label{521_subsubsec}
Let $\cU'$ be an open cover of $\cq$ such that there exists $\ch\om\in\ch{C}^{d-1}_{\cU'}(\cq)$ a cocycle representative of $\Om(\pi)$ ($\Om(\pi)$ is defined in Definition \ref{propagator_dfn}).
We will make a careful choice of an open cover and a partition of unity on $\ov{C}_2(\pi)$. Note the following commutative square, where $\hat\io,\io$ are inclusion maps: 
\[ \begin{tikzcd}
\prt^v\ov{C}_2(\pi)\rar["\hat\io"]\dar["F"]&\ov{C}_2(\pi)\dar["q"]\\
S^{d-1}\rar["\io"]&\cq. 
\end{tikzcd}\]
\begin{lmm}\label{C2cover_lmm}
There exist an open cover $\cU$ of $\ov{C}_2(\pi)$ such that all non-empty intersections of its elements are contractible,
a refining map $\mu:\cU\to q^*\cU'$, and partitions of unity $\ul$ on $\ov{C}_2(\pi)$ subordinate to $\cU$, $\ul_S$ on $S^{d-1}$ subordinate to $\io^*\cU'$, such that $\hat\io^*(\mu_*\ul)=F^*\ul_S$ (they are both subordinate to $\hat\io^*q^*\cU'=F^*\io^*\cU'$ on $\prt^v\ov{C}_2(\pi)$). 
\end{lmm}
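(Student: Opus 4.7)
The strategy is to construct $\ul$ in two parts: a ``boundary--lifted'' summand supported in a collar of $\prt\ov{C}_2(\pi)$ that pulls back $\ul_S$ via $F$ and a collar projection, plus an ``interior'' summand supported off the boundary. Since interior functions vanish on $\prt\ov{C}_2(\pi)$, they do not affect the boundary restriction, so the compatibility $\hat\io^*(\mu_*\ul)=F^*\ul_S$ reduces to an identity on the boundary--lifted summand, which will hold by construction. The refining map $\mu$ sends each boundary--lifted open set directly to the $q^{-1}(U')$ containing it, and interior elements to their refinement targets.

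\textbf{Construction.} First pick any partition of unity $\ul_S=\{l^S_{\io^{-1}(U')}\}_{U'\in\cU'}$ on $S^{d-1}$ subordinate to $\io^*\cU'$, possible by paracompactness. Since $\pi$ is smooth over a manifold $B$, the vertical boundary $\prt\ov{C}_2(\pi)$ admits a smooth collar $N\cong\prt\ov{C}_2(\pi)\sx[0,\ep)$ with projection $\pi_N\!:\!N\to\prt\ov{C}_2(\pi)$ (built cell--by--cell along a triangulation of $B$, as in the parenthetical hint preceding the statement). Because $q=\io\circ F$ on $\prt\ov{C}_2(\pi)$ and $q$ is continuous, paracompactness together with local finiteness of $\cU'$ let us shrink $\ep$ fiberwise so that
\[
\pi_N^{-1}\bigl(\tn{supp}(l^S_{\io^{-1}(U')}\circ F)\bigr)\subset q^{-1}(U')\qquad\forall\,U'\in\cU'.
\]
Pick a smooth cutoff $\chi\!:\!\ov{C}_2(\pi)\to[0,1]$ with $\chi\equiv1$ on a smaller sub--collar and $\tn{supp}(\chi)\subset N$, and set $f_{U'}:=\chi\cdot(l^S_{\io^{-1}(U')}\!\circ\! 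F\!\circ\!\pi_N)$ on $N$, extended by $0$ elsewhere. Then $\sum_{U'}f_{U'}=\chi$, $f_{U'}|_{\prt\ov{C}_2(\pi)}=l^S_{\io^{-1}(U')}\circ F$, and $\tn{supp}(f_{U'})\subset q^{-1}(U')$. On $\ov{C}_2(\pi)-\{\chi\equiv1\}$, pick a locally finite good cover $\{V_\al\}$ by geodesically convex balls for a Riemannian metric (which, on the collar, we take to be the product metric $g_\prt\oplus dt^2$), refining $q^*\cU'$ via a choice $\al\mapsto U'_\al$, with subordinate partition $\{\tilde g_\al\}$; set $g_\al:=(1-\chi)\tilde g_\al$, so $\sum_\al g_\al=1-\chi$. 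Finally take $\cU:=\{W_{U'}\}_{U'\in\cU'}\cup\{V_\al\}$, where $W_{U'}$ is an open neighborhood of $\tn{supp}(f_{U'})$ lying in $q^{-1}(U')\cap N$ and obtained as the $\pi_N$--pullback of a good--cover element of $\prt\ov{C}_2(\pi)$ around $\tn{supp}(l^S_{\io^{-1}(U')}\circ F)$. Define $\mu(W_{U'})=q^{-1}(U')$, $\mu(V_\al)=q^{-1}(U'_\al)$, $l_{W_{U'}}=f_{U'}$, $l_{V_\al}=g_\al$. Then $(\mu_*\ul)_{q^{-1}(U')}=f_{U'}+\sum_{\al:U'_\al=U'}g_\al$; restricting to $\prt\ov{C}_2(\pi)$ kills the $g_\al$--terms and returns $l^S_{\io^{-1}(U')}\circ F$, giving $\hat\io^*(\mu_*\ul)=F^*\ul_S$.

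\textbf{Main obstacle.} The delicate point is verifying that $\cU$ is a good cover. Intersections among the $V_\al$ are contractible by geodesic convexity; the issue is intersections among the $W_{U'}$'s and mixed intersections $W_{U'}\cap V_\al$. Using the collar--product structure and the product metric on $N$, each such intersection becomes a product of a contractible piece on $\prt\ov{C}_2(\pi)$ with an interval, \emph{provided} the boundary--side good cover whose $\pi_N$--pullbacks define the $W_{U'}$ is itself a good cover on $\prt\ov{C}_2(\pi)$ and its intersections with the $V_\al$--traces are contractible. The real technical work is in arranging compatible refinements: one needs a good cover of $\prt\ov{C}_2(\pi)$ refining $F^*\io^*\cU'$ whose $\pi_N$--pullbacks stay inside the corresponding $q^{-1}(U')$, and an interior good cover that meshes cleanly with the collar structure. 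This is bookkeeping--intensive rather than genuinely obstructed, since $\prt\ov{C}_2(\pi)$ is a smooth manifold (with corners) and admits good covers with any prescribed refinement property, and the product metric on the collar makes all the nested intersections product--contractible.
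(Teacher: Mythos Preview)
Your overall strategy---lift $\ul_S$ through $F$ and a collar projection, cut off with $\chi$, fill in the interior with $(1-\chi)$---is the same as the paper's, and the compatibility check $\hat\io^*(\mu_*\ul)=F^*\ul_S$ is correct. The gap is in your construction of the cover $\cU$ itself.

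You take \emph{one} open set $W_{U'}$ per $U'\in\cU'$, required to contain $\tn{supp}(f_{U'})$ and to be (the collar pullback of) a ``good--cover element'' of $\prt\ov{C}_2(\pi)$. But $\tn{supp}(f_{U'})\cap\prt\ov{C}_2(\pi)=F^{-1}\bigl(\tn{supp}(l^S_{\io^{-1}(U')})\bigr)$, and $F:\prt\ov{C}_2(\pi)\to S^{d-1}$ has huge, typically disconnected and non--contractible fibers (over the diagonal stratum alone each fiber is a copy of $E-s_\i$). So no single contractible neighborhood can contain this support; the sets $W_{U'}$ you describe simply do not exist. Your ``Main obstacle'' paragraph flags the good--cover verification as bookkeeping, but the obstruction is earlier, at the level of the index set: you cannot have $\cU$ good while keeping exactly one boundary--type element per $U'\in\cU'$.

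The paper separates the two requirements. First it builds a partition of unity $\ul'=\{l'_U\}_{U\in\cU'}$ subordinate to the \emph{coarse} cover $q^*\cU'$ with the correct boundary restriction (this is your $f_{U'}$ step plus an interior correction and normalization). Only then does it refine: for each $U$ it covers $\tn{supp}(l'_U)$ by \emph{many} geodesically convex sets $\{V^U_i\}_i$, sets $\cU=\{V^U_i\}_{U,i}$ and $\mu(V^U_i)=q^{-1}(U)$, and splits $l'_U=\sum_i\psi^U_i\,l'_U$ with $\psi^U_i$ supported in $V^U_i$. This guarantees both that $\cU$ is good (convex intersections) and that $\mu_*\ul=\ul'$ exactly, so the boundary identity survives the refinement. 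Your argument becomes correct if you replace each $W_{U'}$ by such a family and split $f_{U'}$ accordingly; as written, the one--set--per--$U'$ indexing cannot satisfy the good--cover condition.
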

\begin{proof}
By the way $F$ is defined, it can be extended to a smooth map $F:N_\prt\to S^{d-1}$, where $N_\prt\subset\ov{C}_2(\pi)$ is a neighborhood of $\prt^v\ov{C}_2(\pi)$. Still denote by $\hat\io:N_\prt\to\ov{C}_2(\pi)$ the inclusion. Let $\ul_S$ be a partition of unity on $S^{d-1}$ subordinate to $\io^*\cU'$. 
Then $F^*\ul_S=:\{F^*\ul_S(U)\}_{U\in\cU'}$, where $F^*\ul_S(U)$ is supported on $q^{-1}(U)\cap N_\prt$,  
is a partition of unity on $N_\prt$ subordinate to $\hat\io^*q^*\cU'$. 
Our next goal is to find a partition of unity $\ul'$ on $\ov{C}_2(\pi)$ subordinate to $q^*\cU'$, such that $\hat\io^*\ul'=F^*\ul_S$. 

We can find a smooth function $g: \ov{C}_2(\pi)\to \R^{\ge0}$ supported in $N_\prt$ such that $g|_{\prt^v{C}_2(\pi)}\equiv1$: 
let $K\subset C_2(\pi)$ be a compact subset containing $C_2(\pi)-N_\prt$, and let $N'_\prt\subsetneq\ov{C}_2(\pi)-K$ be a neighborhood of $\prt^v\ov{C}_2(\pi)$; 
let $\{V_i\subset \ov{C}_2(\pi)-N'_\prt\}_{i\in I}$ be an open cover of $K$; 
let $\{{g_i}\}_{i\in I}\sqcup\{g'\}$, where $g_i$ is supported in $V_i$ and $g'$ is supported in $C_2(\pi)-K$, be a partition of unity on $C_2(\pi)$ subordinate to $\{V_i\}_i\sqcup\{C_2(\pi)-K\}$, 
then $(\sum_ig_i)|_K\equiv1$ and $(\sum_ig_i)|_{N'_\prt\cap C_2(\pi)}\equiv0$; 
so $g:=1-\sum_ig_i$, extended by 1 to $\prt^v\ov{C}_2(\pi)$, satisfies the requirement. 
For each $U\in\cU'$, define $h_U=g\cdot F^*\ul_S(U):N_\prt\to\R$, then it can be smoothly extended to the entire $\ov{C}_2(\pi)$, taking value 0 out of $N_\prt$; 
so  $h_U|_{\prt^v\ov{C}_2(\pi)}=F^*\ul_S(U)|_{\prt^v\ov{C}_2(\pi)}$, $h_U$ is supported in $q^{-1}(U)$ and $(\sum_Uh_U)|_{N'_\prt}\equiv1$. 

Let $K'\subset C_2(\pi)$ be a compact subset containing $C_2(\pi)-N'_\prt$. Let $\{G_U\subset q^{-1}(U)\cap C_2(\pi)\}_{U\in\cU'}$ be compact subsets that still cover $K'$. For each $U\in\cU'$, take $\phi_U:\ov{C}_2(\pi)\to\R^{\ge0}$ that is supported in $q^{-1}(U)\cap C_2(\pi)$ and $\phi_U|_{G_U}\equiv1$. Then $\sum_{U\in\cU'}(\phi_U+h_U)$, as a function on $\ov{C}_2(\pi)$, is positive everywhere and equals to $1$ on $\prt^v\ov{C}_2(\pi)$. Define $l'_U=(\phi_U+h_U)/(\sum_U(\phi_U+h_U))$. Then $l':=\{l'_U\}_{U\in\cU'}$ is a partition of unity as required. 

Next we define $\cU$. Fix a Riemannian metric on $\ov{C}_2(\pi)$ (which is a smooth manifold with boundary and corners). 
For every $U\in\cU'$, let $\{V^U_i\subset q^{-1}(U)\subset\ov{C}_2(\pi)\}_{i\in I_U}$ be a finite collection of geodesically convex open subsets, 
such that $\tn{supp}(l'_U)\subset\bigcup_{i\in I_U}V^U_i$. Then $\cU:=\{V^U_i\}_{U\in \cU',i\in I_U}$ is an open cover of $\ov{C}_2(\pi)$ whose intersections are all contractible. 
Define $\mu:\cU\to q^{*}\cU'$, $\mu(V^U_i)=q^{-1}(U)$; it is a refinement map. 

Now, we construct a partition of unity $\ul$ on $\ov{C}_2(\pi)$ subordinate to $\cU$ such that $\mu_*\ul=\ul'$.
Using the same argument used to find the $g_i$s above, for every $U\in\cU'$ we can find smooth functions $\psi^U_i:\ov{C}_2(\pi)\to\R^{\ge0}$ supported in $V^U_i$, for every $i\in I_U$, such that $(\sum_{i\in I_U}\psi^U_i)|_{\tn{supp}(l'_U)}\equiv1$. Define $l^U_i=\psi^U_i\cdot l'_U$. Then $\sum_{i\in I_U}l^U_i=l'_U$. So $\ul:=\{l^U_i\}_{U\in\cU',i\in I_U}$ is a partition of unity as required. 
\end{proof}
\begin{crl}
$h^\ul(\mu^*q^*\ch\om)$ is a closed $(d\!-\!1)$-form on $\ov{C}_2(\pi)$ such that  $\hat\io^*h^\ul(\mu^*q^*\ch\om)=F^*\al$ for some closed form $\al\in A^{d-1}(S^{d-1})$, $\int_{S^{d-1}}\al=1$. In other words, $h^\ul(\mu^*q^*\ch\om)$ is a propagator. 
\end{crl}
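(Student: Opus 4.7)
The plan is to verify the three claims---closedness of $h^\ul(\mu^*q^*\ch\om)$, the boundary identity $\hat\io^*h^\ul(\mu^*q^*\ch\om)=F^*\al$, and the normalization $\int_{S^{d-1}}\al=1$---by propagating $\ch\om$ through the commutative square $q\circ\hat\io=\io\circ F$ using the naturality properties of $h$ established in Section \ref{cechtoderham_sec}. Closedness is immediate: $\mu^*q^*\ch\om$ is a \v{C}ech cocycle because $\ch\om$ is, and \cite[Proposition 9.8]{BottTu} then gives that $h^\ul$ applied to a cocycle produces a global closed $(d-1)$-form on $\ov{C}_2(\pi)$.

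For the boundary identity, I will chain three naturality steps. First, refinement naturality of $h$ (end of Section \ref{cechtoderham_sec}) shows that $h^\ul(\mu^*q^*\ch\om)=h^{\mu_*\ul}(q^*\ch\om)$ as global $(d-1)$-forms on $\ov{C}_2(\pi)$. Second, pullback naturality of $h$ along the smooth inclusion $\hat\io$ yields
$$\hat\io^*h^{\mu_*\ul}(q^*\ch\om)=h^{\hat\io^*\mu_*\ul}(\hat\io^*q^*\ch\om).$$
Third, substitute $\hat\io^*\mu_*\ul=F^*\ul_S$ from the preceding lemma together with $\hat\io^*q^*\ch\om=F^*\io^*\ch\om$ (from commutativity of the square), and then apply pullback naturality of $h$ in reverse along $F$ to conclude
$$\hat\io^*h^\ul(\mu^*q^*\ch\om)=h^{F^*\ul_S}(F^*\io^*\ch\om)=F^*\bigl(h^{\ul_S}(\io^*\ch\om)\bigr).$$
Setting $\al:=h^{\ul_S}(\io^*\ch\om)$, which is a closed $(d-1)$-form on $S^{d-1}$ by a further application of \cite[Proposition 9.8]{BottTu} to the cover $\io^*\cU'$, produces the desired boundary identity.

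For the normalization, the de Rham class $[\al]\in H^{d-1}(S^{d-1};\R)$ is by construction the image of the \v{C}ech class $\io^*[\ch\om]=\io^*\Om(\pi)$ under the canonical \v{C}ech-to-de Rham comparison (taking a good-cover refinement of $\io^*\cU'$ allows us to invoke the isomorphism statement of \cite[Proposition 9.8]{BottTu}, and both the $h$-construction and the class $\io^*\Om(\pi)$ are refinement-invariant). By the defining property of $\Om(\pi)$ and the orientation on $S^{d-1}_\pi$ induced from that of $M$, $\io^*\Om(\pi)$ is Poincar\'e dual to the point class in $S^{d-1}$, so $\int_{S^{d-1}}\al=1$. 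The main obstacle is simply ensuring that the refinement and pullback identities at the end of Section \ref{cechtoderham_sec} are applied as \emph{equalities} of honest global forms, not merely up to coboundary; this is legitimate because $h^\ul$ lands in $\ch C^0(\cdot;\cZ^{d-1})$, and a degree-$0$ cocycle with values in the closed-form sheaf \emph{is} a bona fide global closed form, so the cochain-level identities there transfer directly to equalities of $(d-1)$-forms.
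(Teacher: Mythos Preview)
Your proof is correct and follows essentially the same approach as the paper: the same chain of equalities
\[
\hat\io^*h^\ul(\mu^*q^*\ch\om)=\hat\io^*h^{\mu_*\ul}(q^*\ch\om)=h^{\hat\io^*\mu_*\ul}(\hat\io^*q^*\ch\om)=h^{F^*\ul_S}(F^*\io^*\ch\om)=F^*h^{\ul_S}(\io^*\ch\om),
\]
followed by the observation that $[h^{\ul_S}(\io^*\ch\om)]$ is the de Rham image of $\io^*\Om(\pi)$ and hence integrates to $1$. Your additional remark that the refinement and pullback identities for $h$ hold at the level of global forms (because degree-$0$ cocycles in $\cZ^{d-1}$ are genuine closed forms) is a welcome clarification of a point the paper leaves implicit.
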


\begin{proof}
That it is a closed $(d\!-\!1)$-form is clear. And 
$$\hat\io^*h^\ul(\mu^*q^*\ch\om)\stackrel{(\ref{hmucommute_eqn})}{=}
\hat\io^*h^{\mu_*\ul}(q^*\ch\om)\stackrel{(\ref{hnatural_eqn})}{=}
h^{\hat\io^*\mu_*\ul}(\hat\io^*q^*\ch\om)\stackrel{\tn{Lemma }\ref{C2cover_lmm}}{=}
h^{F^*\ul_S}(F^*\io^*\ch\om)\stackrel{(\ref{hnatural_eqn})}{=}
F^*h^{\ul_S}(\io^*\ch\om).$$
Since $[\ch\om]=\Om(\pi)\in H^{d-1}(\cq)$ and $h^{\ul_S}$ induces the canonical isomorphism between \v{C}ech and de Rham cohomology, $[h^{\ul_S}(\io^*\ch\om)]\in H^{d-1}(S^{d-1};\R)$ is the Poincar\'e dual of the point class by the definition of $\Om(\pi)$. So $\int_{S^{d-1}}h^{\ul_S}(\io^*\ch\om)=1$. 
\end{proof}

Now, let $\{\cU'_e\}_{e\in E(\Ga)}$ be a collection of open covers of $\cq$ given by Lemma \ref{X_lmm}. For every $e$, applying the above argument with $\cU'$ replaced by $\cU'_e$, we get an open cover $\cU_e$ with refinement map $\mu_e:\cU_e\to q^*\cU'_e$ and partition of unity $\ul^e$ on $\ccEt$ subordinate to $\cU_e$, such that $h^{\ul^e}(\mu_e^*q^*\ch{\om})$ is a propagator. Denote $\om_e=h^{\ul^e}(\mu_e^*q^*\ch{\om})$. 
Denote by $\pr_e:\ov{C}_2(\pi)^{E(\Ga)}\to\ov{C}_2(\pi)$
the projection to the $e$-th factor. (Recall $\ov{C}^{E(\Ga)}_2(\pi)$ is the fiber product and $\ov{C}_2(\pi)^{E(\Ga)}$ is the direct product of the total space.) 
Denote 
$$\wt\cU:=\pr_{e^\Ga_1}^*{\cU_{e^\Ga_1}}\times\ldots\times \pr^*_{e^\Ga_{|E(\Ga)|}}\!\!\cU^\Ga_{e^\Ga_{|E(\Ga)|}}$$
the product open cover on $\ov{C}_2(\pi)^{E(\Ga)}$. 
Define (that it is supported away from $S(\pi)$ follows from the choice of $\{\cU'_e\}$ given by Lemma \ref{X_lmm})
$$\ch\om_\Ga':=\pr_{e^\Ga_1}^*\mu_{e^\Ga_1}^*\com\cup \pr_{e^\Ga_2}^*\mu_{e^\Ga_2}^*\com\cup\ldots\cup \pr^*_{e^\Ga_{|E(\Ga)|}}\mu_{e^\Ga_{|E(\Ga)|}}^*\com\in
\ch{C}^{|E(\Ga)|(d-1)}_{\wt\cU}\big(\ccEt^{E(\Ga)},S(\pi)\big);$$
its restriction to $X_\Ga$ represents the class $\Om_\Ga(\pi)\in H^{|E(\Ga)|(d-1)}(X_\Ga(\pi),S(\pi))$.
Define
$$\om_\Ga=\pr_{e^\Ga_1}^*\om_{e^\Ga_1}\wedge\ldots\wedge\pr_{e^\Ga_{|E(\Ga)|}}^*\om_{e^\Ga_{|E(\Ga)|}}\in A^{|E(\Ga)|(d-1)}(\ccEt^{E(\pi)});$$
then the push-forward of $\om_\Ga|_{C_\Ga(\pi)}$ to $B$  represents Kontsevich's class in the usual definition. 
By Lemma \ref{hcup_lmm}, $\om_\Ga=h^{\tl\ul}\ch{\om}'_\Ga$, 
where $\tl\ul$ is the partition of unity on $\ccEt^{E(\Ga)}$ subordinate to $\wt\cU$ given by taking ``product'' of the $\ul^e$s as in Lemma \ref{hcup_lmm}. 
Below we denote the restriction of $\om_\Ga$ to $\ccEft$ still by $\om_\Ga$. 

Fix a triangulation on $B$ and denote by $B_p$ the $p$-skeleton of $B$ with respect to this triangulation. 
Denote 
$$p_0=|E(\Ga)|(d-1)-d|V(\Ga)|=\deg \Om_\Ga(\pi)-\dim\, X_\Ga.$$
Recall that $f=(f_e)_{e\in E(\Ga)}:\ov{C}_{V(\Ga)}(\pi)\to\ccEft$ is the forgetful map, and $\phi$ is the factor-permuting action of $\wt{S}_{E(\Ga)}$ on $\ccEft$. 
Denote by 
$$\pi_V:\ov{C}_{V(\Ga)}(\pi)\lra B,\qquad\wh\pi:\ov{C}_2^{E(\Ga)}(\pi)\lra B,\qquad \pi_X:X_\Ga(\pi)\lra B$$
the bundle projection maps (the first two were both denoted by $\pi$; here we want to distinguish them to avoid confusion).  
Define $$\rxg=\bigcup_{\si\in\wt{S}_{E(\Ga)}}\phi(\si)\big(C_\Ga(M,\i)\big), \quad T=X_{\Ga}-\rxg\quad\subset\ccM{2}^{E(\Ga)},$$
and denote $\rxg(\pi),T(\pi)\subset\ccEft$ the bundle version of them. For all $p$, denote $X_p=\wh\pi^{-1}(B_p)\cap X_\Ga(\pi)$.\footnote{
The reason for defining $T$ instead of using the previously defined $T_1$ is the following: we did not establish any regarding smoothness in Section \ref{structureX_sec} when gluing the various copies of $\ccM{\Ga}$ together to form $\wt{X}_\Ga$; now we are working with differential forms, we want our \v{C}ech cochains to be supported away from this ``joint'' part, as shown in the next paragraph.}

By Lemma \ref{refinecover_lmm}, we can find an open cover $\wh\cU$ of $\ccEt^{E(\Ga)}$ refining $\wt\cU$, such that there exists a neighborhood $N$ of $\big(T(\pi)\cap X_{p_0}\big)\cup X_{p_0-1}$ in $\ccEt^{E(\Ga)}$, satisfying 
\begin{equation*}
U_0\cap\ldots\cap U_{|E(\Ga)|(d-1)}\cap N=\eset, \quad\forall\, U_0\neq\ldots\neq U_{|E(\Ga)|(d-1)}\in\wh\cU.
\end{equation*}
Let $\hat\mu:\wh\cU\to\wt\cU$ be a refinement map. Then $\hat\mu^*\ch{\om}'_\Ga$ is supported away from $N$. Define $\ch{\om}_\Ga=(\hat\mu^*\ch{\om}'_\Ga)|_{X_\Ga(\pi)}$. 

Let $\hat\ul$ be a partition of unity on $\ccEt^{E(\Ga)}$ subordinate to $\wh\cU$; then $\hat\mu_*\hat\ul$ is a partition of unity subordinate to $\wt\cU$. Define $\bar\om_\Ga:=h^{\hat\mu_*\hat\ul}(\ch\om'_\Ga)=h^{\hat\ul}(\hat\mu^*\ch\om'_\Ga)$. 
Then $\bar\om_\Ga|_{X_\Ga(\pi)}=h^{\hat\ul|_{X_\Ga(\pi)}}(\ch{\om}_\Ga)$. 
Since all the intersections of elements of $\wt\cU$ are contractible (since the same is true for each $\cU_e$ and $\wt\cU$ is their product), both $h^{\hat\mu_*\hat\ul}$ and $h^{\wt\ul}$ induce the isomorphism between \v{C}ech and de Rham cohomology (here we let the family of supports be the collection of compact subsets in $\ccEt^{E(\pi)}$ that do not intersect $S(\pi)$), 
so $$[\bar\om_\Ga]=[\ch\om'_\Ga]=[\om_\Ga]\in H^{*}(\ccEt^{E(\Ga)},S(\pi)).$$
Denote the restriction of $\bar\om_\Ga$ to $\ccEft$ still by by $\bar\om_\Ga$, then, pulling back the above equation by restriction, $[\bar\om_\Ga]=[\om_\Ga]\in H^{*}(\ccEft,S(\pi))$.

\subsubsection{Passing to the simplicial cohomology on \texorpdfstring{$B$}{B}}
\label{522_subsubsec}

Define $s(\si)=(d-1)\sgn(\si)+d\,\sgn'(\si)$. For a differential form $\al\in A^m(\ccEft)$, define $\pi_*^s(\al)$ to be the degree $(m-d|V(\Ga)|)$ simplicial cochain on $B$ that sends a dimension $m-d|V(\Ga)|$ simplex $\De$ to 
$$
\sum_{\si\in\wt{S}_{E(\Ga)}}(-1)^{s(\si)}\int_{\pi_V^{-1}(\De)
}(\phi(\si)\circ f)^*\al
=\sum_{\si\in\wt{S}_{E(\Ga)}}(-1)^{s(\si)}\int_{\pi_V^{-1}(\De)}f^*\phi(\si)^*\al.
$$
Then, since $\phi(\si)^*\om_\Ga=(-1)^s\om_\Ga$, 
$[\pi^s_*\om_\Ga]\in H^{p_0}(B)$ is $2^{|E(\Ga)|}|E(\Ga)|!$ times Kontsevich's class in the usual definition. 

Recall that $\pi_{\wt{X}}:\wt{X}_\Ga(\pi)\to B$ is the bundle version of Definition \ref{Xtl_dfn}, and so is $\wt{f}:\wt{X}_\Ga(\pi)\to\ccEft$. 
In Definition \ref{Xtl_dfn}, by choosing a collar neighborhood when gluing the copies of $C'_\Ga(M,\i)$, $\wt{X}_\Ga$ can be given a smooth structure so that $\wt{f}:\wt{X}_\Ga(\pi)\to\ccEft$ is piecewise smooth (smooth away from $\wt{f}^{-1}(T(\pi))$); so pulling back a differential form by $\wt{f}$ is still well defined (the result would be a piecewise-smooth form), and the usual differential form push-forward $(\pi_{\wt X})_*$ is also well defined for these forms (by integrating along each piece of the fiber and summing up). 
It follows from definition that $\pi^s_*(\al)=(\pi_{\wt X})_*\wt{f}^*\al$, 

\begin{lmm}\label{Kclasscoho_lmm}
Suppose $\al_1,\al_2\in A^*(\ccEft)$ are such that 
there exists $\wt\al\in A^{*-1}(\ccEft)$ supported away from $S(\pi)$ and $d\wt\al=\al_1-\al_2$, then $\pi^s_*\al_1-\pi^s_*\al_2$ is a coboundary. 
\end{lmm}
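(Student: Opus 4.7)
The plan is to produce an explicit primitive at the level of simplicial cochains. I will define a degree-$(p_0 - 1)$ cochain $\tilde\beta \in C^{p_0-1}(B)$ by
$$\tilde\beta(\Delta) := \sum_{\sigma \in \tS_{E(\Ga)}} (-1)^{s(\sigma)} \int_{\pi_V^{-1}(\Delta)} f^*\phi(\sigma)^*\tl\alpha$$
and show $d\tilde\beta = \pi^s_*\alpha_1 - \pi^s_*\alpha_2$. Since $\pi_V^{-1}(\Delta)$ is a compact smooth manifold with corners, Stokes' theorem applied to $f^*\phi(\sigma)^*\tl\alpha$, together with $d\tl\alpha = \alpha_1 - \alpha_2$, gives for each $\sigma$
$$\int_{\pi_V^{-1}(\Delta)} f^*\phi(\sigma)^*(\alpha_1 - \alpha_2) = \int_{\pi_V^{-1}(\partial\Delta)} f^*\phi(\sigma)^*\tl\alpha \pm \sum_{|A|\geq 2} \int_{\ov\cS_A(\pi) \cap \pi_V^{-1}(\Delta)} f^*\phi(\sigma)^*\tl\alpha.$$
Summing with the weights $(-1)^{s(\sigma)}$, the first right-hand term reassembles as $\tilde\beta(\partial\Delta) = (d\tilde\beta)(\Delta)$, while the left-hand side is $(\pi^s_*\alpha_1 - \pi^s_*\alpha_2)(\Delta)$. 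The lemma thus reduces to the vanishing of the fiber-boundary contribution
$$\sum_{A,\sigma} (-1)^{s(\sigma)} \int_{\ov\cS_A(\pi) \cap \pi_V^{-1}(\Delta)} f^*\phi(\sigma)^*\tl\alpha = 0.$$

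I will verify this by the familiar four-case analysis, mirroring Kontsevich's original argument. For $A$ of type~1, the restriction $f_\Ga|_{\ov\cS_A}$ factors through the lower-dimensional $\ov\cS_A^\Ga$ by the corollary to Lemma \ref{regularvalueunique_lmm}, so the pullback of any top-degree form on $\ov\cS_A$ is identically zero. For $A$ of type~3, $\phi(\sigma)(\ov\cS_A^\Ga(\pi)) \subset S(\pi)$ by the definition of $S$, and the support condition on $\tl\alpha$ forces $f^*\phi(\sigma)^*\tl\alpha|_{\ov\cS_A(\pi)} \equiv 0$. For $A$ of type~2, I will pair $(\sigma, A)$ with $(\sigma\si_A^{-1}, A)$ and exploit the fiberwise involution $\phi'_A$ from the proof of Lemma \ref{cancel_lmm} which satisfies $\phi(\si_A)\circ f_\Ga = f_\Ga \circ \phi'_A$; the orientation-reversal sign of $\phi'_A$ (a midpoint reflection, contributing $(-1)^d$) combined with the sign difference $(-1)^{s(\sigma) - s(\sigma\si_A^{-1})}$ yields an overall factor of $-1$, producing exact cancellation between paired terms. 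For $A$ of type~4, the $\Ga$-pairing of Section \ref{fixagraph_sec} yields $A_1 \leftrightarrow A_2$ together with an isomorphism $\al_{A_1 A_2}$; pairing $(\sigma, A_1)$ with $(\sigma\si_{e_1 e_2}^{-1}, A_2)$ and applying the fiberwise map $\phi'_{A_1 A_2}$ gives the analogous cancellation, in which the required sign identity is precisely the graph-homology closedness hypothesis $[\de_{\bar d}(\Ga)]_{\sim_{\bar d}} = 0$.

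The main obstacle I expect is the sign bookkeeping in the type~2 and type~4 cancellations: one must confirm that $(-1)^{s(\sigma) - s(\sigma\si_A)}$ (resp.~$(-1)^{s(\sigma) - s(\sigma\si_{e_1 e_2})}$), combined with the orientation sign of the fiberwise involution on $\ov\cS_A$, is always $-1$. This is, however, precisely what the orientation twist $(-1)^{(d-1)\sgn(\sigma) + d\sgn'(\sigma)}$ in Definition \ref{Xtl_dfn} was designed to encode, so the cancellations should match term by term once the conventions are unwound. Conceptually, the whole argument is Stokes' theorem for the oriented boundaryless topological manifold $\tilde X_\Ga(\pi)$ via the identity $\pi^s_* = (\pi_{\tilde X})_* \tilde f^*$, with the type~2 and type~4 cancellations built into the gluing relation $\sim_\Ga$; the cochain-level approach above simply avoids having to justify Stokes on the non-compact $\tilde X_\Ga$, reducing the analysis to the compact domains $\pi_V^{-1}(\Delta)$.
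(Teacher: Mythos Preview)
Your proposal is correct and follows essentially the same approach as the paper: apply Stokes' theorem on $\pi_V^{-1}(\Delta)$, split $\partial\pi_V^{-1}(\Delta)$ into the horizontal piece $\pi_V^{-1}(\partial\Delta)$ and the fiberwise boundary strata $\cS_A$, and dispose of the latter by the four-type analysis (type~1 by dimension, type~3 by the support hypothesis on $\wt\al$, types~2 and~4 by cancellation over $\si$). Your primitive $\tilde\beta$ is exactly the paper's $\pi^s_*\wt\al$, and the paper's argument is a terser version of yours---in particular it simply asserts the type~2/4 cancellation ``when summed over $\si$'' without spelling out the pairing $(\si,A)\leftrightarrow(\si\si_A^{-1},A)$ or the sign check, which you correctly identify as the content encoded in the orientation twist of Definition~\ref{Xtl_dfn} (equivalently, in the orientation-reversal statement of Lemma~\ref{XGa_lmm}).
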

\begin{proof}
For a simplex $\De$ in $B$, 
\begin{align*}
\sum_{\si\in\wt{S}_{E(\Ga)}}(-1)^{s(\si)}\int_{\pi_V^{-1}(\De)}(\phi(\si)\circ f)^*(\al_1-\al_2)=\sum_{\si\in\wt{S}_{E(\Ga)}}(-1)^{s(\si)}\int_{\prt\pi_V^{-1}(\De)}(\phi(\si)\circ f)^*\wt\al\\
=\sum_{\si\in\wt{S}_{E(\Ga)}}(-1)^{s(\si)}\int_{\pi_V^{-1}(\prt\De)}(\phi(\si)\circ f)^*\wt\al+
\sum_{\si\in\wt{S}_{E(\Ga)}}(-1)^{s(\si)}\sum_{A}\int_{\pi_V^{-1}(\De)\cap\cS_A}(\phi(\si)\circ f)^*\wt\al,
\end{align*}
where $A\subset V(\Ga)\cup\{\i\}$, $|A|\ge2$. The second term vanishes: for $A$ of type 1, $f(\cS_A)$ has smaller dimension; for $A$ of type 2 or 4, they cancel with each other when summed over $\si$; for $A$ of type 3, because $\wt\al$ vanishes on $S(\pi)$ by assumption. So, $$(\pi_*^s\al_1-\pi_*^s\al_2)(\De)=(\pi_*^s\wt\al)(\prt\De)=(\de\,\pi_*^s\wt\al)(\De).$$
\end{proof}

Since $[\bar\om_\Ga]=[\om_\Ga]\in H^{*}(\ccEft,S(\pi))$ (last sentence before Section \ref{522_subsubsec}) and $[\pi^s_*\om_\Ga]\in H^{p_0}(B)$ is $2^{|E(\Ga)|}|E(\Ga)|!$ times Kontsevich's class in the usual definition (last sentence in the first paragraph of Section \ref{522_subsubsec}), 
by Lemma \ref{Kclasscoho_lmm}, $\pi_*^s(\bar\om_\Ga)$ also represents $2^{|E(\Ga)|}|E(\Ga)|!$ times Kontsevich's class in the usual definition. 

We apply Remark \ref{explicitpushforward_rmk} to the situation here. For each $p_0$-simplex $\De^{p_0}$ in $B$, $\ch\om_\Ga|_{\pi_X^{-1}(\De^{p_0})}$ is supported away from $(T(\pi)\cap\pi_X^{-1}(\De^{p_0}))\cup \pi_X^{-1}(\prt\De^{p_0})$. 
So $\ch\om_\Ga|_{\pi_X^{-1}(\De^{p_0})}$ is a cocycle in 
$$\ch{C}^{|E(\Ga)|(d-1)}_{\wh\cU|_{\pi_X^{-1}(\De^{p_0})}}\big(X_\Ga(\pi)|_{\De^{p_0}}, X_\Ga(\pi)|_{\prt\De^{p_0}}\cup T(\pi)|_{\De^{p_0}}\big).$$ 
By K\"{u}nneth formula, 
$$H^{|E(\Ga)|(d-1)}\big(X_\Ga(\pi)|_{\De^{p_0}},X_\Ga(\pi)|_{\prt\De^{p_0}}\!\cup T(\pi)|_{\De^{p_0}}\big)\approx H^{p_0}(\De^{p_0},\prt\De^{p_0})\otimes H^{d|V(\Ga)|}(X_{\Ga,b},T_{b}),$$
where $b\in\mathring\De^{p_0}$ is an arbitrary point and $(X_{\Ga,b},T_{b})$ is the fiber of $(X_\Ga(\pi),T(\pi))$ over $b$. Let $\chi'(\De^{p_0})\in H^{d|V(\Ga)|}(X_{\Ga,b},T_{b})$ be such that $[\ch\om_\Ga|_{\pi_X^{-1}(\De^{p_0})}]\approx1\otimes\chi'(\De^{p_0})$ under the K\"{u}nneth isomorphism, where $1\in\R\approx H^{p_0}(\De^{p_0},\prt\De^{p_0})$ (notice the identification uses the orientation of $B$). 
Then $\{\De^{p_0}\to\chi'(\De^{p_0})\}_{\De^{p_0}}$ is a simplicial cocycle on $B$ with coefficients in the local system $H^{d|V(\Ga)|}(X_\Ga,T)$. Its restriction to $H^{d|V(\Ga)|}(X_\Ga,T_1)$ (note $T_1\subset T$) represents $(\pi_X)_*\Om_\Ga(\pi)$. Let $\chi(\De^{p_0})=\wt{f}^*\chi'(\De^{p_0})\in H^{d|V(\Ga)|}_c(\wt{X}_{\Ga,b})\approx\R$ (the last identification uses the orientation on $\wt{X}_\Ga$), 
then $\{\De^{p_0}\to\chi(\De^{p_0})\}_{\De^{p_0}}$ is a representative of $K_{\Ga,\pi,F}$ by the definition of $K_{\Ga,\pi,F}$ in Section \ref{Kdfn_subsubsec}. 

Note the bundle map 
\[
\begin{tikzcd}
\wt{X}_\Ga(\pi)\rar["\wt{f}"]\drar["\pi_{\wt X}"']&X_\Ga(\pi)-T_1(\pi)\rar[hook]\dar["\pi_X"]&X_\Ga(\pi)\dlar["\pi_X"]\\
&B&
\end{tikzcd}.
\]
Since $\ch\om_\Ga=0$ in a neighborhood of $T(\pi)\cap X_{p_0}$, $\wt{f}^*\ch\om_\Ga|_{\pi_{\wt X}^{-1}(B_{p_0})}$ is compactly supported. 
For each $p_0$-simplex $\De^{p_0}$ of $B$, by the naturality of the K\"{u}nneth formula, $[\wt{f}^*\ch\om_\Ga|_{\pi_{\wt X}^{-1}(\De^{p_0})}]\approx1\otimes\wt{f}^*\chi'(\De^{p_0})=1\otimes\chi(\De^{p_0})$ under the K\"unneth isomorphism
$$H^{|E(\Ga)|(d-1)}_c\big(\wt X_\Ga(\pi)|_{\De^{p_0}},\wt X_\Ga(\pi)|_{\prt\De^{p_0}}\big)\approx H^{p_0}(\De^{p_0},\prt\De^{p_0})\otimes H^{d|V(\Ga)|}_c(\wt{X}_{\Ga,b}).$$

Since $\bar\om_\Ga=h^{\hat\ul}(\hat\mu^*\ch{\om}'_\Ga)$ and $\hat\mu^*\ch{\om}'_\Ga$ is supported away form $N$, $\bar\om_\Ga$ is also supported away from $N$. 
So $[\bar\om_\Ga]=[\ch\om'_\Ga]\in H^{*}(\ccEt^{E(\Ga)},N)$, 
and $\wt{f}^{*}\bar\om_\Ga$ is a smooth form.
For every simplex $\De^{p_0}$ in $B$, pulling back the cohomology equality to $\wt{X}_\Ga(\pi)$ and restricting to $\De^{p_0}$ in $B$, 
\begin{align*}
[\wt{f}^*\bar\om_\Ga|_{\pi_{\wt X}^{-1}(\De^{p_0})}]=[\wt{f}^*\ch\om_\Ga|_{\pi_{\wt X}^{-1}(\De^{p_0})}]=
1\otimes\chi(\De^{p_0})\in H^{|E(\Ga)|(d-1)}_c\big(\wt X_\Ga(\pi)|_{\De^{p_0}},\wt X_\Ga(\pi)|_{\prt\De^{p_0}}\big)\\
\approx H^{p_0}(\De^{p_0},\prt\De^{p_0})\otimes H^{d|V(\Ga)|}_c(\wt{X}_{\Ga,b})\approx \R\otimes\R.
\end{align*}
Therefore, $\int_{\pi_{\wt X}^{-1}(\De^{p_0})}\wt{f}^*\bar\om_\Ga=\chi(\De^{p_0})$. 
This completes the proof of Proposition \ref{compare_prp}. 

\section{Some remarks about the condition in Theorem \texorpdfstring{\ref{main_thm}}{mainthm}}\label{rmk_sec}

Theorem \ref{main_thm} can potentially be formulated in a different way. 

For open subsets $U,V\subset\R^d$ and a continuous map $f:U\to V$, say $f$ is {\it almost differentiable} if the map $(f,f):U\sx U\to V\sx V$ lifts to a continuous map\footnote{Although the notation $\tl{f}$ has a completely different meaning in Sections 3-5, Section \ref{rmk_sec} is relatively independent from the others, so this abuse of notation should not cause confusion.} 
$\tl{f}:Bl_\De(U\sx U)\to Bl_\De(V\sx V),$ where $\De$ denotes the diagonal in $U\sx U$ and $V\sx V$, respectively, and $Bl_\De$ denotes real oriented blow-up along $\De$. 
We can define an {\it almost differentiable manifold} to be a topological manifold together with a maximal collection of charts where the transition maps are almost differentiable homeomorphisms whose inverses are also almost differentiable. 
So, if $M$ is an almost differentiable manifold, then $Bl_\De(M\sx M)$ is well-defined. The corresponding automorphism group $\tn{Aut}^{ad}(M)$ in this category consists of homeomorphisms $f:M\to M$ such that $(f,f):M\sx M\to M\sx M$ lifts to a homeomorphism $Bl_\De(M\sx M)\to Bl_\De(M\sx M)$. Denote by $\pi:Bl_\De(M\sx M)\to M\sx M$ the blow down map. 
\begin{rmk}
(This is pointed out by the anonymous referee.)
The notion of almost differentiability can be rephrased as follows: 
for open subsets $U,V\subset\R^d$, a continuous map $f:U\to V$ is {\it almost differentiable} if
\begin{itemize}
\vspace{-.6cm}
\itemsep0em 
\item for each $a\in U$ and $v\in S^{d-1}$, the limit
$$SDf_a(v)=\lim_{t\to0,t>0}\frac{f(a+tv)-f(a)}{|f(a+tv)-f(a)|}\in ST_{f(a)}V$$
exists, and
\item $SDf: STU\to STV$, $SDf(a,v):=SDf_a(v)$ is continuous. 
\end{itemize}
\vspace{-.6cm} 
\end{rmk}

Given two real vector spaces $T_1,T_2$ and a linear isomorphism $f:T_1\to T_2$, since $f(\la v)=\la f(v)$ for all $v\in T_1-0$, $\la\in\R-0$, $f$ induces a homeomorphism $ST_1\to ST_2$, where $ST_i=(T_i-0)/{\tn{scaling}}$ denotes the unit sphere in $T_i$.

Suppose $M$ is a $d$-dimensional almost differential manifold. Define a {\it framing} $F$ on $M$ to be a continuous map $F:\prt Bl_\De(M\sx M)\to S^{d-1}$ such that for every $x\in M$, $F|_{\pi^{-1}(x,x)}$ satisfies:
if $\phi:\R^d\supset U\xra{\sim} N\subset M$, $\phi(0)=x$ is a chart of $M$ near $x$, then $F|_{\pi^{-1}(x,x)}:=ST_xU\to S^{d-1}$ is a homeomorphism induced from a linear map $T_xU\to\R^d$. By Proposition \ref{frame_prp} below, this condition doesn't depend on the choice of such a chart $\phi$. 

Suppose $M$ is an almost differentiable manifold and $\i\in M$ a fixed point. Then the group $\cG$ defined in Section \ref{thm_sec} can be similarly defined here: 
$$\cG:=\tn{Aut}^{ad}_\infty(M):=\big\{g\in\tn{Aut}^{ad}(M)\big|\ \exists \tn{ neighborhood } N \ni\i \tn{ such that } g|_N=\id\big\}.$$

Define an {\it almost differentiable $(M,\infty)$-bundle} to be a fiber bundle with fiber $M$ and structure group $\tn{Aut}^{ad}_\infty(M)$. 
Such a bundle $\pi:E\to B$ has an associated vertical sphere tangent bundle $\pi': ST^vE\to E$. 
Define a {\it framing} on $\pi:E\to B$ to be a topological trivialization 
$$
\begin{tikzcd}
F:ST^vE|_{E-s_\infty}\ar[rr,"\tn{homeomorphism}"] \ar[rd,"\pi'"']
& & S^{d-1}\times (E-s_\infty) \ar[ld,"\tn{projection}"]\\
& E-s_\infty &
\end{tikzcd}
$$
of $\pi'|_{E-s_{\infty}}$, standard near $s_{\infty}$, and such that for each point $p\in E-s_\infty$, 
$F|_p:ST^v_pE\to S^{d-1}$
is induced from a linear map (with respect to any chart of $E|_p$ near $p$). 
When $M$ is smooth, 
this definition of framing is equivalent to a topological trivialization 
$$T^v(E-\infty)\xra{\approx} \R^d\times(E-s_\infty)$$
that is a linear isomorphism over each point in $E-s_\infty$ but varies only continuously instead of smoothly as the point in $E-s_\infty$ varies. 

The arguments in Sections \ref{space_sec} and \ref{propagator_sec} did not actually use the assumption that $\pi:E\to B$ is a smooth fiber bundle, and everything goes through if $\pi$ is only assumed to be a framed almost differentiable $(M,\infty)$-bundle, where $M$ is a smooth homology sphere.\footnote{
That $M$ is smooth instead of just almost differentiable is needed in the proof, to have the structure of the Fulton-MacPherson compactification of configuration spaces.
However, this is mainly used in Section \ref{structureX_sec}; specifically, to get Corollary \ref{dimT12_crl}. The argument there can probably go through by considering everything locally---trying to show that for every point $p\in T_2$ (resp. $T_1$), there is a neighborhood $U$ of $p$ such that $\dim_t(T_2\cap U)$ (resp. $\dim_t(T_1\cap U)$) is bounded as desired.
If this works, the statement of the Theorem \ref{restatement_thm} can potentially be generalized to the case when $M$ is only almost differentiable.} 
Therefore, we have\footnote{I would like to thank the anonymous referee for pointing this out.} 
\begin{thm}[restatement of Theorem \ref{main_thm}]
\label{restatement_thm}
Suppose $M$ is a smooth homology sphere and $\infty\in M$ is a point, then
Kontsevich's characteristic classes can be defined for framed almost differentiable $(M,\infty)$-bundles; when the bundle is smooth, the definition agrees with the original one. 
\end{thm}


\subsection{Auxiliary observations on almost differentiability}

The almost differentiability condition is actually quite strong. I do not know at the time of writing if an almost differentiable manifold (respectively, an almost differentiable bundle) necessarily has a unique $C^1$ structure. We close this section with three auxiliary observations. Example \ref{eg} below shows that almost differentiability does not imply $C^1$. Proposition \ref{conf_prp} below shows that almost differentiability implies quasi-conformal. Proposition \ref{frame_prp} below shows that an almost differentiable map induces a linear map between tangent bundles, modulo scaling by a positive smooth function. 
\begin{eg}\label{eg}
Let $B^n_{\ep}$ be the standard ball of radius $\ep<1/(2e)$ in $\R^n$. Define 
$$f:B^n_\ep\to\R^n,\quad f(x)=-2\log(|x|)\cdot x,$$
then $f$ maps $B^n_\ep$ homeomorphically onto its image, and $f$ is almost differentiable, but not continuously differentiable at $0$. See \cite{MOanswer} for a detailed proof -- the point being that the function $-2\log|x|$ approaches $\i$ slow enough as $x\to0$. 
\end{eg}

The following definition of quasi-conformal of a map is copied from \cite{quasiconformal}. A homeomorphism $f:U\to\R^d$ from an open subset $U$ to its image is {\it $k$ quasiconformal} if for all $x\in U$
$$H_f(x)=\lim_{r\to0}\sup\frac{\max\{|f(y)-f(x)|\ |\ |y-x|=r\}}{\min\{|f(y)-f(x)|\ |\ |y-x|=r\}}\le k.$$
$f$ is {\it quasiconformal} if it is $k$ quasiconformal for some $k\ge1$. 

\begin{prp}\label{conf_prp}
Let $U\subset\R^d$ be open and $f:U\to\R^d$ be an almost differentiable homeomorphism to its image. Then for every compact subset $K\subset U$, $f$ is quasiconformal on $K$. 
\end{prp}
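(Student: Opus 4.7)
I argue by contradiction. If $H_f$ is unbounded on $K$, there exist $x_n\in K$, $r_n\to 0^+$, and $v_n,w_n\in S^{d-1}$ with
$$M_n:=|f(x_n+r_nv_n)-f(x_n)|,\quad m_n:=|f(x_n+r_nw_n)-f(x_n)|,\quad M_n/m_n\to\infty.$$
After passing to subsequences, I arrange $x_n\to x_0\in K$, $v_n\to v$, $w_n\to w$, and $(v_n-w_n)/|v_n-w_n|\to u$ in $S^{d-1}$ (taking a further subsequence if necessary; this handles $v=w$ and $v\neq w$ uniformly).

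The hypothesis of almost smoothness provides the direction function
$$F:U\times S^{d-1}\lra S^{d-1},\qquad F(y,\xi):=\lim_{r\to 0^+}\frac{f(y+r\xi)-f(y)}{|f(y+r\xi)-f(y)|},$$
which is well-defined and continuous on $U\times S^{d-1}$; by continuity of the lift $\tl{f}$ on the compact preimage of $K'\times K'$ in $Bl_\De(U\times U)$ (for any compact neighborhood $K'\subset U$ of $K$), this convergence is uniform in the sense that $(f(z)-f(y))/|f(z)-f(y)|$ is close to $F(y,(z-y)/|z-y|)$ whenever $y,z\in K'$ and $|z-y|$ is small. Moreover, each normalized map $\xi\mapsto(f(x_n+r_n\xi)-f(x_n))/|f(x_n+r_n\xi)-f(x_n)|\colon S^{d-1}\to S^{d-1}$ has topological degree $\pm 1$ because $f$ is a homeomorphism; by uniform convergence $F(x_0,\cdot)$ also has degree $\pm 1$, hence is surjective and its generic fibre has algebraic count $\pm 1$.

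Applying the uniform statement to the pair $(x_n+r_nw_n,\,x_n+r_nv_n)$, the direction of $f(x_n+r_nv_n)-f(x_n+r_nw_n)$ tends to $F(x_0,u)$. On the other hand, using the decomposition
$$f(x_n+r_nv_n)-f(x_n+r_nw_n)=M_n\alpha_n-m_n\beta_n,\qquad\alpha_n\to F(x_0,v),\ \beta_n\to F(x_0,w),$$
together with $m_n/M_n\to 0$, the same direction tends to $F(x_0,v)$. Hence $F(x_0,u)=F(x_0,v)$.

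To upgrade this single equality into a contradiction, I would vary the near-minimizing direction. For each $n$, the open set $N_n:=\{w\in S^{d-1}\,:\,|f(x_n+r_nw)-f(x_n)|<\sqrt{M_nm_n}\}$ is a non-empty open neighborhood of $w_n$ on which $M_n/|f(x_n+r_nw)-f(x_n)|\to\infty$, so repeating the triangle computation above with $w_n$ replaced by any $w\in N_n$ yields $F(x_0,(v-w^*)/|v-w^*|)=F(x_0,v)$ for every accumulation point $w^*$ of a sequence $w_n\in N_n$. Since $w\mapsto(v-w)/|v-w|$ is a local diffeomorphism of $S^{d-1}\setminus\{v\}$, once the topological upper limit of $\{N_n\}$ is shown to have non-empty interior in $S^{d-1}$, it would follow that $F(x_0,\cdot)$ is constant on an open subset of $S^{d-1}$, contradicting its degree being $\pm 1$. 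The main obstacle is precisely this last step: ensuring the upper limit of $\{N_n\}$ does not degenerate to a lower-dimensional subset. I expect this to follow from a compactness argument applied to the continuous level-set structure of $w\mapsto|f(x_n+r_nw)-f(x_n)|/M_n$ on $S^{d-1}$ together with the hypothesis $m_n/M_n\to 0$, but this step will require the most care.
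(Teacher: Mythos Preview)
Your argument has a genuine gap, and it is not merely the one you flag at the end. The preliminary equality $F(x_0,u)=F(x_0,v)$ can be vacuous: nothing prevents $u=v$. For instance, if $v_n\to e_1$ and $w_n\to -e_1$ then $(v_n-w_n)/|v_n-w_n|\to e_1=v$. So already at the single-equality stage you have not extracted any information, and the subsequent program of varying $w$ inside $N_n$ inherits the same defect. The difficulty you identify---that the upper limit of $\{N_n\}$ might collapse to a lower-dimensional set---is also real; I do not see how compactness alone rules this out, since the level sets of $w\mapsto |f(x_n+r_nw)-f(x_n)|/M_n$ can concentrate arbitrarily as $n\to\infty$.

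The paper's proof sidesteps both problems by two moves. First, instead of working with $v_n,w_n$ directly, it inserts the midpoint $z_k$ of the spherical geodesic between the max and min points $y^b_k,y^s_k$; one of the two ratios $|f(y^b_k)-f(x_k)|/|f(z_k)-f(x_k)|$ or $|f(z_k)-f(x_k)|/|f(y^s_k)-f(x_k)|$ still blows up, and the midpoint guarantees that the two relevant domain directions $(z^b_k-x_k)/|z^b_k-x_k|$ and $(z^b_k-z^s_k)/|z^b_k-z^s_k|$ make an angle of at least $\pi/4$. Second, and more importantly, the paper never invokes a degree argument on $F(x_0,\cdot)$; instead it uses that $f^{-1}$ is also almost smooth. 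The triangle inequality in the image gives that the two image directions coincide in the limit; applying $(f^{-1})'_{f(x)}$ pulls this back to equality of the two domain directions, contradicting the $\pi/4$ bound. This use of the inverse map is the key idea you are missing---it converts a single equality of directions into an immediate contradiction, with no need to produce an open set on which $F(x_0,\cdot)$ is constant.
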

\begin{proof}
First notice that $f$ being almost differentiable implies: for every point $x\in U$ there is a map $f'_x:S^{d-1}\to S^{d-1}$ such that for a sequence of pairs of points $\{(x_n,y_n)\in U\sx U-\Delta\}_{n=1}^\i$, 
$$\lim_{n\to\i}(x_n,y_n)=(x,x),\ \lim_{n\to\i}\frac{y_n-x_n}{|y_n-x_n|}=v\implies\lim_{n\to\i}\frac{f(y_n)-f(x_n)}{|f(y_n)-f(x_n)|}=f'_x(v).$$

Suppose $f$ is not quasiconformal on some $K$. Then for every $k>0$ there exists $x_k\in K$ such that, for all $\ep>0$, there exist $y^b_k,y^s_k\in U$ (the superscripts stand for ``big'' and ``small'') satisfying
$$|y^b_k-x_k|=|y^s_k-x_k|<\ep,\quad\frac{|f(y^b_k)-f(x_k)|}{|f(y^s_k)-f(x_k)|}>k.$$
Let $k$ range over $\Z^{>0}$. Since $K$ is compact, by possibly passing to a subsequence, we can assume $x_k\to x$ as $k\to\i$ for some $x\in K$. 
Plugging in $\ep=1/k$ above, for every $k$ we get a tuple of points $x_k,y^b_k,y^s_k$, all limit to $x$ as $k\to\i$. For each $k$, denote by $S_k$ the sphere centered at $x_k$ on which $y^s_k,y^b_k$ lie. 
Define $z_k\in S_k$ to be the midpoint of the shortest geodesic (if not unique, take an arbitrary one) between $y^b_k,y^s_k$ on $S_k$. 
This implies that the angle between the vectors $y^b_k-x_k$ and $y^b_k-z_k$ is at least $\pi/4$; same for the vectors $z_k-y^s_k$ and $z_k-x_k$.
\begin{center}
\begin{tikzpicture}
\coordinate (xk) at (0,0);
\draw (xk) circle (1);
\draw [fill] (xk) circle (0.03);
\node at (-0.3,0) {$x_k$};
\draw [fill] (0,1) coordinate (ybk)
circle (0.03); 
\node at (0,1.4) {$y^b_k$}; 
\coordinate (ysk) at ($(xk)+(-50:1)$);
\draw [fill] (ysk) circle (0.03); 
\node [xshift=0.3cm] at (ysk) {$y^s_k$}; 
\coordinate (zk) at ($(xk)+(20:1)$); 
\draw [fill] (zk) circle (0.03); 
\node [xshift=0.3cm] at (zk) {$z_k$}; 

\draw [gray] (xk) -- (zk)
(zk) -- (ybk)
(ybk) -- (xk)
(xk) -- (ysk)
(ysk) -- (zk); 

\node at (3,0) {$\xrightarrow{\ \ \ f\ \ \ }$};

\coordinate (fxk) at (6,0); 
\draw (fxk) ellipse (0.5 and 1.5);
\draw [fill] (fxk) circle (0.03);
\node [xshift=-0.5cm] at (fxk) {$f(x_k)$};
\coordinate (fybk) at ($(fxk)+(0,1.5)$); 
\draw [fill] (fybk) circle (0.03); 
\node [yshift=0.3cm] at (fybk) {$f(y^b_k)$};
\coordinate (fysk) at ($(fxk)+(0.5,0)$); 
\draw [fill] (fysk) circle (0.03); 
\node [xshift=0.4cm] at (fysk) {$f(y^b_k)$};
\coordinate (fzk) at ($(fxk)+(45:0.5 and 1.5)$);
\draw [fill] (fzk) circle (0.03); 
\node [xshift=0.4cm] at (fzk) {$f(z_k)$};

\draw [gray] (fxk) -- (fzk)
(fzk) -- (fybk)
(fybk) -- (fxk)
(fxk) -- (fysk)
(fysk) -- (fzk); 
\end{tikzpicture}
\end{center}
Since 
$$\frac{|f(y^b_k)-f(x_k)|}{|f(z_k)-f(x_k)|}\cdot\frac{|f(z_k)-f(x_k)|}{|f(y^s_k)-f(x_k)|}>k,$$
one of the factors must be bigger than $\sqrt k$. In the case it is the first factor, define $z^b_k=y^b_k$, $z^s_k=z_k$; in the case it is the second factor, define $z^b_k=z_k$, $z^s_k=y^s_k$. Now we have a sequence of tuples $(x_k,z^b_k,z^s_k)$, all converge to $x$ as $k\to\i$, and 
$$\lim_{k\to\i}\frac{f(z^b_k)-f(x_k)}{|f(z^b_k)-f(x_k)|}=\lim_{k\to\i}\frac{f(z^b_k)-f(z^s_k)}{|f(z^b_k)-f(z^s_k)|},$$
because in the triangle $(f(z^b_k),f(z^s_k),f(x_k))$, the ratio between the lengths of the edges $f(z^s_k)f(x_k)$ and $f(z^b_k)f(x_k)$ goes to 0, implying that the angle between the edges $f(z^b_k)f(z^s_k)$ and $f(z^b_k)f(x_k)$ goes to 0. Now, since $f$ is almost differentiable and thus so is $f^{-1}$, 
\begin{align*}
\lim_{k\to\i}\frac{z^b_k-x_k}{|z^b_k-x_k|}&=(f^{-1})'_{f(x)}\Big(\lim_{k\to\i}\frac{f(z^b_k)-f(x_k)}{|f(z^b_k)-f(x_k)|}\Big)\\
&=(f^{-1})'_{f(x)}\Big(\lim_{k\to\i}\frac{f(z^b_k)-f(z^s_k)}{|f(z^b_k)-f(z^s_k)|}\Big)=\lim_{k\to\i}\frac{z^b_k-z^s_k}{|z^b_k-z^s_k|},
\end{align*}
which contradicts that the angle between the two vectors is at least $\pi/4$. 
\end{proof}
The converse to Proposition \ref{conf_prp} is not true. For example, $f:\R^2\to\R^2,f(x)=|x|^{-1/2}\cdot x$ is quasiconformal but not almost differentiable. 

\begin{prp}\label{frame_prp}
Let $U_1,U_2$ be open subsets of $\R^d$ and $f:U_1\to U_2$ be an almost differentiable homeomorphism. For every point $x\in U$, denote $\tl f_x:ST_xU_1\to ST_{f(x)}U_2$ the homeomorphism given by restricting $\tl f$ to $\pi^{-1}(x,x)$, where $ST$ denotes the unit sphere in the tangent space. Then
$\tl f_x$ is induced from a linear isomorphism $T_xU_1\to T_xU_2$. 
\end{prp}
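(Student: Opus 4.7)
The plan is to show that $\tilde{f}_x$ takes great circles of $ST_xU_1 \cong S^{d-1}$ to great circles of $ST_{f(x)}U_2 \cong S^{d-1}$, and then invoke the (real, continuous) Fundamental Theorem of Projective Geometry. I will focus on the case $d \ge 3$; the cases $d \le 2$ reduce to this by embedding $U_i \subset \R^d$ into $U_i \times \R^{3-d}$ and extending $f$ by the identity on the new coordinates, which is easily seen to preserve almost smoothness and to leave $\tilde{f}_x$ unchanged up to a canonical product factor.

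Identifying the tangent spaces with $\R^d$, regard $\tilde{f}_x$ as a continuous map $S^{d-1} \to S^{d-1}$; assuming $f^{-1}$ is also almost smooth (equivalently, that the lift $\tilde f$ is a homeomorphism), $\tilde{f}_x$ is itself a homeomorphism. Fix linearly independent unit vectors $v_1, v_2 \in S^{d-1}$ and set $V_i := \tilde{f}_x(v_i)$. The core step is to show $\tilde{f}_x(u) \in \mathrm{span}(V_1, V_2) \cap S^{d-1}$ for $u = (v_2 - v_1)/|v_2 - v_1|$. I would take sequences $y_n = x + r_n v_1$ and $z_n = x + r_n v_2$ with $r_n \to 0^+$, so that $y_n, z_n$ lie on a common sphere about $x$. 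By Proposition \ref{conf_prp} (quasiconformality of $f$), the ratio $|f(y_n) - f(x)|/|f(z_n) - f(x)|$ lies in a fixed interval $[1/k, k]$; passing to a subsequence it converges to some $\lambda > 0$. Writing $a_n = f(y_n) - f(x)$, $b_n = f(z_n) - f(x)$, almost smoothness gives $a_n/|a_n| \to V_1$, $b_n/|b_n| \to V_2$, and $(b_n - a_n)/|b_n - a_n| \to \tilde{f}_x(u)$, and expanding
\[
\frac{b_n - a_n}{|b_n|} \;=\; \frac{b_n}{|b_n|} - \frac{|a_n|}{|b_n|}\cdot\frac{a_n}{|a_n|} \;\longrightarrow\; V_2 - \lambda V_1
\]
and normalizing yields $\tilde{f}_x(u) = (V_2 - \lambda V_1)/|V_2 - \lambda V_1|$, which lies in $\mathrm{span}(V_1, V_2) \cap S^{d-1}$; single-valuedness of $\tilde{f}_x$ forces $\lambda$ to be independent of the chosen subsequence. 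Iterating this midpoint-type construction inside the great circle $C := \mathrm{span}(v_1, v_2) \cap S^{d-1}$, by replacing $(v_1, v_2)$ with pairs of points on $C$ whose $\tilde{f}_x$-images are already known to lie in $\mathrm{span}(V_1, V_2)$, one produces a dense subset of $C$ whose images lie in that plane; continuity of $\tilde{f}_x$ then gives $\tilde{f}_x(C) \subset \mathrm{span}(V_1, V_2) \cap S^{d-1}$, and injectivity together with connectedness upgrades this to a homeomorphism of great circles.

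Finally, for $d \ge 3$, antipodal preservation $\tilde{f}_x(-v) = -\tilde{f}_x(v)$ follows by noting that $\tilde{f}_x(-v)$ lies on every great circle through $\tilde{f}_x(v)$, whose common intersection (taken over sufficiently many second points) is $\{\pm \tilde{f}_x(v)\}$, and injectivity selects the minus sign. Thus $\tilde{f}_x$ descends to a continuous bijection $\R\P^{d-1} \to \R\P^{d-1}$ sending projective lines to projective lines, and the real Fundamental Theorem of Projective Geometry produces a linear automorphism $L: \R^d \to \R^d$ with $\tilde{f}_x(v) = Lv/|Lv|$, as required. The main obstacle is executing the dense-iteration step rigorously: the equal-radius base case is clean precisely because quasiconformality bounds the relevant norm ratios, but propagating the conclusion to every point of $C$ requires either careful bookkeeping of the iteration or direct control of norm ratios for unequal-radius sequences, which can be obtained from the Hölder bounds implicit in quasiconformality (Mori's theorem).
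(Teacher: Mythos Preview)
Your approach via the Fundamental Theorem of Projective Geometry is genuinely different from the paper's and is essentially correct for $d\ge 3$. The paper instead observes that $(\tilde f_x)^{E(\Ga)}$ must preserve the configuration image $\ov V_\Ga\subset (S^{d-1})^{E(\Ga)}$ for every graph $\Ga$ (by a closure argument as in Lemma~\ref{1_lmm}), and then proves a separate combinatorial lemma (Lemma~\ref{lattice_lmm}, using complete graphs on lattice points) that any sphere homeomorphism preserving all $\ov V_\Ga$ is projective-linear. Your three-point computation is morally the paper's argument specialized to a triangle, and your appeal to FTPG replaces the lattice lemma; this is more elementary and works cleanly once $d\ge 3$. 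Two remarks on execution. First, quasiconformality is not actually needed for the ``in span'' conclusion: whatever subsequential limit $\lambda\in[0,\infty]$ the ratio $|a_n|/|b_n|$ has, the resulting limit direction of $b_n-a_n$ is $(V_2-\lambda V_1)/|V_2-\lambda V_1|$, $V_2$, or $-V_1$, all in $\mathrm{span}(V_1,V_2)$, and almost smoothness forces these subsequential limits to coincide with $\tilde f_x(u)$. Second, the iteration is much shorter than you fear: allowing unequal radii $r_n/s_n\to t\in(0,\infty)$ already yields the entire open arc from $v_2$ to $-v_1$ (not through $v_1$) in one step, and a single further application with a point in that arc fills the two remaining gaps.

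Your reduction for $d\le 2$, however, is incorrect: $f\times\mathrm{id}_{\R}$ need not be almost smooth. Take $f(y)=-2\log|y|\cdot y$ near $0$ as in Example~\ref{eg}. For a direction $(v,w)\in S^d$ with $v\neq 0$, the image of $(rv,rw)$ under $f\times\mathrm{id}$ is $(f(rv),rw)$; since $|f(rv)|/r\sim 2|v|\log(1/r)\to\infty$, the normalized image direction tends to $(\tilde f_0(v/|v|),0)$ regardless of $w$, and this has no continuous extension across $v=0$. Since for $d=2$ the great-circle condition on $S^1$ is vacuous, a genuinely different argument is needed there; the paper's lattice lemma handles all $d\ge 2$ uniformly. (For $d=1$ the statement is trivial.)
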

\begin{proof}
Given a directed, ordered graph $\Ga$ and $U\subset\R^d$ an open subset, define
$$g_\Ga^U:U^{V(\Ga)}-\De_{\tn{big}}\to (U\sx U-\De)^{E(\Ga)},\qquad (x_v)_{v\in V(\Ga)}\lra\big((x_{v^\Ga_-(e)},x_{v^\Ga_+(e)})\big)_{e\in E(\Ga)};$$
then, by the construction of Fulton-MacPherson compactification, $g^U_\Ga$ extends to a map 
$$\bar g^U_\Ga:\ov{C}_{V(\Ga)}(U)\lra Bl_\De(U\sx U)^{E(\Ga)},$$
where $\ov{C}_{V(\Ga)}(U)$ is the Fulton-MacPherson configuration space of $V(\Ga)$-labeled marked points in $U$. 
Similar to the proof of Lemma \ref{1_lmm}, $\tn{image}(\bar g^U_\Ga)$ is the closure of $\tn{image}(g^U_\Ga)$ in $Bl_\De(U\sx U)^{E(\Ga)}$. Let $\pi:Bl_\De(U\sx U)^{E(\Ga)}\to (U\sx U)^{E(\Ga)}$ be the blow-down map. For a point $x\in U$, 
$\pi^{-1}\big(((x,x))_{e\in E(\Ga)}\big)=(ST_xU)^{E(\Ga)}=(S^{d-1})^{E(\Ga)}$ and, by the construction of Fulton-MacPherson compactification,  
$$\pi^{-1}\big(((x,x))_{e\in E(\Ga)}\big)\cap \tn{image}(\bar g^U_\Ga)=\ov{V}_\Ga\subset (S^{d-1})^{E(\Ga)};$$
recall $\ov{V}_\Ga$ is defined in Definition \ref{Vga_dfn}. Now plug in $U_1,U_2$ for $U$. From the definition of $g^U_\Ga$, $(f,f)^{E(\Ga)}\circ g^{U_1}_\Ga=g^{U_2}_\Ga\circ f^{V(\Ga)}$, so $\tn{image}(g^{U_2}_\Ga)=(f,f)^{E(\Ga)}(\tn{image}(g^{U_1}_\Ga))$. Passing to their closures in $Bl_\De(U_i\sx U_i)^{E(\Ga)}$, we have $\tn{image}(\bar g^{U_2}_\Ga)=\tl{f}^{E(\Ga)}(\tn{image}(\bar g^{U_1}_\Ga))$. Therefore, for each $x\in U_1$,
$$(\tl{f}_x)^{E(\Ga)}:(S^{d-1})^{E(\Ga)}=(ST_xU_1)^{E(\Ga)}\lra(ST_{f(x)}U_2)^{E(\Ga)}=(S^{d-1})^{E(\Ga)}$$
maps $\ov{V}_\Ga$ to $\ov{V}_\Ga$. The conclusion of the proposition follows from Lemma \ref{lattice_lmm} below. 
\end{proof}
\begin{lmm}\label{lattice_lmm}
Suppose $f:S^{d-1}\to S^{d-1}$ is a homeomorphism such that for any directed, ordered graph $\Ga$, $\ov{V}_\Ga$ is invariant under $f^{E(\Ga)}:=(f,\ldots,f):(S^{d-1})^{E(\Ga)}\to(S^{d-1})^{E(\Ga)}$, then $f$ is induced by a map $F\in GL(d)$. 
\end{lmm}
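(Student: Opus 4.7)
My plan is to extract the linear structure of $f$ by feeding two simple graphs into the invariance hypothesis and then invoking the Fundamental Theorem of Projective Geometry. Let $\Ga_2$ be the graph on two vertices $v_1,v_2$ with opposite edges $v_1\!\to\!v_2$ and $v_2\!\to\!v_1$, and let $\Ga_3$ be the directed triangle with edges $v_1\!\to\!v_2,\,v_2\!\to\!v_3,\,v_3\!\to\!v_1$.

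The first step is to obtain antipodal equivariance. Directly from Definition~\ref{Vga_dfn}, $\ov V_{\Ga_2}=\{(u,-u):u\in S^{d-1}\}$, so invariance of $\ov V_{\Ga_2}$ under $f\sx f$ gives $f(-u)=-f(u)$ for every $u\in S^{d-1}$. The second step, which carries the main geometric content, is to show that $f$ preserves great circles. Unwinding Definition~\ref{Vga_dfn} for $\Ga_3$,
$$V_{\Ga_3}=\big\{(u_1,u_2,u_3)\in(S^{d-1})^3:\exists\,r_1,r_2,r_3>0,\ r_1u_1+r_2u_2+r_3u_3=0\big\}.$$
I will verify that for any three distinct, pairwise non-antipodal points $u,v,w\in S^{d-1}$ lying on a common great circle, there is a sign choice $(\ep_1,\ep_2,\ep_3)\in\{\pm1\}^3$ making $(\ep_1u,\ep_2v,\ep_3w)\in V_{\Ga_3}$; this is a short planar fact, since the six unit vectors $\pm u,\pm v,\pm w$ cut the ambient $2$-plane into six sectors and exactly one of the four sign-classes (modulo global sign flip) fails to lie in a common open half-plane. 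Invariance of $\ov V_{\Ga_3}$ under $f^3$ together with the antipodal equivariance then produces a nontrivial linear relation among $f(u),f(v),f(w)$; the pairwise non-antipodal hypothesis combined with bijectivity of $f$ forces all three coefficients in this relation to be nonzero, so $f(u),f(v),f(w)$ span a $2$-plane in $\R^d$ and thus lie on a common great circle.

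Extending from triples to entire great circles is a connectedness argument, and applying the same hypothesis to $f^{-1}$ promotes inclusion to equality, so $f$ sends each great circle homeomorphically onto a great circle. Quotienting by the antipodal involution, $f$ descends to a bijective collineation $\bar f:\R P^{d-1}\to\R P^{d-1}$, and since $d\ge 3$ and $\R$ admits no nontrivial field automorphisms, the Fundamental Theorem of Projective Geometry yields $F\in GL(d)$ inducing $\bar f$. The two continuous lifts of $\bar f$ to $S^{d-1}$ are $x\mapsto\pm F(x)/|F(x)|$, both of which are induced by elements of $GL(d)$, and $f$ equals one of them by continuity.

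The main point to watch is the distinction between $V_\Ga$ and its closure $\ov V_\Ga$: the Fulton--MacPherson boundary of $\ov C_{V(\Ga)}(\R^d)$ contributes degenerate triples such as $(w,u,-u)$ (coming from the collision $p_1=p_2$) to $\ov V_{\Ga_3}$, and by themselves these encode nothing beyond the antipodal equivariance already obtained. The plan is therefore arranged so that hypotheses are fed in from the open stratum $V_\Ga$ while conclusions are only assumed in $\ov V_\Ga$, and the residual ambiguity about whether the output lands in the degenerate locus is ruled out using the first step together with the pairwise non-antipodal genericity of $u,v,w$.
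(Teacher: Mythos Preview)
Your proof is correct and takes a genuinely different route from the paper's. The paper (attributing the argument to Gundlach) feeds in the complete graphs $\Ga_n$ on the lattice cubes $L_n=\{0,\ldots,n\}^d$: placing the vertex labeled $\underline m$ at the point $\underline m\in\R^d$ gives a specific element of $V_{\Ga_n}$, and invariance forces the existence of a configuration $(x_{\underline m})$ realizing the $f$-images of all lattice directions simultaneously; a short parallelogram argument then shows $\underline m\mapsto x_{\underline m}$ is the restriction of a single linear map $F_n\in GL(d)$. As $n\to\infty$ the lattice directions become dense and continuity finishes. Your approach, by contrast, extracts everything from the two smallest graphs $\Ga_2$ and $\Ga_3$, trading the explicit lattice construction for the Fundamental Theorem of Projective Geometry. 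This is more conceptual and uses strictly less of the hypothesis, at the price of importing FTP and of genuinely needing $d\ge3$ (which the paper assumes anyway; the lattice argument happens to work for $d=2$ as well). One small simplification: the step ``applying the same hypothesis to $f^{-1}$'' is unnecessary --- once you have $f(C)\subset C'$ for great circles, the restriction $f|_C:S^1\to S^1$ is a continuous injection between $1$-spheres, hence surjective by invariance of domain, so $f(C)=C'$ directly.
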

\begin{proof}(This proof is given by Fabian Gundlach.) 
The strategy is to define an increasing sequence of subsets $\{A_n\subset S^{d-1}\}_{n=1}^\i$, $A_n\subset A_{n+1}$, such that $\cup_nA_n$ is dense in $S^{d-1}$, and show that for each $n$, $f|_{A_n}$ is induced by a map $F_n\in GL(d)$. 
For each $n\in\Z^{>0}$, let $\Ga_n$ be the complete graph with $(n+1)^d$ vertices labeled by elements in $L_n:=\{0,\ldots,n\}^d$. 
Then, putting the vertex of $\Ga_n$ labeled by $(m_1,\ldots,m_d)\in L_n$ at $(m_1,\ldots,m_d)\in\R^d$ gives an element in $C^\quo_{V(\Ga_n)}(\R^d)$. 
Since $\ov{V}_{\Ga_n}$ is invariant under $f^{E(\Ga_n)}$, there is an element $x=(x_{\underline{m}}\in\R^d)_{\underline{m}\in L_n}\in(\R^d)^{V(\Ga_n)}$ such that for any $\underline{m}_1\neq\underline{m}_2\in L_n$, $x_{\underline m_1}\neq x_{\underline m_2}$ and 
\begin{equation}\label{latticegraphmap_eqn}
\frac{x_{\underline m_1}-x_{\underline m_2}}{|x_{\underline m_1}-x_{\underline m_2}|}=f\Big(\frac{\underline m_1-\underline m_2}{|\underline m_1-\underline m_2|}\Big).
\end{equation}
Denote $e_j=(0,\ldots,0,1,0\ldots,0)\in L_n$ where $1$ is at the $j$-th place. We also view $e_j$ as an element in $S^{d-1}$. For all $\underline{m}\in L_n$ and $j\in\{1,\ldots,d\}$, $x_{\underline{m}+e_j}-x_{\underline{m}}$ has direction $f({e_j})$ by \eref{latticegraphmap_eqn}. We next show that $|x_{\underline{m}+e_j}-x_{\underline m}|$ does not depend on $\underline{m}$ either. For $k\neq j$, since $x_{\underline{m}\pm e_k}-x_{\um}$ is parallel to $x_{\um+e_j\pm e_k}-x_{\um+e_j}$, the points $x_{\um},x_{\um+e_j},x_{\um\pm e_k},x_{\um+e_j\pm e_k}$ form a parallelogram, so we must have $x_{\um\pm e_k+e_j}-x_{\um\pm e_k}=x_{\um+e_j}-x_{\um}$. Therefore, plugging in $k$ for $j$, we have $x_{\um+e_k}-x_{\um}=x_{\um-e_j+e_k}-x_{\um-e_j}$ as well. So, in the two triangles $(x_\um,x_{\um+e_j},x_{\um+e_k})$ and $(x_{\um-e_j},x_{\um},x_{\um-e_j+e_k})$, one of the pairs of corresponding edges are equal as vectors. The other two pairs of corresponding edges are both parallel by \eref{latticegraphmap_eqn}, so they must both be equal. This shows $x_{\um+e_j}-x_\um=x_\um-x_{\um-e_j}$. Therefore, $x_{\um+e_j}-x_\um$ does not depend on the choice of $\um$. Without loss of generality we can assume $x_{(0,\ldots,0)}=(0,\ldots,0)\in\R^d$. Then $x_{\um+e_j}-x_\um=x_{e_j}$ for all $\um,j$. So, $x_{\um_1+\um_2}=x_{\um_1}+x_{\um_2}$ for all $\um_1,\um_2$. This shows that the map $F_n\in GL(d)$ defined by ``$\forall i\ F_n(e_i)=x_{e_i}$'' maps $\um$ to $x_\um$ for all $\um\in L_n$. 
For $F\in GL(d)$, denote by $\hat F:S^{d-1}\to S^{d-1}$ the homeomorphism induced by $F$. 
Now define $$A_n=\Big\{\frac{\um_1-\um_2}{|\um_1-\um_2|}\Big\}_{\um_1\neq\um_2\in L_n}\subset S^{d-1},$$
then $f|_{A_n}=\hat F_n|_{A_n}$. On the other hand, the condition $\hat F_{A_n}=f|_{A_n}$ uniquely determines $F$ up to scaling: since $\{e_i\}_{i=1}^d\subset A_n$, there exists $(0\neq\la_i\in\R)_{i=1}^d$ such that $F_n(e_i)=\la_if(e_i)$; since $e_i+e_j\in A_n$, the direction of $F(e_i+e_j)=\la_if(e_i)+\la_jf(e_j)$ is determined by $f$, so $\la_i/\la_j$ is determined. Therefore, for different $n$, $F_n$ differ only by scaling. This determines a map $F\in GL(d)$ up to scaling, satisfying $\hat F|_{\cup_nA_n}=f|_{\cup_nA_n}$. Since $\cup_nA_n$ is dense in $S^{d-1}$, $\hat F=f$. 
\end{proof}

\texttt{xujiachen@math.harvard.edu}

\end{document}